\newcommand{\Szego}{\szego}
\newcommand{\kk}{\left( \frac{k}{2\pi} \right)}
\newcommand{\h}{\hat} 
\newcommand{\la}{\langle}
\newcommand{\ra}{\rangle}
\newcommand{\bex}{\begin{example}}
\newcommand{\eex}{\end{example}}
\newcommand{\be}{\begin{equation} }
\newcommand{\ee}{\end{equation} }
\newcommand{\bcs}{\begin{cases}}
\newcommand{\ecs}{\end{cases}}
\newcommand{\Kahler}{K\"ahler }
\newcommand{\brem}{\begin{rem}}
\newcommand{\erem}{\end{rem}}
\newcommand{\edit}[1]{{\color{red}{$\clubsuit$#1$\clubsuit$}}}
\newcommand{\bma}{\begin{pmatrix}}
\newcommand{\ema}{\end{pmatrix}}
\def\name{Z-Z}
\newcommand{\pa}{\partial}
\newcommand{\baa}{\begin{align*}}
\newcommand{\eaa}{\end{align*}}
\newcommand{\bea}{\begin{eqnarray*} }
\newcommand{\eea}{\end{eqnarray*} }
\newcommand{\bee}{\begin{eqnarray} }
	\newcommand{\eee}{\end{eqnarray} }
\newcommand{\beq}{\begin{equation} }
\newcommand{\eeq}{\end{equation} }
\newcommand{\bpp}{\begin{prop}}
\newcommand{\epp}{\end{prop}}
\newcommand{\bt}{\begin{theorem}}
\newcommand{\et}{\end{theorem}}
\newcommand{\bpf}{\begin{proof}}
\newcommand{\epf}{\end{proof}}
\newcommand{\bl}{\begin{lem}}
\newcommand{\el}{\end{lem}}
\newcommand{\bc}{\begin{cor}}
\newcommand{\ec}{\end{cor}}
\newcommand{\bd}{\begin{defin}}
\newcommand{\ed}{\end{defin}}
\newcommand{\RA}{\Rightarrow}
\newcommand{\ot}{\otimes}
\newcommand{\hPi}{\hat{\Pi}}
\def\XXint#1#2#3{{\setbox0=\hbox{$#1{#2#3}{\int}$ }
\vcenter{\hbox{$#2#3$ }}\kern-.6\wd0}}
\newcommand{\wb}{\overline}
\newcommand{\acal}{\mathcal{A}}
\newcommand{\fcal}{\mathcal{F}}
\newcommand{\gcal}{\mathcal{G}}
\newcommand{\hcal}{\mathcal{H}}
\newcommand{\kcal}{\mathcal{K}}
\newcommand{\lcal}{\mathcal{L}}
\newcommand{\pcal}{\mathcal{P}}
\newcommand{\scal}{\mathcal{S}}
\newcommand{\tcal}{\mathcal{T}}
\newcommand{\wcal}{\mathcal{W}}
\newcommand{\jcal}{\mathcal{J}}
\newcommand{\Hb}{{\mathbb H}}
\def    \half   {{\frac{1}{2}}}
\def    \Z  {{\mathbb Z}}
\def    \R  {{\mathbb R}}
\def    \C  {{\mathbb C}}
 \def   \half   {{\frac{1}{2}}}
 \def    \Im     {{\operatorname{Im}}}
 \def    \Re     {{\operatorname{Re}}}
 \DeclareMathOperator{\Vol}{Vol}
\newcommand{\Sj}{Sj\"ostrand }
\newcommand{\szego}{Szeg\"o }
\newcommand{\kahler}{K\"ahler }
\newcommand{\wt}{\widetilde}
\newcommand{\dbar}{\bar\partial}
\newcommand{\ddbar}{\partial\dbar}
\renewcommand{\H}{{\mathbf H}}
\renewcommand{\phi}{\varphi}
\newtheorem{theo}{{\sc Theorem}}[section]
\newtheorem{cor}[theo]{{\sc Corollary}}
\newtheorem{defin}[theo]{{\sc Definition}}
\newtheorem{lem}[theo]{{\sc Lemma}}
\newtheorem{prop}[theo]{{\sc Proposition}}
\newenvironment{example}{\medskip\noindent{\it Example:\/} }{\medskip}
\newenvironment{rem}{\medskip\noindent{\it Remark:\/} }{\medskip}
\title{Pointwise Weyl law for Partial  Bergman kernels }
\author{Steve Zelditch and Peng Zhou}
\address{Department of Mathematics, Northwestern  University, Evanston, IL 60208, USA}
\email{zelditch@math.northwestern.edu}
\thanks{Research partially supported by NSF grant and DMS-1541126
and by the Stefan Bergman trust  .}
\begin{document}

\begin{abstract} This article is a continuation of a series by the authors on partial Bergman kernels and their asymptotic expasions. We prove a 2-term pointwise Weyl law for semi-classical spectral projections onto sums of eigenspaces of  spectral width $\hbar=k^{-1}$   of Toeplitz quantizations  $\hat{H}_k$ of Hamiltonians   on powers $L^k$ of a positive Hermitian holomorphic  line bundle $L \to M$ over a \kahler manifold. The first result is a complete asymptotic expansion for smoothed spectral projections in terms of periodic orbit data. When the orbit is `strongly hyperbolic' the leading coefficient defines a uniformly continuous measure on $\R$ and a semi-classical Tauberian theorem implies the 2-term expansion. As in previous works in the series, we use scaling asymptotics of the  Boutet-de-Monvel-Sjostrand parametrix and
Taylor expansions to reduce the proof to the Bargmann-Fock case. 

\end{abstract}

\maketitle

This article is part of a series \cite{ZZ16, ZZ17} devoted to partial Bergman kernels
on polarized   (mainly compact) \kahler manifolds $(L, h) \to (M^m, \omega, J)$, i.e. \kahler manifolds of (complex) dimension $m$
equipped with a Hermitian holomorphic line bundle  whose curvature form is $\omega_h = \omega$. Partial Bergman
kernels \begin{equation} \label{PBK} \Pi_{k, <E} : H^0(M, L^k)
\to \hcal_{k, <E} \end{equation}
are orthogonal projections onto proper subspaces $\hcal_{k, <E} \subset H^0(M, L^k)$ of the space
of  holomorphic sections of $L^k$. 
Let    $H \in C^{\infty}(M, \R)$ denote a classical Hamiltonian,  let $\xi = \xi_H$ denote  the Hamilton vector field of $H$, let $\nabla$ be the Chern connection. 
The quantization of $H$ is the Toeplitz Hamiltonian 
\begin{equation} \label{TOEP}  \hat{H}_k:= \Pi_{h^k} (\frac{i}{k} \nabla_{\xi} +   H) \Pi_{h^k}: 
H^0(M, L^k) \to H^0(M, L^k).\end{equation}
Here, 
$\Pi_{h^k}: L^2(M, L^k) \to H^0(M, L^k)$ is the orthogonal (\szego
or Bergman) projection.     Let
 $\{\mu_{k,j} \}_{j =1}^{d_k}$ denote the eigenvalues of $\hat{H}_k$ on the $d_k$-dimensional space $H^0(M, L^k)$ and denote the eigenspaces by
\begin{equation} \label{EIGSP} V_k(\mu_{k,j}): = \{s \in H^0(M, L^k) : 
\hat{H}_k s = \mu_{k, j} s\}. \end{equation}  Also, denote the
 eigenspace projections by
\begin{equation} \label{Pikj} \Pi_{k,j}: =  \Pi_{\mu_{k,j}} :H^0(M, L^k) \to V_k(\mu_{k,j}). \end{equation}
Then the partial Bergman kernels \eqref{PBK} are the projections onto the  spectral subspaces \begin{equation} \label{HEintro}  \hcal_{k, <E}: = \{\hat{H}_k < E\}: = \{s \in H^0(M, L^k): \langle \hat{H}_k s, s \rangle < E \langle s, s \rangle \}\end{equation} of \eqref{TOEP}.

In this article, we study the pointwise  semi-classical Weyl asymptotics of $\Pi_{k, <E}(z)$ \eqref{PBK} in 
the   conventional semi-classical scaling by $h = \frac{1}{k}$. The main results give asymptotics for the scaled pointwise Weyl sums,
\begin{equation}\label{DOSf} 
	\Pi^{E}_{k, f}(z)
	= \sum_{j} f(k(\mu_{k,j} - E)) \Pi_{k,j}(z,z)
\end{equation}
for various types of test functions $f$. Equivalently, we  
consider a  sequence of measures on $\R$,
\begin{equation} \label{mukzdef} 
d\mu_{k}^{z,1,E}(\lambda) 
=  \sum_{j}
\Pi_{k,j}(z) \delta_{k(\mu_{k,j - E)}}(\lambda).
\end{equation}
then 
$ 	\Pi^{E}_{k, f}(z) = \int_{\R} f(\lambda) d\mu_{k}^{z,1,E}(\lambda).$
When  $f \in \scal(\R)$ with $\h f \in C^\infty_c(\R)$, Theorem \ref{PikfTH} gives a complete asymptotic expansion. 
 When $f = {\bf 1}_{[a, b]}$ (the indicator function) one has sharp Weyl sums, and  Theorem \ref{2TERM} gives a  pointwise Weyl formula with 2 term asymptotics.
 
 The $\frac{1}{k}$ scaling originates in the Gutzwiller trace formula and  has been studied in numerous articles in diverse settings.
Two-term pointwise Weyl laws is a standard  topic in spectral asymptotics.  The pointwise asymptotics in the \kahler setting are quite analogous to Safarov's asymptotic results for spectral projections of
the Laplacian of a compact Riemannian manifold \cite{Sa,SV} and we use Safarov's notations to emphasize the similarity. 
For general \kahler manifolds, integrated Weyl laws and  dual  Gutzwiller trace expansions  were studied in \cite{Z97} using the Toeplitz calculus of \cite{BG81}.  Pointwise Weyl laws of the type studied in this article are given in  Borthwich-Paul-Uribe \cite{BPU98}, based on the Boutet-de-Monvel-Guillemin Hermite Toeplitz calculus \cite{BG81}

The main purpose of this paper is to prove pointwise Weyl asymptotics using the techniques developed in \cite{ZZ16,ZZ17}.
Existence of an asymptotic expansion for smoothed Weyl sums is a straightforward consequence of a parametrix construction and  of the method of statonary phase, replacing the elaborate symplectic spinor symbol calculus   of \cite{BG81}. However, the coefficients are complicated to compute. In the Toeplitz theory of \cite{BG81, BPU98} they
   are calculated using the symplectic spinor symbol calculus of Toeplitz operators, while we use scaling asymptotics of the quantized flow in the sense of \cite{ShZ02,RZ,ZZ17}. It is  shown that the leading coefficients  depend only on the quadratic part of the Taylor expansions. Hence, the coefficients are the same as in the linear model of \cite{Dau80} once the flow is linearized at a period.  Our approach gives a somewhat simpler formula for the leading term than in \cite{BPU98} and it is not completely obvious that the formulae agree; in Section \ref{BPUSECT} we show that the formulae do agree with those of \cite{BPU98}.
  Related calculations using the scaling  approach of  this article  are also given  in articles of Paoletti  \cite{P12,P14}.

 In the previous articles, we studied the scaling asymptotics of   $\Pi_{k,<E}(z): = \Pi_{k,<E}(z,z) $  in a $\frac{1}{\sqrt{k}}$-tube around  the interface $\partial \acal$ between the allowed and forbidden regions,
\begin{equation} \label{acalDef} \acal: = \{z: H(z) < E\},
\quad \fcal = \{z: H(z) >E\}.
\end{equation} 
This $\frac{1}{\sqrt{k}}$ scaling was the new feature of the Weyl asymptotics of \cite{ZZ17} and is reminiscent of the scaling of the central limit theorem.  The  $\frac{1}{k}$-scaling was also studied in  \cite{ZZ17}, but it  was sufficient for the purposes of that article to obtain the  crude asymptotics corresponding to the singularitiy of the Fourier transform
$\widehat{d \mu_k^{z, 1, E}}(t)$ at $t = 0$.  Technically speaking, the main difference with respect to \cite{ZZ17} is that the asymptotics of the $\frac{1}{\sqrt{k}}$ scaling only involve `Heisenberg translations' while those of $d\mu_{k}^{z,1,E}$ involve the metaplectic representation.  Although the notation and approach of this article have considerable overlap with \cite{ZZ17} we give a rather detailed exposition for the sake of completeness.

   \subsection{Statement of results}

To state the results, we need some further notation. Given a Hermitian metric $h$ on $L$, we denote by $X_h = \partial D_h^* \subset L^*$ the unit $S^1$ bundle $\pi: X_h \to M$ over $M$ defined as the boundary of the unit co-disc bundle in the
dual line bundle $L^* $ to $L$. As reviewed in Section \ref{CRSECT}, $X_h$ is a strictly pseudo-convex CR-manifold, and we denote the CR sub-bundle by $HX \subset TX_h$. As reviewed in Section \ref{LIFT},
the  Hamilton flow  $g^t: M \to M$  lifts to   a contact flow $\hat{g}^t: X_h \to X_h$  (Lemma \ref{gtform}) with respect to the contact structure $\alpha$ associated to the \kahler potential of $\omega$.  Then $HX = \ker \alpha$ and therefore $D \hat{g}^t: HX \to HX$.
Moreover, $HX$ inherits a complex structure $J$ from that of $M$
under the identification $\pi_*:  H_{x}X \to T_{\pi(x)} M$, for all $x \in X$. Its complexification has a splitting $H_xX_\C = H_xX\otimes \C = H_{x}^{1,0}X \oplus H_{x}^{0,1}X$ into subspaces of types $(1,0)$ resp. $(0,1)$. In the generic case where  $\hat{g}^t$ is non-holomorphic, it does not preserve this splitting.

At each  point $x \in X$, the complexified CR subspace  $HX_{\C}$ equipped with $J_{x}$ together with the Hermitian metric $h_{x}$ determines an {\it osculating  Bargmann-Fock space} $\hcal_{J_x}$ (see  Section \ref{CRSECT} and Section \ref{OBFSECT}  for background).      Thus,  $\hcal_{J_{x}}$ 
 is the space  of entire holomorphic functions on 
$H_{x}^{1,0}X $ which are square integrable with respect to the {\it ground state} $\Omega_{J_{x}}$ (defined in  \eqref{GSJ}).  Symplectic transformations $T: H_{x} X \to H_{x} X$ resp. $T: T_z M \to T_z M$  may be quantized by the metaplectic representation 
as complex linear symplectic maps (see  \eqref{eta} and Section \ref{METASECT}) on the osculating Bargmann-Fock space, 
\begin{equation} \label{WJ}  W_{J_x}(T): \hcal_{J_x}  \to \hcal_{J_x}. \end{equation}

The asymptotics of $\mu^{z, 1, E}_{k}(f)$  depend on whether or not $z \in M$ is a periodic point for $g^t$.
\begin{defin} Define periodic points of $g^t$,
as follows:  $$\pcal_E := \{z \in H^{-1}(E): \exists T > 0: g^T z
= z\}.$$ 
For $z \in \pcal_E$, let $T_z$ denote the minimal period
$T > 0$ of $z$.
\end{defin} 

It may occur that $z \in \pcal_{E}$ but the orbit $g_h^t(x)$ with $\pi(x) = z$ is not periodic, where $g_h^t$ is the flow generated by the horizontal lift $\xi_H^h$ of the Hamiltonian vector field $\xi_H$. This is due to holonomy effects: parallel translation of sections of $L^k$ around the closed curve $t \mapsto g^t(z)$ may have non-trivial holonomy. We denote the holonomy by \begin{equation} 
\label{HOLONOMYDEF} e^{i n\theta_z^h}: = {\rm the\; unique\; element\;} e^{i \theta} \in S^1: g_h^{n T_z}  x =r_{\theta} x. \end{equation} 

  Let $z \in \pcal_E$, $T= n T_z$ be a period for $n \in \Z$.  
  Then $D g^{T} _{z}$ induces
 linear symplectic map 
\begin{equation} \label{DGnMa} S:= D g^{T} _{z}: T_z M \to T_z M, \end{equation} 
When working in the \kahler context it is better to conjugate to the complexifications,
$$
 T_z M \otimes \C = T^{1,0}_z M \oplus 
T^{0,1}_z M. $$
 We denote the projection to the `holomorphic component' by
\begin{equation} \label{pi10} \pi^{1,0} : T_z M \otimes \C \to T^{1,0} M. \end{equation}
The spaces $T^{1,0}M, T^{0,1}M$ are paired complex Lagrangian subspaces.

  Relative to a  symplectic basis $\{e_j, J e_k\}$   of $T_z M$ in which $J$ assumes the standard form $J_0$,   the matrix of $D g^{n T_z}$ has the form,
\be  \label{ABCD} D g^{n T_z}_{z} := S^n: =  \begin{pmatrix} A_n & B_n\\ & \\ C_n & D_n \end{pmatrix} \in Sp(m, \R). \ee
If we conjugate to the complexifcation 
$T_z M \otimes \C$ by the natural map $\wcal$  defined in \eqref{PQDEF}, then \eqref{ABCD} conjugates to $$ \bma P_n & Q_n \\ \bar Q_n & \bar P_n \ema \in Sp_c(m). $$ The holomorphic block
\begin{equation} \label{PDEF} P_n =  \left(A_n + D_n+ i (-B_n + C_n) \right) =  \pi^{1,0} \;\wcal S^n \wcal^{-1} \;\pi^{1,0}: 
T^{1,0}_z M \to T^{1,0}_z M\end{equation}
plays a particularly important role.

The symplectic map \eqref{DGnMa} is quantized by the metaplectic representation $W_{J_z}$ \eqref{WJ}  (see Section \ref{METASECT}) on the  osculating Bargmann-Fock space $\hcal_{J_z}$ of square integrable holomorphic functions on $T^{1,0}_z M$, that is,
 the metaplectic representation  defines a unitary operator \begin{equation} \label{WJgM}  W_{J_{z}} \;(D g^{n T_z}_{z}): \hcal_{J_z}(T_z^{1,0} M) \to \hcal_{J_z}(T_z^{1,0} M). \end{equation}

The  two-term Weyl law is stated in terms of certain   data associated
to $D g^{nT_z}$  and $W_{J_z}(D g^{nT_z})$ \eqref{WJgM}.  First, we let $\wcal \xi_H$ be the image of the Hamilton vector field  $\xi_H$ in $T_z M \otimes \C$. Let $\alpha = \pi^{1,0} \wcal \xi_H$, let $\bar{\alpha} \in \pi^{0,1} \wcal \xi_H$,
and let $P_n$ be as in \eqref{PDEF}. Set,
\begin{equation} \label{gcalndef} \gcal_n(z): = (\det P_n)^{-\half} \cdot (\bar{\alpha} \cdot  P_n^{-1} \alpha) ^{-\half}. \end{equation}
The factor $(\det P)^{-\half}$ has an interpretation, 
\begin{equation} \label{ABCDintro} (\det P_n)^{-1/2} =   \langle W_{J_{x}} \;(D g^{n T(x)}_{x}) \;\Omega_{J_z}, \Omega_{J_z} \rangle \end{equation}
as the matrix element 
of  \eqref{WJgM}
relative to the ground state $\Omega_{J_{z}}$ in $\hcal_{J_{z}}$. This relation  is essentially proved by Bargmann and by Daubechies \cite{Dau80}. 
It can be proved by  comparing the Bargmann-Fock  metaplectic representation of Section \ref{METASECT} with Daubechies' Toeplitz construction of metaplectic representation 
in Section \ref{TOEPMETA}.  Daubechies did not explicitly use the conjugation $\wcal$ to the complexification, and therefore did not record the identity \eqref{ABCDintro}.

Also let $e^{i n\theta_x^h}$ denote the holonomy of the horizontal lift of the orbit $t \to g^{t}(z)$  at $t = n T_z$. 
We  define the function  $Q^E_{z, k}(s)$ by:

\begin{defin} \label{QDEF}  \begin{equation} \label{Q} 
 Q^E_{z, k}(s) = \left\{\begin{array}{ll}\gcal_0(z) 
 & 
  z \; \notin \pcal_E \\ & \\  \sum_{n \in \Z} (2\pi)^{-1} e^{-i n T_z s}e^{-in k \theta_z^h} 
\;\gcal_n(z)  &

 \; z \in \pcal_{E}.\end{array}
\right. \end{equation}

\end{defin}

\begin{defin} For $z \in \pcal_E$, define the distributions  $d\nu_k^z$ on $f \in \scal(\R)$ by
 $$\int_{\R} f(\lambda) d\nu_k^z(\lambda) =
\sum_{n \in \Z}   \hat{f} (n T_z)
\;\gcal_n(z) e^{-i nk \theta_{z}^h} = \int_{\R} f(s)  Q^E_{z,k}(s) ds$$
\end{defin}

The nature of $Q_{z, k}(s) $ and $\nu_k^z$ depends on the type of periodic orbit
of $z \in \pcal_E$. 
 In this article we confine ourselves to the case where the orbit of $z$ is
 `real positive definite symmetric' in the following sense:
\begin{defin} \label{HYPERBOLIC} Let $z \in \pcal_E$, with $T_z = T$,   and let $(T_z M, J_z, \omega_z)$ be the tangent space equipped with its complex structure and symplectic structure. Let $\{e_j, f_k\}_{j, k =1}^m$ be a symplectic basis of $T_z M$ in which
$J = J_0$ and $\omega = \omega_0$ take the standard forms. We
say that $D G^T_z$ is {\it positive definite symmetric symplectic} if its matrix $S \in Sp(m, \R)$ in 
the basis $\{e_j, f_k\}_{j, k = 1}^m$ is a symmetric positive definite symplectic matrix. 

\end{defin} 
Positive definite symplectic matrices are discussed in Sections \ref{SLA}- \ref{PDSS} and in Section \ref{MEPDSS}. They are diagonalizable by orthogonal matrices in $O(2n)$
and  by unitary matrices in $U(n)$. In invariant terms, $O(2n)$ is the
orthogonal group of $(T_z M, g_{J_z})$ where $g_{J_z}(X, Y) = \omega_z(X, J_z Y)$. Unitary matrices commute with $J_z$. The eigenvalues of $DG_z^T$ are real and to come in inverse pairs. The eigenvalue $1$ corresponds to the Hamilton vector field $\xi_H$ of $H$ and there is a second eigenvector of eigenvalue $1$ coming from the fact that  periodic orbits come in 1-parameter families  (symplectic cylinders) as the energy level $E$ is varied (see \cite{AM78}). The  eigenvalues in the symplectic orthogonal complement of the eigenspace $V(1)$ of eigenvalue $1$ come
in unequal real inverse pairs  $\lambda, \lambda^{-1} $.
 For expository simplicity, we omit the case where eigenvalues are complex of modulus $\not= 1$ and arise in $4$-tuples 
$\lambda, \lambda^{-1}, \bar{\lambda}, \bar{\lambda}^{-1}$ (sometimes
called loxodromic). We do discuss the elliptic  case where $S \in U(n)$, and thus all of  the eigenvalues have modulus $1$ and come in complex conjugate pairs.

We refer to \cite{deG} for background on positive definite
symmetric symplectic matrices and to \cite{L02} for  types of periodic orbits of  Hamilltonian flows.

\bd\label{hyperhypo}
We say that $z$ satisfies the  {\em strong hyperbolicity hypothesis} 
if  $D g_z^T: (T_z M,J_z) \to (T_z M,J_z)$ is a positive symplectic map, with a 2-dimensional symplectic eigenspace $V(1)$  for the eigenvalue $1$. 
\ed

The main motivation for this hypothesis is that we can explicitly compute
\eqref{gcalndef} in this case (see Proposition \ref{PROPME}). Almost the same computation works if $Dg^T_z$ is unitary (the elliptic case) However,
in the strong hyperbolic case, we can prove that the infinite series
defining \eqref{Q} converges absolutely and uniformly, and therefore:

\begin{prop} \label{HYPLEM} If $z$ satisfies the strong hyperbolicity
hypothesis, 
then $\nu_k^z$ is an absolutely  continuous measure. \end{prop}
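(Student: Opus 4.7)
The strategy is to show that $|\gcal_n(z)|$ decays exponentially as $|n| \to \infty$ under the strong hyperbolicity hypothesis. Granted such a bound, the Fourier series
$$Q^E_{z,k}(s) = \frac{1}{2\pi}\sum_{n \in \Z} e^{-inT_z s}\, e^{-ink\theta_z^h}\, \gcal_n(z)$$
converges absolutely and uniformly in $s$ to a bounded continuous function; by the defining relation of $\nu_k^z$, this function is then the Lebesgue density of $\nu_k^z$, yielding absolute continuity.

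First I would invoke Definition \ref{HYPERBOLIC} together with the spectral theorem for symmetric positive definite symplectic matrices (see \cite{deG}) to decompose $T_z M$ as a symplectic direct sum $V(1) \oplus V(1)^{\perp_\omega}$, where $V(1)$ is the two-dimensional eigenspace of $S := Dg_z^{T_z}$ for the eigenvalue $1$ (which contains $\xi_H$), and $S$ restricts to a strictly hyperbolic symmetric positive symplectic map on $V(1)^{\perp_\omega}$ with paired real eigenvalues $e^{\pm t_j}$, $t_j > 0$, $j = 1, \dots, m - 1$. Next I would analyze the two factors of $\gcal_n(z) = (\det P_n)^{-1/2}(\bar\alpha \cdot P_n^{-1}\alpha)^{-1/2}$ separately. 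For the first factor, I would use \eqref{ABCDintro} to identify $(\det P_n)^{-1/2}$ with the vacuum matrix element $\langle W_{J_z}(S^n)\Omega_{J_z}, \Omega_{J_z}\rangle$ of the metaplectic representation. The Bargmann--Daubechies formula for the vacuum matrix element of a symmetric positive symplectic ``squeeze'' (whose scalar case is $\langle W(e^{tX})\Omega,\Omega\rangle = 1/\sqrt{\cosh t}$) yields, on $V(1)^{\perp_\omega}$, the product $\prod_j (\cosh(n t_j))^{-1/2} = O(e^{-c|n|})$ with $c = \tfrac{1}{2}\sum_j t_j > 0$; the $V(1)$ block, on which $S^n$ is the identity, contributes a factor of unit modulus. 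For the second factor, since $\xi_H \in V(1)$, the vector $\alpha = \pi^{1,0}\wcal \xi_H$ lies in the $(1,0)$-image of $V(1)\otimes\C$, on which $P_n$ is essentially constant in $n$; hence $(\bar\alpha P_n^{-1}\alpha)^{-1/2}$ is bounded uniformly in $n$. Combining the two bounds gives $|\gcal_n(z)| \le C\, e^{-c|n|}$, and the Fourier series for $Q^E_{z,k}$ converges absolutely.

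The main obstacle is the matrix-element computation in the generic case where $J_z$ does not preserve the symplectic splitting $V(1) \oplus V(1)^{\perp_\omega}$, since a symmetric symplectic matrix need not commute with $J$. One must check that the off-diagonal blocks of $P_n$ coupling $V(1)$ and $V(1)^{\perp_\omega}$ neither destroy the exponential growth of $|\det P_n|$ nor cause $\bar\alpha P_n^{-1}\alpha$ to degenerate. This can be handled either by working in a symplectic basis simultaneously adapted to $J$ and to the $S$-eigenspace decomposition up to bounded perturbation, or by the more robust observation that $S^n$ dilates $V(1)^{\perp_\omega}$ by $e^{|n|t_j}$ while acting isometrically on $V(1)$, so the dominant exponential contribution to $\det P_n$ — and therefore the exponential decay of $\gcal_n(z)$ — survives any non-trivial coupling induced by $J_z$.
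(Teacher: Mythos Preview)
Your overall strategy---prove $\sum_n |\gcal_n(z)| < \infty$ via exponential decay, then conclude that the Fourier series defines a bounded continuous density for $\nu_k^z$---is exactly the paper's (Proposition \ref{Gconv} followed by the short weak-$*$ argument). The difference lies in how the two factors of $\gcal_n(z)$ are controlled, and here the paper exploits a short algebraic identity that you do not use and that dissolves your ``main obstacle'' at a stroke.

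For a \emph{symmetric} symplectic matrix $S$ one has $SJ = JS^{-1}$, hence $JSJ = -S^{-1}$, and a two-line computation (the lemma preceding Proposition \ref{PROPME}) gives
\[
P_J\, S\, P_J \;=\; \tfrac{1}{2}\,P_J(S+S^{-1}) \;=\; \tfrac{1}{2}\,(S+S^{-1})\,P_J.
\]
Since $S+S^{-1}$ commutes with $J$ (and hence with $P_J$), the eigenvectors $e_j$ of $S$ with eigenvalues $e^{\lambda_j}$ produce eigenvectors $P_J e_j$ of $P_J S^n P_J$ with eigenvalues $\cosh(n\lambda_j)$. This yields \emph{exactly} $\det P_n = \prod_{j=1}^m \cosh(n\lambda_j)$, and since $S\xi_H=\xi_H$ forces $(S^n+S^{-n})\xi_H = 2\xi_H$, also $P_n^{-1}\alpha = \alpha$ \emph{exactly}, so $\bar\alpha\, P_n^{-1}\alpha = |\alpha|^2$ is independent of $n$. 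No decomposition $V(1)\oplus V(1)^{\perp_\omega}$ is invoked and no compatibility of $J_z$ with that splitting is needed: the identity above already absorbs all interaction between $J$ and $S$.

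Your route through \eqref{ABCDintro} and the Bargmann--Daubechies squeeze formula does recover the same $\prod_j(\cosh n t_j)^{-1/2}$ for the first factor, but your treatment of the second factor (``$P_n$ is essentially constant on the $(1,0)$-image of $V(1)$'') and the closing paragraph are not yet proofs: you still owe a quantitative statement that $\bar\alpha P_n^{-1}\alpha$ neither blows up nor tends to zero in the presence of the $J$-induced off-diagonal coupling you flag. The paper's identity shows there is no such coupling once one passes to $S+S^{-1}$, and delivers the constant $|\alpha|^2$ on the nose rather than merely a uniform bound.
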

 
The main result is a   sharp 2-term Weyl law in this case: 

\begin{theo} \label{2TERM} Assume that $z \in H^{-1}(E)$ and that $z$
satisfies the strong hyperbolicity hypothesis. Then,
$$ \int_a ^b d\mu_{k}^{z,1,E}= \left\{ \begin{array}{ll}  \kk^{m-1/2} \gcal_0(z) (b-a)  (1+ o(1)))., &z \in H^{-1}(E), \; z \notin \pcal_E\\ &\\   \kk^{m-1/2}  \nu_k^z(a,b)(1+o(1)), & z \in H^{-1}(E),  z\in \pcal_E, 
\end{array} \right..$$

\end{theo}
 Theorem \ref{2TERM}  is a \kahler Toeplitz analogue of \cite[Theorem 1.8.14]{SV} (originally proved in \cite{Sa}).
The difference between $z \notin \pcal_E$ and  $z \in \pcal_E$ is that in the former case,   there is a contribution only from the $t = 0$ times of $g^t$ (the identity map) and in the latter case there are contributions from all iterates
of $g^{T_z}$.

It may be expected that Theorem \ref{2TERM} extends in some suitable way
to any type of periodic orbit. In the somewhat analogous Riemannian setting
studied in \cite{SV}, the pointwise Weyl
law involves first return maps on the set of  geodesic loop directions $\xi \in S^*_x M$ at a point $x \in M$ rather than closed orbits.  In some cases (such as where $x$ is a focus of an ellipsoid), the corresponding measures or $Q$-functions are calculated in \cite[Example 1.8.20]{SV}. Otherwise, the authors say simply that it is difficult to determine when the ``$Q$'' function of \cite[(1.8.11)]{SV} is uniformly continuous. It is likely that Theorem \ref{2TERM} can be extended to any orbit for which none of the eigenvalues
on the symplectic orthogonal complement of the $V(1)$-eigenspace of
$S$ have modulus one. This is certainly the case, by the same proof as in Proposition \ref{HYPLEM}, if $S$ is diagonalizable by a unitary matrix.

\subsection{Outline of the proof}

The proof is a continuation of that in  \cite{ZZ17}, adding information on the remainder term and its relation to periodic orbits of periods $T > 0$.
 Given a function $f \in \scal(\R)$ 
(Schwartz space) one defines \begin{equation} \label{fofHFT}
f(k \hat{H}_k) = \int_{\R} \hat{f}(\tau) e^{i k \tau
\hat{H}_k } d\tau =  \int_{\R} \hat{f}(t) U_k(t) dt,
\end{equation} 
where  \begin{equation} \label{UkDEF} U_k(t) = \exp i t k \hat{H}_k. \end{equation} is the unitary group on $H^0(M, L^k)$ generated by $k \hat{H}_k$. Note that  $f(k \hat{H}_k)$ is the operator  on $H^0(M, L^k)$ with the same eigensections
as $\hat{H}_k$ and with eigenvalues $f(k\mu_{k,j})$. The metric contraction of the Schwarz kernel on the diagonal is given by,
\begin{equation} \label{fofHFTintro} \Pi_{k,f}^E(z) = 
\int_{\R} \hat{f}(t)  e^{- i k t E} e^{i k t
\hat{H}_k }(z,z) dt =  \int_{\R} \hat{f}(t) e^{- i k t E}  U_k(t,z) dt.
\end{equation} 
Here, and henceforth, the metric contraction of a kernel
$K_k(z,w)$ is denoted by $K(z)$. \begin{defin}\label{CONTDEF} The metric contraction of a kernel  $M_k(z,w) : =  \sum_{j=1}^{d_k} \mu_{k,j} s_{k,j}(z) \overline{s_{k, j}(w)}$
expressed in an orthonormal basis $\{s_{k,j}\}_{j=1}^{d_k} $ of $H^0(M, L^k)$ 
is defined by 
$$M_k(z): = \sum_{j=1}^{d_k} \mu_{k,j} |s_{k,j}(z)|_{h^k}^2, \;\; (d_k = \dim H^0(M, L^k)) $$
\end{defin}
In Section  \ref{LIFT} below, we lift sections and kernels to the associated 
$U(1)$ frame bundle of $L^*$; then metric contractions are the same as values of the lifts along the diagonal.

In \cite{ZZ17} it is shown that $U_k(t)$ is a semi-classical Toeplitz Fourier integral operator of a type defined
in \cite{Z97}. As in \cite{ZZ17} we construct a parametrix 
 of the form,
\begin{equation} \label{UktPARA} \hat{\Pi}_{h^k} \sigma_{k,t} (\hat{g}^{-t})^* \hat{\Pi}_{h^k} \end{equation} where $(\hat{g}^{-t})^*$ is the pullback of functions on $X_h$
by $\hat{g}^t$ and where $\sigma_{k,t}$ is a semi-classical symbol originally calculated in \cite[Unitarization Lemma 1 (2b.5) and (3.10)]{Z97}. In fact, to leading order in $k$,  \begin{equation}\label{sigmakt}
\sigma_{k t}(z)  = \langle \Omega_{Dg^T_z J_z}, \Omega_{J_{g^t z}} \rangle^{-\half}. \end{equation}
Here, $D g^T J_z$ is the image of the complex structure at $z$ and 
$J_{g^t z}$ is the complex structure of $T_{g^t z} M$ and $\Omega_J$ denotes the ground state in the Bargmann-Fock Hilbert space with complex structure $J$. It was proved in \cite{Dau80,Z97} that \eqref{sigmakt} equals
$(\det P)^{-\half}$  by calculating the inner product of the two Gaussians.

Combining \eqref{mukzdef} and \eqref{fofHFTintro} shows that
\begin{equation} \label{RHOSQRTintrob} \mu^{z,1,E}_{k}(f): = \int_{\R} f(x) d\mu_{k}^{z,1,E} =   \int_{\R} \hat{f}( t) e^{- i E k  t} \hat{\Pi}_{h^k} \sigma_{k t} (\hat{g}^t)^* \hat{\Pi}_{h^k}(z) dt, \end{equation}  
or equivalently 
\begin{equation} \label{RHOSQRTFT} \widehat{\mu^{z,1,E}_{k}}(t)  = e^{- i E k  t} U_k(t,z,z). \end{equation}  
Using a semi-classical   Tauberian theorem, it is proved in Section \ref{TAUBERPROOF} that the singularities of \eqref{RHOSQRTFT} determine the 2-term asymptotics of $\mu^{z,1,E}_{k}[a,b]$ for any interval. 
Proposition \ref{HYPLEM} follows because  the singularities are of a different type depending on the convergence of $Q_z(k)$.



To prove the two-term Weyl law, we begin by obtaining asymptotics for  the  smoothed partial density of states \eqref{RHOSQRTintrob}. In the first case
where $z \notin \pcal_E$, the only singularity occurs at $t = 0$ and so the
expansion is the same as in  \cite[Theorem 3]{ZZ17}(recalled here as Theorem \ref{ELLSMOOTH}).  The time interval $[-\epsilon, \epsilon]$ is assumed to be so short that it contains no non-zero periods of  periodic orbits When $z \notin
H^{-1}(E)$ the expansion is rapidly decaying. Thus, the new aspect is the
second case where $z \in \pcal_E$.

\begin{theo}
 For $ f \in \scal(\R)$ with $\hat f \in C^\infty_c(\R)$, we have (see Definitions \ref{HYPERBOLIC} and \ref{CONTDEF})
\label{PikfTH} 
$$ \Pi_{k, f}(z): = \int_{\R} f d\mu_{k}^{z,1,E} = \left\{ \begin{array}{ll}  \kk^{m-1/2} \h f(0) \gcal_0(z)(1+ O(k^{-1}))., &z \in H^{-1}(E), \; z \notin \pcal_E\\ &\\ \kk^{m-1/2}  \sum_{n \in \Z}  \hat{f}(n T_z)
\;\gcal_n(z) e^{-i k n\theta_z^h} + O(k^{m-3/2}), & z \in H^{-1}(E),  z\in \pcal_E, \\ &\\
O(k^{-\infty}), & z \notin H^{-1}(E) \end{array} \right. $$
\end{theo}

To prove Theorem \ref{PikfTH} we use  the Boutet de Monvel-\Sj  parametrix for $\hat{\Pi}_{h^k}.$  This gives a parametrix for  \eqref{fofHFT} and \eqref{RHOSQRTintrob}  as   semi-classical oscillatory
integrals with complex phases. The  phase has no critical points when the orbit does not lie in $H^{-1}(E)$ and no critical points for $t \not= 0$ when
$z \notin \pcal_E$.  The main difficulty is to evaulate or interpret the phases and the  Hessian determinant (and other invariants that arise) dynamically, and to determine whether or not they are invariants of 
 $D \hat{g}^T$ or invariants of the full orbit.  One phase factor is   a holonomy integral around the periodic orbit $\hat{g}^t(x)$.
    In Proposition \ref{phase-quad} it is shown that althouth the holonomy  is apriori a  `global invariant' of the orbit rather than an invariant
 of the first return map, in fact    the Hessian of the  holonomy can be expressed as an invariant of the first return map.

    To evaluate the Hessian determinants, we first do so in the linear Bargmann-Fock setting, where 
 $H$ is a quadratic Hamiltonian on the \kahler manifold  $\C^m$,  equipped with a general complex structure $J$ and a Hermitian metric $h$.

\begin{prop}\label{WJzz}  Let $H$ be a quadratic Hamiltonian in  the Bargmann-Fock setting. Assume that $H$ has compact level sets
and non-degenerate periodic orbits on level $E$. Then, in the notation of
Definition \ref{CONTDEF}, $$\int_{\R} \hat{f}(t) U_k(t, z) e^{- it E k} dt \simeq   \kk^{m-\half}   \sum_{n \in \Z} \hat{f}(n T_z) e^{-i k \theta_{z n T_z}}  (\bar \alpha P_n^{-1} \alpha)^{-1/2}  (\det P_n)^{-1/2}, $$
where $P_n$ is the holomorphic block of $D g^{n T_z}$ \eqref{PDEF} and $\pi^{1,0} \wcal \xi_H = \alpha$.
\end{prop}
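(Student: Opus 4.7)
The plan is to use the parametrix \eqref{UktPARA} for $U_k(t)$ together with the explicit Gaussian form of the Bargmann--Fock \szego kernel. Because $H$ is quadratic the classical flow $g^t$ is linear and the parametrix is exact, so
\begin{equation*}
\int_{\R} \hat f(t)\,U_k(t,z)\,e^{-itEk}\,dt
\;=\; \int_{\R} \hat f(t)\,e^{-itEk}\,
\bigl(\hat\Pi_{h^k}\,\sigma_{k,t}\,(\hat g^{-t})^*\hat\Pi_{h^k}\bigr)(z,z)\,dt.
\end{equation*}
After lifting to the $(2m-1)$-dimensional circle bundle $X_h$ and using the explicit Gaussian form of $\hat\Pi_{h^k}$, the inner composition is a Gaussian oscillatory integral; performing that integration leaves a one-dimensional oscillatory integral in $t$ whose phase is, to the relevant order,
\begin{equation*}
\Psi(t,z) \;=\; -tE + \psi(x,\hat g^t x),
\end{equation*}
where $\psi$ is the \szego phase from the Boutet de Monvel--\Sj parametrix and $x\in X_h$ is any lift of $z$. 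This phase has non-positive real part and vanishes precisely when $\hat g^t x=x$.

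Next I would analyse the critical set of $\Psi$ in $t$. The condition $\pa_t\Psi=0$ forces $H(z)=E$; off the energy surface the integral is $O(k^{-\infty})$ by non-stationary phase. For $z\in H^{-1}(E)$ the critical points are $t=nT_z$, $n\in\Z$, and at each such time $\hat g^{nT_z}x$ returns to the $S^1$-fibre over $z$ rotated by the holonomy angle $n\theta_z^h$, producing the factor $e^{-ikn\theta_z^h}$. The non-degeneracy hypothesis on the orbit ensures that each $t=nT_z$ is a non-degenerate quadratic critical point of $\Psi$. At these times the leading value of the symbol is, by \eqref{sigmakt} together with the identity \eqref{ABCDintro}, exactly the metaplectic matrix element $(\det P_n)^{-1/2}$.

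I would then carry out complex stationary phase at each $nT_z$. Conjugating by $\wcal$ and splitting $T_zM\otimes\C=T^{1,0}_zM\oplus T^{0,1}_zM$, the Hamilton vector field $\wcal\xi_H$ decomposes into its holomorphic block $\alpha$ and its conjugate $\bar\alpha$. A second-order Taylor expansion of $\psi(x,\hat g^t x)$ about $t=nT_z$, using $\pa_t(\hat g^t x)=\xi_H^h(\hat g^t x)$ and the fact that the linearisation of $\hat g^{nT_z}$ on $H_xX\cong T_zM$ is $Dg^{nT_z}$, identifies the scalar Hessian $\pa_t^2\Psi$ with $\bar\alpha\cdot P_n^{-1}\alpha$. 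Stationary phase in the surviving $t$ direction then contributes $(\bar\alpha P_n^{-1}\alpha)^{-1/2}$, and combined with the $\kk^{m-1/2}$ prefactor from the transverse Gaussian integration on $X_h$, the symbol value $(\det P_n)^{-1/2}$, and the holonomy $e^{-ikn\theta_z^h}$, summation over $n\in\Z$ yields the stated expansion.

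The main obstacle is the identification of the scalar transverse Hessian as $\bar\alpha P_n^{-1}\alpha$. This requires carefully tracking the complex linear algebra: one must show that the infinitesimal displacement of $\hat g^t x$ away from $x$ in the $(1,0)$-direction at $t=nT_z$ is governed by $P_n$ rather than by the full symplectic matrix $S^n$ of \eqref{ABCD}, and that the \szego phase pairs this displacement against the $(0,1)$-component $\bar\alpha$ of $\wcal\xi_H$ on the conjugate side. The identity \eqref{ABCDintro} handles the determinant factor, but the rational scalar $\bar\alpha P_n^{-1}\alpha$ is a genuinely new ingredient and must be extracted by direct computation in the Bargmann--Fock model. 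Once this scalar is in hand the assembly of prefactors, holonomy and symbol values is mechanical, and an entirely parallel analysis at $t=0$ delivers the $n=0$ term $\gcal_0(z)$ in the non-periodic case.
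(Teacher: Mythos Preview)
Your overall strategy matches the paper's: represent $U_k(t)$ via the Toeplitz parametrix \eqref{UktPARA}, identify the critical times $t=nT_z$ and the holonomy phase, carry out the transverse Gaussian integral, and then do stationary phase in $t$. However, there is a genuine gap in how you account for the factor $(\bar\alpha P_n^{-1}\alpha)^{-1/2}$.

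The claim that after the transverse integration the remaining phase is ``to the relevant order $\Psi(t,z)=-tE+\psi(x,\hat g^t x)$'' is not correct. The Taylor expansion of the Szeg\H{o} phase $\psi(x,\hat g^tx)$ in $t$ about $nT_z$ produces a quadratic coefficient $|\alpha|^2$, not $\bar\alpha P_n^{-1}\alpha$; there is no $P_n^{-1}$ in the \emph{phase} $\psi(x,\hat g^t x)$ itself. What actually happens (Section~\ref{BFREDUCTION}) is that the transverse $w$-integral, together with the symbol, produces the metaplectic kernel $\hat\kcal_{k,Dg^{nT_z}}\bigl((0;0),\hat g^t(0;0)\bigr)$, and it is \emph{this} kernel that carries the Gaussian $e^{-kt^2(|\alpha|^2-\bar\alpha P_n^{-1}Q_n\bar\alpha)/2}$; the $P_n^{-1}$ enters through the quadratic form of the transverse Hessian (which involves $|Dg^{nT_z}w|^2$), not through the infinitesimal displacement of $\hat g^tx$. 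The simplification $|\alpha|^2-\bar\alpha P_n^{-1}Q_n\bar\alpha=\bar\alpha P_n^{-1}\alpha$ then requires the invariance of $\xi_H$ under the first return map, i.e. the identity $\alpha=P_n\alpha-Q_n\bar\alpha$, which is the key algebraic step and is absent from your outline.

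The paper also makes a change of variables $(t,w)\mapsto(t',w')$ with $t=nT_z+t'$, $w=g^{-t'}w'$ that cleanly separates the return-map contribution $|Dg^{nT_z}w'|^2$ from the infinitesimal-flow contribution; this is what allows the transverse integral to be recognized as a metaplectic matrix element and makes the above identity applicable. Your heuristic that ``the displacement of $\hat g^tx$ in the $(1,0)$-direction is governed by $P_n$'' is not the operative mechanism.
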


We give a detailed proof in Section \ref{BFREDUCTION}  because the general case is reduced to the Bargmann-Fock case. 
 It is shown in this article that the linearized calculation is the principal symbol of non-linear problem \eqref{RHOSQRTintrob}, hence that Theorem \ref{2TERM} can be reducedd to Proposition \ref{WJzz}. The proof consists of nothing more than Taylor expansions of the phase in suitable \kahler normal coordinates and stationary phase.

\section{Background}

The background to this article is largely the same as in \cite{ZZ17}, and we refer there for many details.  Here we give a quick review to setup the notation. First we introduce co-circle bundle $X \subset L^*$ for a positive Hermitian line bundle $(L,h)$, so that holomorphic sections of $L^k$ for different $k$ can all be represented in the same space of CR-holomorphic functions on $X$, $\hcal(X) = \oplus_k \hcal_k(X)$. The Hamiltonian flow $g^t$ generated by $\xi_H$ on $(M,\omega)$ will be lifted to a contact flow $\h g^t$ generated by $\h \xi_H$ on $X$. Then we review the Toeplitz quantization for a contact flow on $X$ following \cite{Z97, RZ}. 

\subsection{\label{SLA} Symplectic Linear Algebra}
Let $(V, \sigma)$ be a real symplectic vector space of dimension $2n$ and
let $J $ be a compatible complex structure on $V$. There exists a symplectic basis in which  $V \simeq \R^{2m} $,  $\sigma$ takes the standard form $ \omega = 2 \sum_{j=1}^m dx_j \wedge d y_j$ and   $J$ has the standard form,  $J_0 = \begin{pmatrix}  0 & - I \\ & \\
I & 0 \end{pmatrix}.$
Let $H^{1,0}_J$ resp. $H^{0,1}_J$, denote the $\pm i$ eigenspaces of $J$ in $V \otimes \C$. The projections onto these supspaces are denoted
by
$$P_J = \half(I - i J): V \otimes \C \to  H^{1,0}_J, \;\; \bar{P}_J = \half(I + i J): V \otimes \C \to H^{0,1}_J. $$

Let $S \in Sp(m, \R)$ be a real symplectic matrix. Then its  transpose $S^t = J S^{-1} J^{-1}$ also lies in  $Sp(m, \R)$ and   $S J = J (S^t)^{-1}.$
\subsection{\label{PDSS} Symmetric symplectic matrices}

A matrix $S$ is called
a symmetric symplectic matrix if $S \in Sp(n, \R)$ and $S^t = S$. 
For such $S$ it follows that $SJ = J S^{-1}$.
A good refernce for positive definite symplectic matrices is \cite[p. 6]{L02} and \cite[p. 52]{L02}.
For the following see \cite[Proposition 22]{deG}. Let 
$U(n) = Sp(n) \cap O(2n, \R)$. Then $UJ = JU$ and 
$$U = \begin{pmatrix} A & -B \\ & \\
B & A \end{pmatrix},\;\; A B^t = B^t A, \;\; A A^t + BB^t = I, \;\; U^{-1} = \begin{pmatrix} A^t &B^t \\ & \\
-B^t & A^t \end{pmatrix} = U^t. $$

\begin{prop} If $S$ is a positive definite symmetric symplectic matrix and $\Lambda = \rm{diag} (\lambda_1, \dots, \lambda_n; \lambda_1^{-1}, \dots, \lambda_n^{-1})$ is the given diagonal
matrix, then there exists $U \in U(n)$ so that $S = U^t \Lambda U. $
\end{prop}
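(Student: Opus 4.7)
The plan is to combine the spectral theorem for symmetric matrices with the reciprocal-pairing of eigenvalues that comes from the symplectic condition, and then observe that the resulting eigenvectors can be organized into a basis adapted to $J$, which is exactly what it means for the diagonalizing orthogonal matrix to lie in $U(n) = Sp(n,\R)\cap O(2n,\R)$.

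First I would record the key identity $SJ = JS^{-1}$, which follows from $S^t = S$ together with the symplectic identity $S^t = JS^{-1}J^{-1}$ noted in Section \ref{SLA}. Consequently, if $Sv = \lambda v$ then $S(Jv) = JS^{-1}v = \lambda^{-1} Jv$, so $J$ sends the $\lambda$-eigenspace onto the $\lambda^{-1}$-eigenspace. Since $S$ is symmetric positive definite, the spectral theorem yields a real orthonormal eigenbasis with positive eigenvalues, and the map $v\mapsto Jv$ is an orthogonal involution between reciprocal eigenspaces.

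Next, order the eigenvalues $\lambda_1,\dots,\lambda_n\ge 1$ with the remaining $n$ eigenvalues being their reciprocals. Choose an orthonormal basis $v_1,\dots,v_n$ of the sum of the eigenspaces $V(\lambda_j)$ for $\lambda_j\ge 1$, taking care in the possibly present eigenspace $V(1)$: there $J$ restricts to an orthogonal complex structure, and one can pick an orthonormal basis half of which is paired by $J$ with the other half. Setting $v_{j+n} := Jv_j$ produces a full orthonormal basis $\{v_1,\dots,v_n, Jv_1,\dots, Jv_n\}$ of $\R^{2n}$, in which $S$ acts diagonally with entries $\lambda_1,\dots,\lambda_n,\lambda_1^{-1},\dots,\lambda_n^{-1}$. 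Orthogonality of $v_{j+n}$ to $v_k$ for $k\ne j$ follows because these lie in distinct eigenspaces of a symmetric matrix, while orthogonality of $v_{j+n}$ to $v_j$ when $\lambda_j\ne 1$ comes from $\lambda_j^{-1}\ne \lambda_j$, and is arranged by construction when $\lambda_j=1$.

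Finally, let $U^t$ be the matrix whose columns are $v_1,\dots, v_n, Jv_1,\dots, Jv_n$. By construction $U^t$ is orthogonal, $S = U^t\Lambda U$, and $U^t J_0 = J_0 U^t$ because the block structure of $U^t$ is exactly $\begin{pmatrix} A & -B\\ B & A \end{pmatrix}$ in the notation recalled before the proposition. Commutation with $J_0$ together with orthogonality characterizes elements of $U(n) = Sp(n)\cap O(2n,\R)$, so $U\in U(n)$ as required. The only subtlety I anticipate is the treatment of the eigenspace $V(1)$, where reciprocal pairing is trivial and one must appeal to the fact that $J$ restricts to a compatible complex structure on this subspace to arrange the required orthonormal $J$-adapted basis; this is the one point where the argument is not purely formal.
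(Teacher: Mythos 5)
Your argument is correct and is essentially the paper's approach: the paper cites de Gosson and then records the same computation, namely that $SJ = JS^{-1}$ forces $S(Je_k) = \lambda_k^{-1} Je_k$, so the orthonormal eigenvectors $e_1,\dots,e_n$ together with $Je_1,\dots,Je_n$ assemble into the unitary $U$. Your extra care with the eigenspace $V(1)$, where the reciprocal pairing degenerates and one must choose a $J$-adapted orthonormal basis by hand, is a genuine refinement of the paper's sketch but not a different method.
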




The following is \cite[Proposition 26]{deG}

\begin{prop} A symplectic matrix $S$ is symmetric positive definite if and only if $S = e^X$ with $X \in \mathfrak sp(n)$
and $X = X^t$.  The map $\exp: \mathfrak sp(n) \cap Sym(2n, \R) \to Sp(n) \cap Sym_+(2n, \R)$ is a diffeomorphism.
\end{prop}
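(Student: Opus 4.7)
The plan is to prove the equivalence by handling the two implications separately and then derive the diffeomorphism claim from the classical analogue on symmetric matrices. For the easy direction $X \in \mathfrak{sp}(n) \cap \operatorname{Sym}(2n,\R) \Rightarrow e^X \in Sp(n) \cap \operatorname{Sym}_+(2n,\R)$, I would check each of the three properties in turn. Symmetry of $e^X$ is immediate from $(e^X)^t = e^{X^t} = e^X$. Positive definiteness follows because a real symmetric matrix is orthogonally diagonalizable with real eigenvalues, so $e^X = O e^\Lambda O^t$ has strictly positive eigenvalues. For the symplectic condition I would differentiate $t \mapsto e^{tX^t} J e^{tX}$; the defining relation $X^t J + J X = 0$ of $\mathfrak{sp}(n)$ forces the derivative to vanish identically, so the value at $t=1$ equals the value at $t=0$, namely $J$, giving $(e^X)^t J e^X = J$.

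For the converse, given a symmetric positive definite symplectic $S$, I would define $X := \log S$ via the spectral theorem, which makes $X$ automatically real symmetric. Combining $S^t J S = J$ with $S^t = S$ gives $J S J^{-1} = S^{-1}$, which I rewrite as $e^{JXJ^{-1}} = e^{-X}$. A short computation using $J^t = J^{-1} = -J$ shows that $JXJ^{-1}$ is again real symmetric, as is $-X$; since the real exponential is injective on $\operatorname{Sym}(2n,\R)$, I conclude $JXJ^{-1} = -X$, which after invoking $X^t = X$ becomes $X^t J + JX = 0$, placing $X$ in $\mathfrak{sp}(n) \cap \operatorname{Sym}(2n,\R)$. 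The diffeomorphism assertion then reduces to the classical fact that $\exp: \operatorname{Sym}(2n,\R) \to \operatorname{Sym}_+(2n,\R)$ is a real-analytic diffeomorphism with inverse given by the holomorphic functional calculus for $\log$; the equivalence just proved shows that this diffeomorphism restricts to a bijection between the intersections with $\mathfrak{sp}(n)$ and $Sp(n)$ respectively, and smoothness is inherited because $\mathfrak{sp}(n) \cap \operatorname{Sym}(2n,\R)$ is a closed linear subspace of $\operatorname{Sym}(2n,\R)$.

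The only delicate point is justifying $JXJ^{-1} = -X$ from the matrix identity $e^{JXJ^{-1}} = e^{-X}$. This step requires both sides to be real symmetric so that the real logarithm on $\operatorname{Sym}_+(2n,\R)$ is single-valued; the symmetry of $JXJ^{-1}$ uses the orthogonality of $J$, namely $J^t = -J = J^{-1}$ and $(J^{-1})^t = J$, which gives $(JXJ^{-1})^t = J X^t (-J) = -JXJ = JXJ^{-1}$. Aside from this short bookkeeping and one invocation of injectivity of $\exp$ on the symmetric cone, the proof is routine Lie theory and requires no substantial new ideas.
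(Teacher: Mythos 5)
Your proof is correct. Note that the paper does not prove this proposition at all — it is quoted verbatim from de Gosson's book (cited there as Proposition 26) — so there is no argument in the paper to compare against; your two-directional argument (checking symmetry, positivity, and the symplectic condition for $e^X$ via differentiating $t \mapsto e^{tX^t} J e^{tX}$, and conversely extracting $JXJ^{-1} = -X$ from $JSJ^{-1} = S^{-1}$ using injectivity of $\exp$ on $\operatorname{Sym}(2n,\R)$) is exactly the standard proof, and your identification of the one delicate step (that both $JXJ^{-1}$ and $-X$ must be known to be real symmetric before cancelling the exponentials) is the right thing to flag.
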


If $e_1, \dots, e_n$ are orthonormal eigenvectors of $S$ corresponding to the eigenvalues $\lambda_1, \dots, \lambda_n$
then since $S J = JS^{-1}$, 
$$S J e_k = J S^{-1} e_k = \frac{1}{\lambda_j} J e_k. $$
Hence $\pm J e_1, \dots, \pm J e_n$ are orthonormal eigenvectors of $U$ corresponding to eigenvalues
$\lambda_1^{-1}, \dots, \lambda_n^{-1}$ and 
$\begin{pmatrix} A \\ B \end{pmatrix} = [e_1, \dots, e_n]. $



\subsection{\label{BFVS} The Bargmann-Fock space of a complex Hermitian vector
space}

The Bargmann-Fock spaces can be defined more generally for any complex structure $J$ on $\R^{2n}$  and any Hermitian metric on $\C^n$.

Let $(V, \omega)$ be a real symplectic vector space. Define 
$$\jcal = \{J : \R^{2n} \to \R^{2n}, \;\; J^2 = -I, \;\; \omega(JX, JY) = \omega(X, Y),
\;\; \omega(X, J  X)>> 0\}$$
to be the space of complex structures on $\R^n$ compatible with $\omega$.
 The Bargmann-Fock space of a symplectic
vector space $(V, \sigma)$ with compatible complex structure $J \in \jcal$ is the Hilbert space,
$$\hcal_{J} = \{f  e^{-\half \sigma(v, J v)} \in L^2(V, dL), f\; \mbox{is\; entire \; J-\;holomorphic}\}. $$ 
Here,    \begin{equation} \label{GSJ} \Omega_{J} (v) := e^{-\half \sigma(v, J v)}
\end{equation} is  the `vacuum state' and $d L$ is normalized Lebesgue measure (normalized
so that square of the symplectic Fourier transform is the identity).
The orthogonal projection onto $\hcal_J$ is denoted by $P_J$ in \cite{Dau80} but we
denote it by $\Pi_J$ in this article. Its Schwartz kernel relative to $dL(w)$ is denoted
by $\Pi_J(z,w)$.

\begin{rem} The Bargmann-Fock space with $J = i$ the standard complex structure is often defined instead as the weighted Hilbert space
of entire holomorphic functions with Gaussian weight $C_n e^{- |z|^2} dL(z)$ where
$C_n$ is a dimensional constant. In  this definition the vacuum state is $1$. There
is a natural isometric `ground states' isomorphism to $\hcal_J$ defined by multiplying by $\sqrt{\Omega_J}$. With the Gaussian measure, the Bergman kernel is $B(z,w) = e^{z \cdot \bar{w}}$. When $V = \C^n$  we write $v = Z$, $J Z = i Z$,  and $\sigma(Z,W) = \Im \overline{Z} \cdot W$. 
Then $\Omega_J(Z) = e^{- \half |Z|^2}.$
\end{rem}

\subsection{Bargmann-Fock Bergman kernels}

For BF model, we have
$\Pi_k: L^2(M, L^k) \to H^0(M, L^k)$ the Bergman projection operator. And $\h \Pi_k: L^2(X) \to \hcal_k(X)$, the Szego projection operator on $X$ to Hardy space's Fourier component. Let $H$ also denote its pull back on $X$.

The semi-classical Bargmann-Fock Bergman kernels \eqref{Pikdef} on $\C^n$ are given  by
\begin{equation} \label{SCBFPI}  \Pi^{\C^m}_{k, h_0, i}(z,w) = \kk^{m} e^{k (z \bar w - |z|^2/2 - |w|^2/2)}. \end{equation}
Their lifts to $X$ are given by
\[ \hat{\Pi}^{ \C^m}_{k, h_0, i} (\hat{z}, \hat{w}) = \kk^{m} e^{k \psi(\h z, \h w)} \] where 
\begin{equation} \label{BFphase}  \h \psi(\h z, \h w) = i (\theta_z - \theta_w)  + \psi(z,w) = i(\theta_z - \theta_w) +  z \cdot \bar w - |z|^2/2 - |w|^2/2. \end{equation}
where  $\h z = (\theta_z, z) \in S^1 \times M \cong X$ denotes a lift of $z$. \footnote{We also use the notation $x = (z, \theta_z), y = (w, \theta_w)$}

In the general case, by  (3.1) of \cite{Dau80}, one has
\begin{equation} \Pi_J \psi(z) = \langle \Omega_J^z, \psi \rangle = \int_{\C^n} \psi(v)
\overline{\Omega_J^z}(v) dv,\end{equation} i.e.
\begin{equation}\label{PIBF} \Pi_J(z,w) = \overline{\Omega_J^z}(w) = e^{ i \sigma(z,w)} e^{- \half \sigma(z - w, J (z-w))}   \end{equation}
which redcues to $  e^{i \Im z \bar{w}} e^{- \half (|z - w|^2)} =  e^{z \bar{w}} e^{- \half (|z |^2 + |w|^2)}$ in the case $J = i, h =h_0$.

\subsection{\label{CRSECT}Holomorphic sections in $L^k$ and CR-holomorphic functions on $X$}
Let $(L,h) \to (M, \omega)$ be a positive Hermitian line bundle, $L^*$ the dual line bundle. Let 
\[ X := \{ p \in L^* \mid \|p\|_h = 1\}, \quad \pi: X \to M \]
be the unit circle bundle over $M$. 

Let $e_L \in \Gamma(U, L)$ be a non-vanishing holomorphic section of $L$ over $U$, $\varphi = -\log \|e_L\|^2$ and $\omega = i \ddbar\varphi$. We also have the following trivialization of $X$:
\be \label{X-triv} U \times S^1 \cong X|_U, (z; \theta) \mapsto e^{i\theta} \frac{e_L^*|_z}{\|e_L^*|_z\|}. \ee

$X$ has a structure of a contact manifold. Let $\rho$ be a smooth function in a neighborhood of $X$ in $L^*$, such that $\rho>0$ in the open unit disk bundle, $\rho|_X=0$ and $d\rho|_X\neq 0$. Then we have a contact one-form on $X$
\begin{equation} \label{alphadef} \alpha =- \Re(i\dbar\rho)|_X, \end{equation}
well defined up to multiplication by a positive smooth function. We fix a choice of $\rho$ by
\[ \rho(x) = - \log \|x\|_h^2, \quad x \in L^*, \]
then in local trivialization of $X$ \eqref{X-triv}, we have
\be \alpha =   d \theta   - \frac{1}{2}  d^c \varphi(z). \label{alpha-def} \ee

$X$ is also a strictly pseudoconvex CR manifold. The {\it  CR structure} on $X$ is defined as
follows:
 The kernel of $\alpha$ defines a horizontal hyperplane bundle \begin{equation} \label{HDEF} HX :=
\ker \alpha \subset TX, \end{equation}  
invariant under $J$ since $\ker \alpha = \ker d \rho \cap \ker d^c\rho$. Thus we have a splitting
\[ TX \ot \C \cong H^{1,0} X \oplus H^{0,1}X \oplus \C R.\]
A function $f: X \to \C$ is CR-holomorphic, if $df|_{H^{0,1}X} = 0$.

A holomorphic section $s_k$ of $L^k$ determines a CR-function $\h s_k$ on $X$ by
\[ \h s_k(x) := \la x^{\ot k}, s_k\ra, \quad x \in X \subset L^*. \]
Furthermore $\h s_k$ is of degree $k$ under the canonical $S^1$ action $r_\theta$ on $X$, $\h s_k(r_\theta x) = e^{i k \theta} \h s_k(x)$. The inner product on $L^2(M,L^k)$ is given by
\[ \la s_1, s_2 \ra := \int_M h^k(s_1(z), s_2(z)) d \Vol_M(z), \quad d \Vol_M = \frac{\omega^m}{m!}, \]
and inner product on $L^2(X)$ is given by
\[ \la f_1, f_2 \ra := \int_X f_1(x) \wb{f_2(x)} d \Vol_X(x), \quad d \Vol_X =  \frac{\alpha}{2\pi}\wedge\frac{(d\alpha)^m}{m!}. \]
Thus, sending $s_k \mapsto \h s_k$ is an isometry.

\subsection{\Szego kernel on $X$}
On the circle bundle $X$ over $M$, we define the orthogonal projection from $L^2(X)$ to the CR-holomorphic subspace $\hcal (X) = \h \oplus_{k \geq 0} \hcal_k(X)$, and degree-$k$ subspace $\hcal_k(X)$: 
\[ \h \Pi: L^2(X) \to \hcal(X), \quad \h \Pi_k: L^2(X) \to \hcal_k(X), \quad \hPi = \sum_{k \geq 0} \hPi_k. \]
The Schwarz kernels $\hPi_k(x,y)$ of $\hPi_k$ is called the degree-$k$ \Szego kernel, i.e. 
\[ (\hPi_k F)(x) = \int_X \hPi_k(x,y) F(y) d \Vol_X(y), \quad \forall F \in L^2(X). \]
If we have an orthonormal basis $\{\h s_{k,j}\}_j$ of $\hcal_k(X)$, then
\[ \hPi_k(x,y) = \sum_j \h s_{k,j}(x) \wb{ \h s_{k,j}(y)}. \] 

The degree-$k$ kernel can be extracted as the Fourier coefficient of $\hPi(x,y)$
\begin{equation} \label{Pikdef}  \hPi_k(x,y) = \frac{1}{2\pi} \int_0^{2\pi} \hPi(r_\theta x, y) e^{-i k \theta} d \theta. \end{equation}
We refer to \eqref{Pikdef} as the {\it semi-classical Bergman kernels}.

\subsection{Boutet de Monvel-Sj\"ostrand parametrix for the \Szego kernel}

Near the diagonal in $X \times X$, there exists a parametrix due to  Boutet de Monvel-Sj\"ostrand 
\cite{BSj} for the \Szego kernel of the form,  
\begin{equation} \label{SZEGOPIintroa}  
\hat{\Pi}(x,y) =  \int_{\R^+} e^{\sigma \h \psi(x,y)} s(x, y ,\sigma) d \sigma  + \hat{R}(x,y). 
\end{equation} 
where $\h \psi(x,y)$ is the almost-CR-analytic extension of $\h \psi(x,x)=-\rho(x) = \log \|x\|^2$, and $s(x,y,\sigma) = \sigma^m s_m(x,y) + \sigma^{m-1} s_{m-1}(x,y) + \cdots$ has a complete asymptotic expansion.  
In local trivialization \eqref{X-triv}, 
\[ \h \psi(x,y) = i (\theta_x - \theta_y) + \psi(z, w) - \half \varphi(z) - \half \varphi(w),\]
where $\psi(z,w)$ is the almost analytic extension of $\varphi(z)$.

\subsection{Lifting the Hamiltonian flow to a contact flow on $X_h$.}\label{LIFT} 
In this seection we review the definition of the  lifting of a Hamiltonian flow to a contact flow, following \cite[Section 3.1]{ZZ17}. 
Let $H: M \to \R$ be a Hamiltonian function on $(M, \omega)$. Let $\xi_H$ be the Hamiltonian vector field associated to $H$, such that $dH = \iota_{\xi_H}\omega$. 
The purpose of this section is to  lift $\xi_H$ to a contact vector field $\hat{\xi}_H$ on $X$. Let $\alpha$ denote the contact 1-form \eqref{alpha-def} on $X$, and $R$ the corresponding Reeb vector field determined by $\la \alpha, R \ra =1$ and $\iota_{R} d\alpha=0$. One can check that $R=\pa_\theta$. 

\begin{defin}
(1) The horizontal lift of $\xi_H$ is a vector field on $X$ denoted by  ${\xi}_H^h$. It is determined by 
\[ \pi_*{\xi}_H^h = \xi_H, \quad \la \alpha, \xi_H^h \ra = 0. \]
(2) The contact lift of $\xi_H$ is a vector field on $X$ denoted by  $\h {\xi}_H$. It is determined by
\[ \pi_*\h {\xi}_H = \xi_H, \quad \lcal_{\h \xi_H} \alpha = 0. \]
\end{defin}

\bl \label{xiHLEM} 
The contact lift $\h \xi_H$ is given by
\[ \hat{\xi}_H = {\xi}_H^h - H R. \]
\el

The Hamiltonian flow on $M$ generated by $\xi_H$ is denoted by $g^t$
\[ g^t: M\to M, \quad g^t = \exp(t \xi_H). \]
The contact flow on $X$ generated by $\h \xi_H$ is denoted by $\h g^t$
\[\h g^t: X \to X, \quad \h g^t = \exp(t \h \xi_H). \]

\begin{lem} \label{gtform} In  local trivialization \eqref{X-triv}, we have a useful formula for the flow, 
	  $\hat{g}^t$ has the form (see \cite[Lemma 3.2]{ZZ17}):
\[ \hat{g}^t(z, \theta) = (g^t(z), \;\; \theta 
	+   \int_0^t \half \la d^c \varphi, \xi_H \ra(g^s(z))ds  - t H(z)).  \]
\end{lem}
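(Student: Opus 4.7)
The plan is to derive the formula by first unpacking what $\hat\xi_H$ looks like in the given local trivialization $(z,\theta)$, then integrating the ODE it defines.

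First I would use Lemma \ref{xiHLEM}, which already tells us $\hat\xi_H = \xi_H^h - HR$. In the local trivialization \eqref{X-triv} the Reeb vector field is $R = \partial_\theta$, and the contact form is $\alpha = d\theta - \tfrac{1}{2} d^c\varphi$. To compute $\xi_H^h$, I would write it as a lift of $\xi_H$ that is trivial in the $z$-direction plus a vertical correction:
\begin{equation}
\xi_H^h = \xi_H + c(z)\,\partial_\theta,
\end{equation}
where I am (by abuse of notation) identifying $\xi_H$ with its trivial horizontal lift in the $(z,\theta)$-chart. The requirement $\langle \alpha, \xi_H^h\rangle = 0$ then forces $c(z) = \tfrac{1}{2}\langle d^c\varphi, \xi_H\rangle(z)$. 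Therefore
\begin{equation}
\hat\xi_H = \xi_H + \bigl(\tfrac{1}{2}\langle d^c\varphi, \xi_H\rangle(z) - H(z)\bigr)\,\partial_\theta .
\end{equation}

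Next I would write down the corresponding flow equations. Since the $z$-component of $\hat\xi_H$ is just $\xi_H$, the projected flow is $g^t$ by definition. For the $\theta$-component, writing $\hat g^t(z,\theta) = (g^t(z), \Theta(t;z,\theta))$, we get the ODE
\begin{equation}
\frac{d}{dt} \Theta(t;z,\theta) = \tfrac{1}{2}\langle d^c\varphi, \xi_H\rangle(g^t(z)) - H(g^t(z)), \qquad \Theta(0;z,\theta) = \theta.
\end{equation}
Integrating yields $\Theta(t;z,\theta) = \theta + \int_0^t \tfrac{1}{2}\langle d^c\varphi, \xi_H\rangle(g^s(z))\,ds - \int_0^t H(g^s(z))\,ds$. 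Finally I would invoke conservation of energy: since $H$ is constant along its own Hamiltonian flow, $H(g^s(z)) = H(z)$ for all $s$, so the last integral simplifies to $tH(z)$, giving exactly the claimed formula.

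The only subtle step is the identification of $\xi_H^h$ in coordinates, which relies on the fact that $\alpha$ splits as $d\theta - \tfrac{1}{2} d^c\varphi$ and that the trivialization lets us write any lift as horizontal (in the $z$-direction) plus a multiple of $\partial_\theta$. Everything else is routine integration together with the conservation $H\circ g^t = H$. No substantial obstacle is anticipated.
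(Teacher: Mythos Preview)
Your argument is correct and is exactly the natural one: write $\hat\xi_H$ in the trivialization using Lemma~\ref{xiHLEM} and $\alpha = d\theta - \tfrac12 d^c\varphi$, then integrate the resulting ODE for the $\theta$-component and use energy conservation. The paper does not supply its own proof here but simply defers to \cite[Lemma~3.2]{ZZ17}, where the same computation is carried out; your proposal matches that approach.
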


Since $\hat{g}^t$ preserves $\alpha$ it preserves the horizontal distribution $H(X_h) = \ker \alpha$, i.e.
\begin{equation} \label{HSPLIT} D \hat{g}^t: H(X)_x \to H(X)_{\hat{g}^t(x)}. \end{equation} It also preserves the vertical
(fiber) direction and therefore preserves the splitting $V \oplus H$ of $T X$. Its action in the vertical direction is determined by 
Lemma \ref{gtform}.  When $g^t$ is non-holomorphic, $\hat{g}^t$ is not CR holomorphic, i.e. does not preserve the horizontal complex structure $J$ or the
splitting of $H(X) \otimes \C$ into its $\pm i $ eigenspaces.

\subsection{Toeplitz Quantum Dynamics}
Here we consider quantization for the Hamiltonian flow $g^t$ on holomorphic sections of $L^k$, or CR-functions of degree $k$ on $X$. 
An operator $T: C^\infty(X) \to C^\infty(X)$ is called a {\em Toeplitz operator of order $k$}, denoted as $T \in \tcal^{k}$, if it can be written as $T = \h \Pi \circ Q \circ \h \Pi$, where $Q$ is a pseudo-diffferential operator on $X$ . Its principal symbol $\sigma(T)$ is the restriction of the principal symbol of $Q$ to the symplectic cone 
\[ \Sigma = \{(x, r \alpha(x)) \mid r > 0\} \cong X \times \R_+ \subset T^*X.\] 
The symbol satisfies the following properties
\[
\bcs
\sigma(T_1 T_2) = \sigma(T_1) \sigma(T_2); \\
\sigma([T_1, T_2]) = \{ \sigma(T_1), \sigma(T_2)\}; \\
\text{If $T \in \tcal^k$, and $\sigma(T) = 0$, then $T \in \tcal^{k-1}$.}
\ecs
\]
The choice of the pseudodifferential operator $Q$ in the definition of $T = \hPi \, Q \, \hPi$ is not unique. However, there exists some particularly nice choices. 
\bl[\cite{BG81} Proposition 2.13] \label{lm:niceQ}
Let $T$ be a Toeplitz operator on $\Sigma$ of order $p$, then there exists a pseudodifferential operator $Q$ of order $p$ on $X$, such that $[Q, \hPi] = 0$ and $T = \hPi \, Q \, \hPi$. 
\el

Now we specialize to the setup here, following closely \cite{RZ}. Consider an order one self-adjoint Toeplitz operator 
\[ T = \h \Pi \circ (H \cdot \mathbf{D}) \circ \h \Pi, \] where $\mathbf{D} = (-i \pa_\theta)$ and $\pa_\theta$ is the fiberwise rotation vector field on $X$, and $H$ is multiplication by $\pi^{-1}(H)$, where we abuse notation and identify $H$ downstairs with its pullback upstairs $\pi^{-1}(H)$. We note that $\mathbf{D}$ decompose $L^2(X)$ into eigenspaces $\oplus_{k \in \Z} L^2(X)_k$ with eigenvalue $k \in \Z$. 
The symbol of $T$ is a function on $\Sigma \cong X \times \R_+$, given by 
\[ \sigma(T) (x, r)=  (\sigma(H) \sigma(\mathbf{D})|_\Sigma)(x,r) = H(x) r, \quad \forall (x,r) \in \Sigma. \]
\bd[\cite{RZ}, Definition 5.1]
Let $\h U(t)$ denote the one-parameter subgroup of unitary operators on $L^2(X)$, given by 
\be \h U(t) := \hPi\, e^{i t \hPi (\mathbf{D} H) \hPi} \, \hPi : \hcal(X) \to \hcal(X), \ee
and let $\h U_k(t)$ \eqref{UkDEF} denote the Fourier component acting on $L^2(X)_k$:
\be \h U_k(t) :=  \hPi_k \, e^{i t \hPi (k H) \hPi} \, \hPi_k: \hcal_k(X) \to \hcal_k(X) \label{UkDEFb} \ee
We use $U_k(t)$ to denote the corresponding operator on $H^0(M, L^k)$. 
\ed

\bpp[\cite{RZ}, Proposition 5.2]
$\h U(t)$ is a group of Toeplitz Fourier integral operators on $L^2(X)$, whose underlying canonical relation is the graph of the time $t$ Hamiltonian flow of $r H$ on the sympletic cone $\Sigma$ of the contact manifold $(X,\alpha)$. 
\epp

\begin{prop} [\cite{Z97}] \label{SC}
	There exists a semi-classical symbol  $\sigma_{k}(t)$ so that the unitary group \eqref{UkDEFb}  has the form
	\be  \label{TREP}  \hat{U}_k(t)   = \hat{\Pi}_{k}  (\hat{g}^{-t})^* \sigma_{k}(t) \hat{\Pi}_{k}  \ee
	modulo smooth kernels of order $k^{-\infty}$.
\end{prop}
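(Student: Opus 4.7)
The idea is to differentiate the ansatz $V_k(t) := \hat\Pi_k (\hat g^{-t})^* \sigma_k(t) \hat\Pi_k$ in $t$, match it to the ODE
\[
\partial_t \hat U_k(t) \;=\; i\,\hat\Pi_k(kH)\hat\Pi_k\,\hat U_k(t), \qquad \hat U_k(0) = \hat\Pi_k
\]
satisfied by $\hat U_k(t)$, and show that a semiclassical symbol $\sigma_k(t)\sim \sum_{j\ge 0} k^{-j}\sigma_j(t)$ can be chosen so that $V_k(t)$ satisfies this ODE modulo $O(k^{-\infty})$. By Duhamel's formula, uniqueness for the ODE then gives $\hat U_k(t) = V_k(t)$ modulo kernels of order $k^{-\infty}$. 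The natural framework is Proposition 5.2 of \cite{RZ}: both $\hat U_k(t)$ and $V_k(t)$ are Toeplitz Fourier integral operators associated with the graph of the Hamiltonian flow of $rH$ on the symplectic cone $\Sigma$, so they differ by a Toeplitz operator whose symbol can be made to vanish to infinite order by adjusting $\sigma_k(t)$.

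\textbf{Step 1: derive the transport equation.} From $\tfrac{d}{dt}(\hat g^{-t})^* = -(\hat g^{-t})^*\hat\xi_H$ and Lemma \ref{xiHLEM}, $\hat\xi_H = \xi_H^h - H R$. On functions of degree $k$, $R$ acts as multiplication by $ik$, so $\hat\xi_H$ restricted to the image of $\hat\Pi_k$ equals $\xi_H^h - ikH$. Hence
\[
\partial_t V_k(t) \;=\; ik\,\hat\Pi_k (\hat g^{-t})^* H \sigma_k(t)\hat\Pi_k \;-\; \hat\Pi_k(\hat g^{-t})^*\bigl(\xi_H^h \sigma_k(t) - \partial_t\sigma_k(t)\bigr)\hat\Pi_k.
\]
Because $H$ is constant along its own Hamiltonian flow, $(\hat g^{-t})^*H = H(\hat g^{-t})^*$ as multiplication operators, which turns the first term into $ik\,\hat\Pi_k H (\hat g^{-t})^*\sigma_k(t)\hat\Pi_k$. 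Comparing with the desired right-hand side $ik\,\hat\Pi_k H \hat\Pi_k V_k(t)$ and invoking Egorov-type commutator identities $[H,\hat\Pi_k] = O(k^{-1})$ in the Toeplitz sense, the matching reduces to the transport equation
\[
\partial_t \sigma_k(t) \;=\; \xi_H^h\,\sigma_k(t) \;+\; k^{-1}\mathcal{E}_k(t)\sigma_k(t),
\]
where $\mathcal{E}_k(t)$ is an explicit operator arising from the subprincipal contributions of $\hat\Pi_k$ and from $[\hat\Pi_k,\hat g^{-t})^*]$, computable via the Boutet de Monvel--Sj\"ostrand parametrix \eqref{SZEGOPIintroa}.

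\textbf{Step 2: solve order by order and sum.} Expand $\sigma_k(t) \sim \sigma_0(t) + k^{-1}\sigma_1(t) + k^{-2}\sigma_2(t) + \cdots$ with initial condition $\sigma_0(0)=1$, $\sigma_j(0)=0$ for $j\ge 1$. The leading equation $\partial_t\sigma_0 = \xi_H^h\sigma_0$ is a linear first-order ODE along the horizontal lift of the Hamiltonian flow on $X$; a unique smooth solution exists. Each subsequent $\sigma_j(t)$ is determined recursively by an inhomogeneous linear transport equation of the same form with source built from $\sigma_0,\dots,\sigma_{j-1}$ and the subprincipal data of the \Szego projection. Applying Borel summation asymptotically sums these into a single smooth symbol $\sigma_k(t)$, and the resulting operator $V_k(t)$ satisfies the target ODE modulo $O(k^{-\infty})$ in any $C^N$ operator seminorm. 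The formula \eqref{sigmakt} for the leading term arises by recognizing the transport factor along $\hat g^t$ as the inner product of the two osculating Bargmann--Fock vacua at $z$ and $g^t z$; this identification is what ultimately links $\sigma_k(t)$ to the metaplectic matrix elements appearing in Proposition \ref{WJzz}.

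\textbf{Main obstacle.} The delicate point is the computation of the subprincipal contribution $\mathcal{E}_k(t)$ coming from the Szeg\"o projection: since $\hat g^t$ is generally non-holomorphic and does not preserve the $H^{1,0}X \oplus H^{0,1}X$ splitting, the intertwining between $\hat\Pi_k$ and $(\hat g^{-t})^*$ involves genuine $\dbar$-corrections. Here the Boutet de Monvel--Sj\"ostrand parametrix, combined with stationary phase in the fiber variable $\sigma$, is essential to identify these corrections as Toeplitz operators of the appropriate (strictly lower) order, so that the iterative scheme closes up and produces the desired asymptotic symbol.
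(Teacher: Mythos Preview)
The paper does not actually prove this proposition; it is quoted from \cite{Z97} (the ``Unitarization Lemma''), so there is no in-paper argument to compare against. Your Duhamel/transport-equation strategy is the right one and is essentially how the result is proved in \cite{Z97}: one writes $V_k(t)=\hat\Pi_k(\hat g^{-t})^*\sigma_k(t)\hat\Pi_k$, differentiates, and solves for $\sigma_k$ order by order so that $V_k$ and $\hat U_k$ agree modulo $O(k^{-\infty})$.

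That said, your bookkeeping of orders in Step~1--Step~2 has a real gap. In your differentiation of $V_k$ you have silently dropped the term $\hat\Pi_k(\hat g^{-t})^*\sigma_k\,\xi_H^h\,\hat\Pi_k$ coming from Leibniz (the vector field $\hat\xi_H$ hits $\hat\Pi_k f$, not just $\sigma_k$). More seriously, when you compare $ik\,\hat\Pi_k H(\hat g^{-t})^*\sigma_k\hat\Pi_k$ with the target $ik\,\hat\Pi_k H\hat\Pi_k V_k$, the discrepancy is $ik\,\hat\Pi_k H(I-\hat\Pi_k)(\hat g^{-t})^*\sigma_k\hat\Pi_k$. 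Because $(\hat g^{-t})^*$ is \emph{not} CR-holomorphic, $(I-\hat\Pi_k)(\hat g^{-t})^*\hat\Pi_k$ is only one Toeplitz order down, and after the explicit factor $ik$ this discrepancy lands at order $k^0$, not $k^{-1}$. In other words, your ``subprincipal'' operator $\mathcal{E}_k$ has a piece at order~$0$ that must enter the \emph{leading} transport equation. As written, your equation $\partial_t\sigma_0=\xi_H^h\sigma_0$ with $\sigma_0(0)=1$ forces $\sigma_0\equiv 1$ along the flow, which contradicts \eqref{sigmakt}: the principal symbol is the nontrivial metaplectic overlap $\langle\Omega_{Dg^t_zJ_z},\Omega_{J_{g^tz}}\rangle^{-1/2}$. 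The fix is exactly what you flag in your ``Main obstacle'' paragraph, but it must be promoted from a subprincipal correction to the driver of the principal transport equation: the failure of $\hat\Pi_k$ to commute with the non-holomorphic pullback is what produces the metaplectic factor at top order, cf.\ \cite[(2b.5), (3.10)]{Z97} and Lemma~\ref{KEYID2}.
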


It follows from the above proposition and the   Boutet de Monvel--Sj\"ostand parametrix  construction that
$\h U_k(t, x, x)$ admits an oscillatory integral representation of the form,
\be \hat{U}_k (t, x, x)  \simeq \int_X \int_0^{\infty}    \int_0^{\infty} \int_{S^1}  \int_{S^1} 
e^{  \sigma_1 \h \psi(r_{\theta_1} x,  \h g^t y) + \sigma_2 \h \psi(r_{\theta_2} y, x) - i k \theta_1 -  i k \theta_2}  
S_k  d \theta_1 d \theta_2 d \sigma_1  d \sigma_2 d y  \label{hU} 
\ee
where $S_k$ is a semi-classical symbol, and the asymptotic symbol $\simeq$ means that the difference of the two sides is rapidly decaying in $k$.

\section{\label{BF} Bargmann-Fock space}
In this section, we illustrate the various definition of the background section using the example of Bargmann-Fock (BF) space. We also define the osculating BF space for at the tangent space $T_zM$ for a general \kahler manifold, and show that in the semi-classical limit as $k \to \infty$ the Bergman kernel near the diagonal reduces to the BF model at leading order. 

\subsection{Set-up}
Let $M=\C^m$ with coordinate $z_i=x_i + \sqrt{-1} y_i$, $L \to M$ be the trivial line bundle. We fix a trivialization and identify $L \cong \C^m \times \C$. 
We use  \kahler form $\omega = i \sum_i dz_i \wedge d\bar z_i$ and \kahler potential $\varphi(z)=|z|^2: = \sum_i |z_i|^2$. \footnote{Our choice of $\omega$ may differ from other conventions by factors of $2$ or $\pi$.}  The Bargmann-Fock space of degree $k$ on $\C^m$ is defined by
\[ \hcal_k = \{ f(z) e^{-k|z|^2/2} \mid f(z) \text{ holomorphic function on $\C^m$}, \quad \int_{\C^m} |f|^2 e^{-k|z|^2} < \infty \}. \]
The volume form on $\C^m$ is $d \Vol_{\C^m} = \omega^m/m!$. 

More generally, fix $(V, \omega)$ be a real $2m$ dimensional symplectic vector space. Let $J: V \to V$ be a $\omega$ compatible linear complex structure, that is $g(v,w): = \omega(v,Jw)$ is a positive-definite bilinear form and $\omega(v,w) = \omega(Jv, Jw)$. 
There exists a canonical identification of $V \cong \C^m$ up to $U(m)$ action, identifying $\omega$ and $J$. We denote the BF space for $(V, \omega, J)$ by $\hcal_{k,J}$. 

The circle bundle $\pi: X \to M$ can be trivialized as $X \cong \C^m \times S^1$. The contact form on $X$ is
\[ \alpha = d\theta + (i/2) \sum_j(z_j d\bar z_j - \bar z_j dz_j). \]
If $s(z)$ is a holomorphic function (section of $L^k$) on $\C^m$, then its CR-holomorphic lift to $X$ is 
\[ \h s(z, \theta) = e^{k(i \theta - \half |z|^2)} s(z). \]
Indeed, the horizontal lift of $\pa_{\bar z_j}$ is $ \pa_{\bar z_j}^h =\pa_{\bar z_j} -  \frac{i}{2} z_j \pa_\theta, $
and $\pa_{\bar z_j}^h \h s(z, \theta) = 0$. 
The volume form on $X=\C^m \times S^1$ is $d \Vol_{X} =(d\theta/2\pi) \wedge \omega^m/m!$.

\subsection{Bergman Kernel on Bargmann-Fock space}
 The  degree $k$  Bergman kernel downstairs on $\C^m$ is given by
\[ \Pi_k(z,w) = \kk^{m} e^{z \bar w}. \]
Given any function $f \in L^2(\C^m, e^{-k|z|^2/2}dVol_{\C^m})$, its orthogonal projection to holomorphic function is given by 
\[ (\Pi_k f)(z) = \int_{\C^m} \Pi_k(z,w) f(w) e^{-k|w|^2} d \Vol_{\C^m}(w). \]

The degree $k$ Bergman (\Szego) kernel $\h \Pi_k(\h z, \h w)$ upstairs for $X=\C^m \times S^1$ is given by 
\begin{equation} \label{BFSZEGODEF}   \h \Pi_k(\h z, \h w) =   \kk^m e^{k \h \psi(\h z, \h w)}, \end{equation} where $\h z =(z, \theta_z), \; \h w = (w, \theta_w)$ and the phase function is 
\be \label{BF-phase}  \psi(\h z, \h w)  = i   (\theta_z - \theta_w) + z \bar w - \half |z|^2 - \half |w|^2. \ee

\subsection{Heisenberg Representation}
The space $\C^m \times S^1$ can be identified with the reduced Heisenberg group $\Hb^m_{red}$, where the group multiplication is given by 
\[ (z, \theta) \circ (z', \theta') = (z+z', \theta+\theta' + \Im( z \bar z')). \]

\bl
The contact form $\alpha = d\theta + \frac{i}{2} \sum_j (z_j d\bar z_j - \bar z_j dz_j)$ on $\H^m_{red}$ is invariant under the left multiplication
\[ L_{(z_0, \theta_0)}: (z, \theta) \mapsto (z_0, \theta_0)  \circ (z, \theta) = (z + z_0, \theta + \theta_0 + \frac{ z_0 \bar z-  \bar z_0 z }{2i} ). \]
\el
\bpf
\[ (L_{(z_0, \theta_0)}^* \alpha)|_{(z, \theta)} = d (\theta + \theta_0 + \frac{\bar z z_0 - \bar z_0 z}{2i} ) + \frac{i}{2} \sum_j ((z_j + z_{0j}) d \bar z_j - (\bar z_j+\bar z_{0j}) d z_j) = \alpha|_{(z, \theta)}.\]
\epf

In particular, $\Hb^m_{red}$ preserves the volume form $\alpha \wedge (d\alpha)^m/m!$ on $X$, hence defines a unitary operator acting on the degree $k$ CR functions on $X$. 

The infinitesimal Heisenberg group action on $X$ can be identified with contact vector field generated by a linear Hamiltonian function $H: \C^m \to \R$. 
\bl \cite[Section 3.2]{ZZ17}\label{flow-lin}
For any $\beta \in \C^m$, we define a linear Hamiltonian function on $\C^m$ by
\[ H(z) = z \bar \beta + \beta \bar z. \]
The Hamiltonian vector field on $\C^m$ is  
\[ \xi_H = - i \beta \pa_z + i \bar \beta \pa_{\bar z}, \]
and its contact lift is 
\[ \h \xi_H = - i \beta \pa_z + i \bar \beta \pa_{\bar z} -\half( z \bar \beta + \beta \bar z) \pa_\theta. \]
The time $t$ flow $\h g^t$ on $X$ is given by left multiplication 
\[ \h g^t(z,\theta) = (-i \beta t, 0) \circ (z,\theta) = (z-i\beta t, \theta - t\Re(\beta \bar z)). \]
\el

\subsection{\label{METASECT} Metapletic Representation}
Let $\R^{2m}, \omega = 2 \sum_{j=1}^m dx_j \wedge d y_j$ be a sympletic vector space. The space $Sp(m, \R)$ consists of linear transformation $S: \R^{2m} \to \R^{2m}$, such that $S^*\omega = \omega$. In coordinates, we write 
\[ \bma x' \\ y' \ema = S \bma x \\ y \ema = \bma A & B \\ C & D \ema \bma x \\ y \ema. \]
In complex coordinates $z_i = x_i + i y_i$, we have then 
\[ \bma z' \\ \bar z' \ema = \bma P & Q \\ \bar Q & \bar P \ema \bma z \\ \bar z \ema =: \acal \bma z \\ \bar z \ema, \]
where 
\begin{equation} \label{PQDEF}  \bma P & Q \\ \bar Q & \bar P \ema = \wcal^{-1} \bma A & B \\ C & D \ema \wcal, \quad \wcal = \frac{1}{\sqrt 2} \bma I & I \\ -i I & iI \ema. \end{equation}
The choice of normalization of $\wcal$ is such that $W^{-1} = W^*$.Thus, 
\[ P = \half(A+D + i (C-B)). \]
 We say such $ \acal \in Sp_c(m, \R) \subset M(2n,\C)$. The following identities are often useful.
\bpp [ \cite{F89} Prop 4.17]
Let $ \acal= \bma P & Q \\ \bar Q & \bar P \ema \in Sp_c$, then \\
(1) $ \bma P & Q \\ \bar Q & \bar P \ema^{-1} =\bma P^* & -Q^t \\ -Q^* & P^t \ema = K  \acal^* K$, where $K =  \bma I & 0 \\ 0 & -I \ema.$ \\
(2) $ PP^* - QQ^* = I$ and $P Q^t = Q P^t$. \\
(3) $P^*P - Q^t \bar Q = I$ and $P^t \bar Q = Q^* P$. 
\epp

The (double cover) of $Sp(m,\R)$ acts on the (downstairs) BF space $\hcal_k$ via kernel: given $M= \bma P & Q \\ \bar Q & \bar P \ema \in Sp_c$, we have
\[  \kcal_{k,M}(z,  w) = \kk^{m} (\det P)^{-1/2} \exp \left\{k \half \left( z \bar{Q} P^{-1} z + 2 \bar{w} {P}^{-1} z
- \bar{w} P^{-1} Q \bar w \right)  \right\} \]
where the ambiguity of the sign the square root $(\det P)^{-1/2}$ is determined by the lift to the double cover. When $ \acal=Id$, then $\kcal_{k, \acal}(z, \bar w) = \Pi_k(z, \bar w)$. Similarly, we have the kernel upstairs on $X$ as 
\be \h \kcal_{k, \acal}(\h z, \h w) = \kcal_{k,M}(z, \bar w) e^{k(i\theta_z -|z|^2/2) + k(-i\theta_w - |w|^2/2)}. \label{hatK}\ee

A quadratic Hamiltonian function $H: \C^m \to \R$ will generates a one-parameter family of symplectic linear transformations $ \acal_t = g^t: \C^m \to \C^m$. However, $ \acal_t$ is only $\R$-linear but not $\C$-linear, i.e. $M_t$ does not preserve the complex structure of $\C^m$. Hence, one need to orthogonal project back to holomorphic sections. To compensate for the loss of norm due to the projection, one need to multiply a factor $\eta_{ \acal_t}$. This is in the spirit of Proposition \ref{SC}. 

\bpp \label{toep-met}
Let $ \acal: \C^m \to \C^m$ be a linear symplectic map, $\acal =  \bma P & Q \\ \bar Q & \bar P \ema$, and let $\h  \acal: X \to X$ be the contact lift that fixes the fiber over $0$, then 
\[  \h \kcal_{k, \acal}(\h z, \h w) = (\det P^*)^{1/2} \int_X \h \Pi_k(\h z, \h  \acal \h u) \h \Pi_k(\h u, \h w) d \Vol_X(\h u) \]
\epp
\bpf
The contact lift $\h \acal: \C^m \times S^1 \to \C^m \times S^1$ is given by $\acal$ acting on the first factor:
\[ \h \acal: (z, \theta) \mapsto (P z + Q \bar z, \theta), \]
one can check that $\h \acal^* \alpha = \alpha$. The integral over $X$ is a standard complex Gaussian integral, analogous to \cite[Prop 4.31]{F89}, and with determinant Hessian $1/|\det P|$, hence we have $(\det P^*)^{1/2}/|\det P| = (\det P)^{-1/2}$. 
\epf

\subsection{\label{TOEPMETA}Toeplitz construction of the metaplectic representation}

  As in \cite{Dau80},  the metaplectic representation  $W_J(S)$ 
of  $S \in Mp(n,\R)$ on $\hcal_J$  can also be constructed by the Toeplitz approach.  First, let
$U_S$ be the unitary translation operator  on $L^2(\R^{2n}, d L)$ defined  by $U_S F(x, \xi): = F(S^{-1}(x, \xi))$. The metaplectic representation of $S$ on $\hcal_J$
is given by (\cite{Dau80},(5.5) and (6.3 b)) 
\begin{equation} \label{eta}W_J(S) = \eta_{J,S} \Pi_J U_S \Pi_J, \\
\end{equation} where  we define (see 
  \cite{Dau80} (6.1) and (6.3a)), 
\begin{equation} \label{ETAJS} \begin{array}{lll}
\eta_{J,S} & = & 2^{-n}  \det (I - i J) + S (I + i J)^{\half}

 \end{array} \end{equation} and $\Pi_J$ is the Bargmann-Fock
Szeg\"o projector \eqref{PIBF}.  

Also define  $\beta_{J, SJS^{-1}}  =  2^{-n/2} [\det (SJ + JS) ]^{1/4}$.
Then,
$|\eta_{J,S}| = \beta_{J, SJS^{-1}}$. In fact (see \cite{Dau80}, above (6.3a), and (B6))
$$|2^{-n}  \det (I - i J) + S (I + i J)^{\half}| = [\det (SJ + JS) ]^{1/2} = 2^n \beta^2_{J, SJS^{-1}}. $$

We further record the identities,
\begin{equation}
\det (SJ + JS) = \det (I + J^{-1} S^{-1} J S) = \det (I + S^* S).
\end{equation}

The following identity gives another explanation of the presence
of $(\det P_n)^{-\half}$  in \eqref{gcalndef}.

\begin{lem} \label{KEYID2} (see \cite{Dau80}, p. 1388)

\begin{equation} \begin{array}{lll} \eta_{JS} \beta^{-2}_{J, SJS^{-1} } 
& = & (\eta_{J S})^{*-1} = \eta_{JS}\; 2^n  (\det (I + S^* S))^{-\half} 
 \end{array}\end{equation}
 and
(cf. \cite{Dau80}, p. 1388,
\begin{equation}\label{DAUB}  \begin{array}{l}(\eta^*_{J, S})^{-1} = \det ((I + i J) + S (I - i J)) = 2^n \det(A + D + i (B - C)) = \det P^*.
  \end{array}  \end{equation}

\end{lem}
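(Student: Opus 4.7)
The lemma consists of two chains of identities, both of which are algebraic consequences of the definitions. The first chain,
$\eta_{J,S}\,\beta^{-2}_{J,SJS^{-1}} = (\eta^*_{J,S})^{-1} = \eta_{J,S}\cdot 2^n(\det(I+S^*S))^{-1/2}$,
is pure scalar algebra. For the first equality I use the relation $|\eta_{J,S}|^2 = \beta^2_{J,SJS^{-1}}$ displayed immediately above the lemma: since $\eta_{J,S}$ is a complex number, this reads $\eta \cdot \eta^* = \beta^2$, so dividing both sides by $\beta^2\eta^*$ yields $\eta/\beta^2 = 1/\eta^* = (\eta^*)^{-1}$. The second equality then follows by substituting $\beta^2 = 2^{-n}[\det(SJ+JS)]^{1/2}$ from the definition of $\beta$, and invoking the already-displayed identity $\det(SJ+JS) = \det(I+S^*S)$ (which itself comes from $S^TJS=J$ and hence $J^{-1}S^{-1}JS = S^TS = S^*S$).

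The second chain, $(\eta^*_{J,S})^{-1} = \det((I+iJ)+S(I-iJ)) = 2^n\det(A+D+i(B-C)) = \det P^*$, is the substantive part. For the first equality I start from the defining formula \eqref{ETAJS}: reading it as $\eta_{J,S} = 2^{-n/2}[\det((I-iJ)+S(I+iJ))]^{1/2}$ (up to the branch of the square root), taking complex conjugate and inverse, and then using the already-established $\eta\eta^* = \beta^2$ to resolve the half-power against $\beta$, I arrive at $(\eta^*)^{-1} = \det((I+iJ)+S(I-iJ))$ modulo the overall normalization $2^n$. For the second equality I diagonalize $J$: since $J^2=-I$, the complexification $V\otimes\C$ splits as $H^{1,0}_J\oplus H^{0,1}_J$ on which $J$ acts as $+i$ and $-i$ respectively, so $I+iJ = 2\bar P_J$ kills $H^{1,0}_J$ and is $2I$ on $H^{0,1}_J$, while $I-iJ=2P_J$ is the mirror image. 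Writing $S$ in the complexified block basis as $\begin{pmatrix} P & Q\\ \bar Q & \bar P\end{pmatrix}$ from \eqref{PQDEF}, the operator $(I+iJ)+S(I-iJ)$ becomes block-triangular with diagonal blocks $2P$ and $2I$, so the determinant collapses to $2^{2n}\det P$. For the final equality I substitute $\bar P = \tfrac12(A+D+i(B-C))$, which follows from $P=\tfrac12(A+D+i(C-B))$ in \eqref{PQDEF} by complex conjugation, and use $\det P^* = \det \bar P^T = \det\bar P$ for square matrices.

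The main obstacle is not conceptual but purely bookkeeping: tracking the various factors of $2^n$ that arise from the projections $\tfrac12(I\pm iJ)$ versus $(I\pm iJ)$, choosing the branch of the half-power in the definition of $\eta_{J,S}$ consistently so that the squaring argument recovers $(\eta^*)^{-1}$ rather than $\pm(\eta^*)^{-1}$, and carefully distinguishing between $\det P$ and $\det P^*=\det\bar P$ (which agree only when $\det P$ is real). None of these steps requires any analysis beyond linear algebra of the symplectic group $Sp(m,\R)$ and the complexification map $\wcal$; the dynamical content of the paper enters only through the input $S = Dg^{nT_z}_z$, which plays no role in the proof of this purely algebraic identity.
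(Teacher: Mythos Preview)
Your approach to the first chain is correct and matches the paper exactly: the paper's proof cites Daubechies for $\eta\,\beta^{-2}=(\eta^*)^{-1}$ and then observes that the second equality is just the substitution $\beta_{J,SJS^{-1}}=2^{-n/2}(\det(I+S^*S))^{1/4}$, which is precisely what you do (deriving the first equality from $|\eta|=\beta$, i.e.\ $\eta\eta^*=\beta^2$, is the same content).

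For \eqref{DAUB} the paper gives no proof at all; it simply attributes the formula to \cite{Dau80}. Your block-triangularization in the $\wcal$-basis is the natural way to prove it, and the computation you sketch is correct: in that basis $I\pm iJ$ become $\begin{pmatrix}2I&0\\0&0\end{pmatrix}$ and $\begin{pmatrix}0&0\\0&2I\end{pmatrix}$, so $(I+iJ)+S(I-iJ)$ becomes $\begin{pmatrix}2P&0\\2\bar Q&2I\end{pmatrix}$ with determinant $2^{2n}\det P$. Your caution about the $2^n$ bookkeeping is well placed: the chain as printed in the paper is not internally consistent (for instance $2^n\det(A+D+i(B-C))=2^n\det(2\bar P)=2^{2n}\det P^*$, not $\det P^*$), so the discrepancies you anticipate are in the statement rather than in your argument.
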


\begin{proof} The first equality is proved on p. 1388 of \cite{Dau80}. The second asserts that
\begin{equation} \label{BETA} \beta_{J, SJS^{-1} } = 2^{-n/2} (\det (I + S^* S))^{\frac{1}{4}}, \end{equation}
which follows from \eqref{ETAJS} and identity (ii) above.

\end{proof}

\begin{cor} \label{Uinv}$\eta_{J, U S U^{-1}} = \eta_{J, S}$ where $U \in U(m)$. \end{cor}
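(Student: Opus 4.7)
The plan is to use the explicit formula \eqref{ETAJS} for $\eta_{J,S}$ combined with the fact that $U \in U(m) = Sp(m,\R)\cap O(2m,\R)$ commutes with the complex structure $J$. Once this observation is in hand, the proof reduces to the conjugation-invariance of the determinant.

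First I would note that $UJ = JU$ implies $U^{\pm 1}(I \pm iJ) = (I \pm iJ)U^{\pm 1}$, which allows me to rewrite
$$(I - iJ) + (USU^{-1})(I + iJ) \;=\; U(I - iJ)U^{-1} + U S (I + iJ) U^{-1} \;=\; U\bigl[(I - iJ) + S(I + iJ)\bigr]U^{-1}.$$
Taking determinants, the conjugation drops out, so the quantity inside the square root defining $\eta_{J,USU^{-1}}$ agrees with the one defining $\eta_{J,S}$.

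The only subtle point, which I expect to be the main (minor) obstacle, is the choice of sign for the square root, since $\eta_{J,S}$ is pinned down by a specific lift to the metaplectic double cover $Mp(m,\R)$. This is handled by using that $U(m)$ is path-connected: any $U$ can be joined to the identity by a path $U_t \subset U(m)$, and the family $U_t S U_t^{-1}$ lifts continuously to $Mp(m,\R)$, producing a continuous branch $\eta_{J, U_t S U_t^{-1}}$ whose values at $t=0$ and $t=1$ both realize the same square root of the (constant in $t$) determinant computed above. Alternatively, one can appeal to the identity $(\eta^*_{J,S})^{-1} = \det P^*$ from \eqref{DAUB}: conjugation by $U$ sends the holomorphic block $P = \pi^{1,0}\wcal S \wcal^{-1}\pi^{1,0}$ to $VPV^*$, where $V \in U(m)$ is the $\C$-linear form of $U$ under $\wcal$; since $\det(VPV^*) = \det P$, the corollary follows without any sign ambiguity.
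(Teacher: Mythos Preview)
Your proof is correct and follows the same route as the paper: both use that $U \in U(m)$ commutes with $J$ to rewrite $(I - iJ) + (USU^{-1})(I + iJ)$ as a conjugate of $(I - iJ) + S(I + iJ)$, and then invoke conjugation-invariance of the determinant. The paper's argument is the one-line version of this; your treatment of the square-root sign via the connectedness of $U(m)$ (or via the $\det P^*$ identity \eqref{DAUB}) is more careful than the paper, which does not address the branch choice at all.
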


\begin{proof} This follows  from replacing $S$ by $USU^{-1}$ and using that $U J = J U$. \end{proof}

\subsection{\label{OBFSECT} Osculating Bargmann-Fock space}

In this subsection, we first define the osculation Bargmann-Fock space for any fixed point $z \in M$, using the triple $(T_z M, \omega_z, J_z)$. Then, we define the preferred local coordinates in a neighborhood $U$ of $z$ and a preferred frame section $e_L$ of $L$ over $U$, which together determines a coordinate system of the circle bundle $X|_U$ over $U$. In these special coordinate, the Boutet-\Sj  phase can be approximated by the Bargmann-Fock-Heisenberg phase function modulo cubic order terms.

\begin{defin} \label{OSCBFDEF} 
	
	Given a  point $x \in X_h$ (resp. $z \in M$), we define the {\it osculating Bargmann-Fock
		space} at $x$ (resp. $z$) to be the Bargmann-Fock space of $(H_{x}X, J_{x}, \omega_{x})$ resp. $(T_z M, J_z, \omega_z)$. We denote 
	it by  $\hcal_{J_{x}, \omega_{x}}$ (resp. $\hcal_{J_z, \omega_z}$). \end{defin}

If $z$  is a periodic point for $g^t$, 
let $\gamma = \bigcup_{0 \leq s \leq t} g^s z$ be the corresponding
closed geodesic, 
and we may apply the metaplectic representation to define $W_{J_z}(D g^t|_z)$
as a unitary operator on $\hcal_{J_z, \omega_z}$.   There is a square root ambiguity
which can be resolved as in \cite{Dau80} but for our purposes it is not very important and for brevity we omit it from the discussion.

\begin{defin}\label{KDEF}  Let $p \in M$.  A coordinate system $(z_1,\dots,z_m)$ on a
	neighborhood $U$ of $p$ is called {\it K-coordinates\/} at $p$ if
	$$i \sum_{j=1}^m d z_j \wedge d\bar z_j =\omega|_{p} \,.$$
	Let $e_L$ be a local frame and let $\phi(z) = - \log ||e_L(z)||^2_{h}$, 
	if in a K-coordinates
	\begin{equation} \label{phiK} \phi(z) = |z|^2 + \sum_{ J K} a_{ J K} z^J \bar{z}^K, \;\;{\rm with}\; |J| \geq 2, |K| \geq 2. \end{equation}
	then $e_L$ is called a K-frame. 
\end{defin}

K-coordinates are defined by Lu-Shiffman in Definition 2.6 of \cite{LuSh15}. Existence of K-coordinates and K-frames are proved in \cite{LuSh15} (Lemma 2.7). Further, in K-coordinates,
\begin{equation} \label{omega} \omega = \omega_0 + \sum_{i j k \ell} R_{i j k \ell} z_i \bar{z}_j dz_k \wedge d \bar{z}_{\ell} + \cdots,
\;\; \omega_0 = \sum_j dz_j \wedge d \overline{z_j}. 
\end{equation}

The K-frame and K-coordinates together give us `Heisenberg
coordinates':

\begin{defin} A  {\it Heisenberg coordinate chart\/} at a point $x_0$ in
	the principal bundle $X$ is a coordinate chart
	$\rho:U \to
	V$ with $0\in U\subset \C^m\times S^1$ and $\rho(0)=x_0\in V\subset X$ of the
	form
	\begin{equation}\rho(z_1,\dots,z_m,\theta)= e^{i\theta} \frac{
		e^*_L(z)}{||e^*_L(z)||_{h^k}} \,,\label{coordinates}\end{equation} where
	$e_L$ is a preferred local frame for $L\to M$ at $P_0=\pi(x_0)$, and
	$(z_1,\dots,z_m)$ are K-coordinates centered at $P_0$.
	(Note that $P_0$ has coordinates $(0,\dots,0)$ and $e_L^*(P_0)=x_0$.)
\end{defin}

In these coordinates, the Boutet-\Sj phase $\psi(x,y)$ may be approximated 
modulo cubic remainder terms by
the  Bargmann-Fock-Heisenberg  phase \eqref{BF-phase}.

The lifted \szego kernel is shown in \cite{ShZ02}   and in Theorem 2.3 of \cite{LuSh15} to have the scaling asymptotics,
\begin{theo} \label{SHLU} Let $P_0\in M$ and choose a Heisenberg coordinate chart about $P_0$.
	\[k^{-m} \h \Pi_{h^k}(\frac{u}{\sqrt{k}}, \frac{\theta_1}{k}, \frac{v}{\sqrt{k}}, \frac{\theta_2}{k}) = \hat{\Pi}^{T_z M}_{h_z, J_z}(u, \theta_1,  v, \theta_2) \left(1 + k^{-1} A_1(u, v, \theta_1, \theta_2) + \cdots \right),
	\]
	where $\Pi^{T_z M}_{h_z, J_z}$ is the osculating Bargman-Fock \szego kernel  for $k=1$ and for the tangent space $T_z M \simeq \C^m$
	equipped with the complex structure $J_z$ and Hermitian metric $h_z$. 
\end{theo}
Here we identify the coordinates $(u, \theta_1, v, \theta_2)$ with linear coordinates on $T_z M \times S^1 \times T_z M
\times S^1$.

\section{ Proof of Theorem \ref {PikfTH}  }

In this section we study the rescaled Weyl sum
  \[ \Pi_{k,f}^E(z,z) :=  \sum_{j} f(k(\mu_{k,j} - E)) \Pi_{k,j}(z,z). \]
  Our purpose is to prove Theorem \ref{PikfTH} . By comparison with interface
  asymptotics \cite{ZZ17}, we now need to consider the Hamiltonian flow for long
  times. 
  
  The main idea of the proof is that aside from the holonomy factor (the value of the phase at the critcal point), the data of the principal term  in Theorem \ref{PikfTH} localizes at the periodic point. That is, the data come from the derivative of the first return map and do not involve data along the orbit. Too see this,   
we use  the quadratic Taylor  approximation of the phase $\psi(x, \hat{g}^t y) +\psi( y, x)$ in $(t, y)$  around a periodic point $(T, x)$. First, we approximate the phase $\psi$ by its osculating Bargmann-Fock approximation  $\psi_0$ at $x$. Further we approximate $\hat{g}^t$ by its linear approximation   $D \hat{g}^t$. 
We also need to  determine the quadratic appoximation to 
 the holonomy term of the phase coming from the $\theta$ variable. This part of the calculation is apriori non-local. But we  show in Proposition \ref{phase-quad} 
that the Hessian of the holonomy term $\hat{\theta}_w(T)$  vanishes at the periodic point. After these Taylor approximations, the calculation is essentially reduced to the linear Bargmann-Fock case of Section \ref{BF}.

\subsection{\label{MAINPROOFSECT} Stationary Phase Integral expression}
  Let $z \in M$ and $x \in X$ such that $\pi(x)=z$. Let $f \in \scal(\R)$ with Fourier transform $\h f(t) = \int f(x) e^{i t x} \frac{dx}{2\pi}$ compactly supported. We combine the definition \eqref{UktPARA}  with two compositions of the Boutet de Monvel-Sjoestrand parametrix \eqref{SZEGOPIintroa} to get
 \bea
  \Pi^E_{k, f}(z) &=& \int_{\R} \hat{f}(t) e^{- it k E} \hat{U}_k(t, x,x ) dt \\
   &=&  \int_{\R}\int_X \int_{S^1}\int_{S^1}\int_{\R_+}\int_{\R_+} \hat{f}(t) e^{k \Psi(t, x, y, \sigma_1, \sigma_2, \theta_1, \theta_2)} A_k d\sigma_1 d\sigma_2 d \theta_1 d\theta_2 dy dt + O(k^{-\infty}). \label{int-1}
  \eea
  where the phase function is given by,
  \be \Psi(t, x, y, \sigma_1, \sigma_2, \theta_1, \theta_2) = -i t E + \sigma_1 \h \psi(r_{\theta_1}x, \h g^t y) +  \sigma_2 \h \psi(r_{\theta_2}y,x) - i \theta_1 - i \theta_2
  \label{phase}
  \ee
  and $A_k$ is a semi-classical symbol. We consider the critical points and the determinant of the Hessian matrix of the phase. 
 
  We will work with a K-coordinate and K-frame in a neighborhood $U$ of $z$. In this coordinate, $z = (0,...,0) \in \C^m$, $x = (0,\cdots, 0; 0) \in \C^m \times S^1$, and $y = (w; \theta_w) \in \C^m \times S^1$. We denote $\h g^t y = (w(t); \theta_w(t))$. Since $\theta_w(t)-\theta_w$ only depends on $w,t$ but independent of $\theta_w$, then
  we define the holonomy phase for flow $\h g^t$: 
  \be \h \theta_w(t) := \theta_w(t)-\theta_w. \label{hol-cont}\ee
  Similarly, the holonomy phase $ \theta^h_w(t)$ for the horizontal flow $\exp(t \xi_H^h)$ is denoted by
  \be \exp(t \xi_H^h)(w; \theta_w) = (g^t w; \theta_w + \theta_w^h(t)). \label{hol-hor}\ee
  Note that $\h \theta_w(t)$ depends on $H$, where as $\theta_w(t)$ only depend on $H$ modulo constant,or $dH$.

  \bpp
  Fix a K-coordinate and K-frame in a neighborhood $U$ at $z$. 
  Let $\chi: M \to \R$ be a smooth cut-off function supported in $U$ and constant equals to one near $z$. Then we have
  \be \Pi_{k,f}^E(z) = \int_{\R}\int_M \int_{S^1}\int_{S^1}\int_{\R_+}\int_{\R_+} \hat{f}(t) e^{k \Psi'(t,w,\sigma_1,\sigma_2,\theta_1,\theta_2) }  \chi(g^t w) \chi(w) S_kd\sigma_1 d\sigma_2 d \theta_1 d\theta_2 dw dt + O(k^{-\infty}).  \label{int-2}\ee
  where
   \be \Psi'(t,w,\sigma_1,\sigma_2,\theta_1,\theta_2) = -i t E + \sigma_1(i  \theta_1 -  i \h \theta_w(t) - \varphi(w(t)) +  \sigma_2 (i  \theta_2 - \varphi(w) ) - i  \theta_1 - i \theta_2. \label{phase2} \ee
   
  \epp
  \bpf
  Introducing the cut-off function $\chi$ in the integral \eqref{int-1} only changes the integral by $O(k^{-\infty})$. Within the support of the cut-off function, we may use the K-coordinates. 
  
  Then phase function $\Psi$ can be written as (within the coordinate patch): 
  \bea \Psi &=& -i t E + \sigma_1 (i \theta_1 - i \h \theta_w(t) - i \theta_w + \psi(0,w(t)) - \varphi(w(t))  \\
  && + \sigma_2 (i \theta_2 + i \theta_w + \psi(w,0) - \varphi(w) ) - i \theta_1 - i \theta_2 \\
  &=& -i t E + \sigma_1(i \wt \theta_1 -  i \h \theta_w(t) - \varphi(w(t)) +  \sigma_2 (i \wt \theta_2 - \varphi(w) ) - i \wt \theta_1 - i \wt \theta_2 
  \eea
  where $\wt \theta_1 = \theta_1 - \theta_w$ and $\wt \theta_2 = \theta_2+\theta_w$. We note $\psi(0,w)=0$ due to the choice of K-frame \eqref{phiK}.  After the change of variables, we see the phase $\Psi$ does not depend on $\theta_w$. Hence we may perform the $\theta_w$ integral, and rewrite $\wt \theta_i$ as $\theta_i$, to get the reduced phase function $\Psi'$. 
   \epf
    
%


\bpp \label{crit}
The critical points for $\Psi'$ \eqref{phase2} are as following: \\
(1) If $z \notin H^{-1}(E)$, there is no critical points. \\
(2) If $z \in H^{-1}(E)$ but $z \notin \pcal_E$, then the only critical point corresponds to $t=0$. \\
(3) If $z \in H^{-1}(E)$ and $z \in \pcal_E$, then for each $n \in \Z$, there is a critical point with $t = n T_z$, where $T_z$ is the primitive period of $g^t$ at z. 
\epp
\bpf
We will prove that the critical points for $\Psi'$ \eqref{phase} are given by
\[ w=0,\; w(t)=0,\; \sigma_1=\sigma_2=1,\; \theta_1=\h \theta_0(t),\; \theta_2 = 0.   \]

Taking derivatives of $\sigma_1$ and $\sigma_2$, we need to have
\[  i  \theta_1 -  i \h \theta_w(t) - \varphi(w(t) =0, \quad  i  \theta_2 - \varphi(w)  = 0. \]
Hence
\be \theta_1=\h \theta_0(t),\; \theta_2 = 0,  \label{sigma-cond} \ee
Thus, we may work in a neighborhood of $x$ from now on.

Taking derivatives in $\theta_1$ and $\theta_2$ and setting them to zero, we get
\[ \sigma_1 = 1, \quad \sigma_2 = 1.\]

Taking derivative in $t$ and setting it to zero, we have 
\[ \frac{\pa \Psi'}{\pa t} = -i E + i \sigma_1 \frac{d \h \theta_w(t)}{d t} = -i (E - \sigma_1 H(0)).  \] 
Thus, using $\sigma_1=1$, we have
$E = H(0).$

Finally, taking derivatives  in $w$, we have
\[ \frac{\pa \Psi'}{\pa w} = -i \sigma_1 \pa_w \h \theta_w(t) = - i \pa_w \theta_w(T) \]
where $T$ is a period. Since $\h g^T$ preserves horizontal space, and $\pa_w$ is in the horizontal space at $x=(0;0)$, hence 
\[ \pa_w \theta_w(T) = \la \alpha|_x,  (\h g^T|_x)_* \pa_w \ra = \la \alpha|_x,  \pa_w \ra = \la d\theta,  \pa_w \ra = 0. \]
\epf

\subsection{Determinant of Hessian of $\Psi'$}
Let $T$ be a period of $g^t$ at $z$ (possibly zero). To compute the contribution at $t=T$, we will do a slight change of variables. 
\bl
Define new integration variables
\[ t = T+ t', \;w = g^{-t'} w', \; \theta_1 = \theta_1' - \h \theta_{w'}(-t')  , \; \theta_2 = \theta_2' + \h \theta_{w'}(-t').\]
Then the Jacobian factor is $1$, and the 
phase function $\Psi_T$ in the new variables is
\be \label{phase3} \Psi_T(t', w', \sigma_i,\theta_i')  = -i (T+t') E +   \sigma_1(i  \theta_1' -  i \h \theta_{w'}(T) - \varphi(w'(T)) +  \sigma_2 (i  \theta_2' + \h \theta_{w'}(-t') - \varphi(w'(-t')) ) - i  \theta_1' - i \theta_2'.\ee
(We will drop the prime from now on.)
\el
\bpf
The Jacobian matrix is block-upper-triangular, with the $w-w'$ block having determinant $1$, 
since $g^t$ preserves the volume form.

The holonomy for flow $\h g^t$ can be written as 
\[ \h \theta_w(t) = \theta_w(t) - \theta_w(0) = \theta_{w'}(T) - \theta_{w'}(-t') = \h \theta_{w'}(T) - \h \theta_{w'}(-t'). \] 
\epf

\begin{lem}
The Hessian matrix for $\Psi_T(t,w,\sigma_i, \theta_i)$ at $t=0,w=0, \sigma_i=1, \theta_1=\h \theta_0(T), \theta_2=0$ is as
\be
Hess\Psi_T =  \kbordermatrix{ & \sigma_1 & \theta_1 & \sigma_2 &\theta_2 & t & w \\
	\sigma_1 & 0 & i & 0 & 0 & 0 & 0 \\
	\theta_1 & i & 0 & 0 & 0 & 0 & 0 \\
	\sigma_2 & 0 & 0 & 0 & i & 0 & 0 \\
	\theta_2 & 0 & 0 & i & 0 & 0 & 0 \\
	t & 0 & 0 & 0 & 0 & \pa_{tt}\Psi_{T} & \pa_{tw}\Psi_{T} \\
	w & 0 & 0 & 0 & 0 & \pa_{wt}\Psi_{T} & \pa_{ww}\Psi_{T} }.
\ee 
In particular, at this critical point, we have
\[ \det Hess\Psi_T = \det \bma  \pa_{tt}\Psi_{T} & \pa_{tw}\Psi_{T} \\\pa_{wt}\Psi_{T} & \pa_{ww}\Psi_{T}\ema. \]
\end{lem}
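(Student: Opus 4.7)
The approach is to exploit the fact that $\Psi_T$ in \eqref{phase3} is affine in each of the four auxiliary variables $\sigma_1, \sigma_2, \theta_1, \theta_2$ separately. As a consequence the $4\times 4$ block of second partials in these variables has vanishing $(\sigma,\sigma)$ and $(\theta,\theta)$ sub-blocks, and only the mixed $(\sigma_i,\theta_j)$ entries survive. Reading off the coefficient of $\sigma_i \theta_j'$ in \eqref{phase3} gives $\partial_{\sigma_i \theta_j}\Psi_T = i\,\delta_{ij}$, which produces the anti-diagonal $4\times 4$ upper-left block displayed in the lemma, whose determinant equals $1$.

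Next I would verify entry by entry that the $4\times 2$ block of cross-derivatives coupling $\{\sigma_i,\theta_j\}$ to $\{t,w\}$ vanishes at the critical point. Since $\partial_{\theta_j}\Psi_T = i\sigma_j - i$ depends on neither $t$ nor $w$, both $\partial_{\theta_j t}\Psi_T$ and $\partial_{\theta_j w}\Psi_T$ vanish identically. Likewise $\partial_{\sigma_1}\Psi_T = i\theta_1' - i\hat\theta_{w'}(T) - \varphi(w'(T))$ carries no $t$-dependence in \eqref{phase3}, so $\partial_{\sigma_1 t}\Psi_T \equiv 0$; and $\partial_{\sigma_1 w}\Psi_T$ at the critical point is exactly the $w$-component of the first-order critical-point equation, which vanishes by the argument in Proposition \ref{crit}, using that $\hat g^T$ preserves the horizontal subspace at $x$, that $d\varphi|_0 = 0$ in K-coordinates, and that $g^T(0)=0$ by periodicity.

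For the remaining $\sigma_2$-row, differentiating $\sigma_2\bigl(i\hat\theta_{w'}(-t') - \varphi(w'(-t'))\bigr)$ in $t$ or $w$ and using $\hat\theta_0'(0) = -H(0) = -E$ together with $d\varphi|_0 = 0$, the resulting entries $\partial_{\sigma_2 t}\Psi_T$ and $\partial_{\sigma_2 w}\Psi_T$ can in fact be nontrivial at the critical point. I anticipate that this is the delicate step: to recover the claimed factorization of the determinant one likely needs a Schur-complement argument with $A$ the upper-left $(\sigma,\theta)$-block and $C$ the lower-right $(t,w)$-block. Because $B$ is supported only in the $\sigma_2$-row while $A^{-1}$ is itself anti-diagonal, a direct matrix multiplication yields $B^{\top} A^{-1} B = 0$; combined with $\det A = 1$ the Schur-complement formula then gives $\det(\mathrm{Hess}\,\Psi_T)|_{\mathrm{crit}} = \det\bma \partial_{tt}\Psi_T & \partial_{tw}\Psi_T \\ \partial_{wt}\Psi_T & \partial_{ww}\Psi_T \ema$, confirming the lemma. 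The main obstacle is thus to verify this algebraic cancellation rather than the strict block-diagonality of the Hessian itself.
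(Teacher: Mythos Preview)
Your computation is correct and in fact more careful than the paper, which simply omits the argument. You have put your finger on a genuine inaccuracy in the displayed matrix: the entry $\partial_{\sigma_2 t}\Psi_T$ at the critical point equals $iE$, not $0$. Indeed, from \eqref{phase3} one has
\[
\partial_{\sigma_2}\partial_{t'}\Psi_T\big|_{t'=0,\,w'=0}
=\; i\,\partial_{t'}\hat\theta_{0}(-t')\big|_{t'=0}-\partial_{t'}\varphi\bigl(g^{-t'}(0)\bigr)\big|_{t'=0}
=\; i\bigl(-\hat\theta_0'(0)\bigr)-0=\; iH(0)=iE,
\]
since $\hat\theta_0'(0)=\tfrac12\langle d^c\varphi,\xi_H\rangle|_0-H(0)=-E$ in K-coordinates. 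So the Hessian is not literally block-diagonal as written. (Your caution about $\partial_{\sigma_2 w}\Psi_T$ is unnecessary: at $t'=0$ one has $\hat\theta_{w'}(0)\equiv 0$ and $w'(0)=w'$, so this entry does vanish; but as you observe, the Schur argument does not need it.)

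Your Schur-complement rescue of the determinant formula is exactly right and is the honest proof of the second assertion. Because the only possibly nonzero cross-derivatives sit in the $\sigma_2$-row of $B$, while $A^{-1}$ swaps $\sigma_2\leftrightarrow\theta_2$, one gets $B^{\top}A^{-1}B=0$ and hence $\det(\mathrm{Hess}\,\Psi_T)=\det A\cdot\det C=\det C$. An equivalent (and perhaps more transparent) way to see this is successive cofactor expansion along the $\theta_1$- and $\theta_2$-rows, each of which has a single nonzero entry; this kills the $\sigma_i$-rows and columns regardless of what sits in the $(\sigma_2,t)$ slot, leaving exactly the $(t,w)$-block. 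Either way the conclusion the paper actually uses---the factorization of the determinant---stands.
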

\bpf
The calculation is very similar to that in the proof of Proposition \ref{crit}, and is therefore omitted. 
\epf

\subsection{Quadratic Approximation to the Phase}
To compute the Hessian of the phase function $\Psi_T$ in $t$ and $w$, suffice to set $\sigma_i,\theta_i$ to their critical value, and compute the Taylor expansion of $\Psi_T$ to second order. Thus, we get
\[ \Psi_T'(t,w) := -i (T+t) E  -  i \h \theta_{w}(T) - \varphi(w(T)) +   i \h \theta_{w}(-t) - \varphi(w(-t)).  \]
We will consider second order Taylor expansion in each term. We write $\simeq$ for equal modulo cubic order term.

Suppose $H$ has Taylor expansion 
\[ H(w) = E + (\alpha \bar{w} + w \bar \alpha) + O(|w|^2). \]
We define the corresponding $H_{BF}$ for the osculating BF space $\C^m \cong T_z M$, as the linear term of $H$:
\[ H_{BF}(w) = \alpha \bar{w} + w \bar \alpha. \]
We denote the BF model potential as $\varphi_{BF}(z) = |z|^2$. Let $\h g^t_{BF}$ be the flow generated by $H_{BF}$ on $X_{BF}=\C^m \times S^1$, such that
\[ \h g^t_{BF}(w; \theta_w) = (w(t)_{BF}; \theta_w + \h \theta_w(t)_{BF}). \]
Then, we have the following comparison result
\bpp
(1) $\h \theta_{w}(-t) -  t E = \h \theta_{w}(-t)_{BF} + O_3 = \half(\alpha \bar{z} + z \bar \alpha) t $. \\
(2) $\varphi(w(T)) = |D g^T w|^2+ O_3$  \\
(3) $\varphi(w(-t)) = |w(-t)_{BF}|^2+ O_3 = |w + i \alpha t|^2 + O_3$
\epp
\bpf
(1) $\h \theta_{w}(-t) = \int_{0}^t \half d^c \varphi (\xi_H) |_{w(s)} ds+ tH(w)$. Since $d^c\varphi|_{w} = O(|w|)$ and the integral interval is first order in $t$, hence
\[ \int_{0}^t \half d^c \varphi (\xi_H) |_{w(s)} ds = t \half d^c \varphi (\xi_H) |_{w} + O_3 = t \la \half d^c \varphi|_w, \xi_H|_0 \ra + O_3 = \int_{0}^t \half d^c \varphi_{BF} (\xi_{H_{BF}}) |_{w(s)} ds + O_3.  \]
And $tH(w) = t(E+H_{BF}(w)) + O_3$. Hence 
\[ \h \theta_{w}(-t) -  t E = \int_{0}^t \half d^c \varphi_{BF} (\xi_{H_{BF}}) |_{w(s)} ds + t(E+H_{BF}(w)) - tE + O_3 = \h \theta_{w}(-t)_{BF} + O_3. \]
Finally, we may use Lemma \ref{flow-lin} to compute the increment in $\theta$. 

(2) Since $\varphi(w) = |w|^2 + O(|w|^3)$ and $w(T) = g^T(w) = g^T(0) + D g^T w + O(|w|^2) = D g^T w + O(|w|^2)$, hence 
\[\varphi(w(T)) = |D g^T w|^2 + O_3 \]

(3) Since $\xi_H =  -i\alpha \pa_z + i \bar \alpha \pa_{\bar z}  + O(|z|)$, we have $w(-t) = w + i \alpha t + O_2$, hence
\[ \varphi(w(-t)) = |w + i \alpha t|^2 + O_3 = |w(-t)_{BF}|^2+ O_3. \]
\epf

\bpp \label{phase-quad}
\[ \h \theta_{w}(T) = \h \theta_{0}(T) + O(|w|^3). \]
\epp
\bpf
The  proof is rather long, so we break it up into the following two Lemmas.

\bl \label{phase-local}
There exists a neighborhood $V \subset U$ of $z$, such that for any $w \in V$, and any path  
$\gamma:[0,1] \to V$ from $z$ to $w$, we have 
\[  \h \theta_w(T) =\h \theta_{0}(T) -  \int_{\gamma} \half d^c \varphi + \int_{g^T(\gamma)} \half d^c \varphi.  \]
\el
\bpf
We only give proof for $T=n T_z$, $n>0$, the $n \leq 0$ case is analogous. 
Let $\{(U_i, e_i, \varphi_i)\}_{i=1}^n$ be a sequence of coordinate patch $U_i$, such that 
there exists a partition of $[0,T]$: $0=t_0 < t_1 < \cdots < t_n=T$, such that $U_i$ covers the $i$-th segment of the orbit  $O_i= \{g^s z \mid t_{i-1} \leq s \leq t_i \}$, and
$e_i \in \Gamma(U_i,L)$ are non-vanishing holomorphic sections, and
$e^{-\varphi_i} = \|e_i\|^2$. Without loss of generality, we may take $U_1=U$. We identify index $n+i$ with $i$. 

Since $g^{t_{i}} z \in U_i \cap U_{i+1}$ for $0 \leq i \leq n$, hence 
\[  z \in V := \bigcap_{i=0}^n g^{-t_{i}}  (U_i \cap U_{i+1}). \]
For any $w \in V$, let $\gamma:[0,1] \to V$ be a path from $z$ to $w$. Let 
\[  \gamma_0 = \gamma, \quad \gamma_i = g^{t_i} \gamma. \] Then 
\[ Im(\gamma_i) \subset U_i \cap U_{i+1}, \forall 0 \leq i \leq n.  \]

Over $U_i \cap U_{i+1}$, define transition function $g_i = \log(e_{i+1} / e_i)$, such that $g_i = a_i + \sqrt{-1} b_i$, with $b_i(g^{t_i} z) \in [0,2\pi)$. Then we have 
\[ \|e_{i+1}\| = |g_i| \|e_i \| \RA e^{-\half \varphi_{i+1}} = e^{a_i} e^{-\half \varphi_i}  \RA \varphi_{i+1} - \varphi_i = -2 a_i. \]

Over $U_i$, 
let $\theta_i = e_i^* / \|e_i^*\|$ be the section in the co-circle bundle $X$. Then over $U_i \cap U_{i+1}$, we have 
\[  \log(e^*_{i+1} / e_i^*) = 1/g_i = e^{-a_i - \sqrt{-1} b_i} \RA \theta_{i+1}  -  \theta_i \equiv - b_i \mod 2\pi. \]
where we used additive notation for section valued in $S^1$. 

Then, the holonomy can be expressed using Lemma \ref{gtform} in each coordinate patch $U_i$ 
\[  \h \theta_w(T) =\theta_w(T) - \theta_w = \sum_{i=1}^n \int_{t_{i-1}}^{t_i} \half \la d^c \varphi_i, \xi_H \ra|_{ g^{s} w} ds  - (t_{i+1} - t_i)H(w) + b_i(g^{t_i} w). \]
Thus, we may take the difference
\bea
\h \theta_w(T) - \h \theta_{0}(T) &=& \sum_{i=1}^n \int_{t_{i-1}}^{t_i}  \half \la d^c \varphi_i, \xi_H \ra |_{ g^{s} w} ds - \int_{t_{i-1}}^{t_i} \half \la d^c \varphi_i, \xi_H \ra|_{ g^{s}z} ds  - (t_{i+1} - t_i)(H(w) -H(z)) \\
&&+ \sum_{i=1}^n b_i(g^{t_i} w) - b_i(g^{t_i}z) \\ 
&=& \sum_{i=1}^n \int_{0}^{1} \int_{t_{i-1}}^{t_i} \omega(\pa_t, \pa_s) dt ds - (t_{i+1} - t_i)(H(w) - H(z)) \\
&& +\sum_{i=1}^{n} - \int_{\gamma_{i-1}} \half d^c \varphi_i +  \int_{\gamma_{i}} \half d^c \varphi_i 
+ \sum_{i=1}^n \int_{\gamma_i} db_i \\ 
&=& \sum_{i=1}^n \int_{0}^{1} \int_{t_{i-1}}^{t_i} dH(\pa_s) dt ds - (t_{i+1} - t_i)(H(w) - H(z)) \\
&& - \int_{\gamma_0} \half d^c \varphi_1 + \sum_{i=1}^{n} \int_{\gamma_i} \half d^c(\varphi_i - \varphi_{i+1}) + \int_{\gamma_n} \half d^c \varphi_{n+1} + \sum_{i=1}^n \int_{\gamma_i} db_i \\ 
&=&  - \int_{\gamma_0} \half d^c \varphi_1 +  \int_{\gamma_n} \half d^c \varphi_{n+1} + \sum_{i=1}^{n} \int_{\gamma_i}  (d^c a_i + d b_i) \\
&=& - \int_{\gamma_0} \half d^c \varphi_1 +  \int_{\gamma_n} \half d^c \varphi_{1}
\eea
where in the last step, we used 
\[ d^c(a_i + \sqrt{-1} b_i) = d(\sqrt{-1} a_i - b_i) \RA d^c a_i = - d b_i. \]
\epf

\bl
For any fixed path $\gamma:[0,1] \to U$ starting from $0$, and for any $1 \gg \epsilon>0$, we have
\[ \int_{\gamma([0,\epsilon])} d^c \varphi = \int_0^\epsilon \la d^c \varphi, \dot \gamma(s)\ra ds = O(\epsilon^3) \]
\el
\bpf
If a path $\gamma:[0,1] \to U$ with $\gamma(0)=0$ and $\gamma(1)=w$ is a straight-line, then 
\[
\int_{\gamma} d^c \varphi = O(|w|^3).
\]
Indeed, consider the Taylor expansion of $\varphi(z)$ at $z=0$, 
\[ \varphi(z) = |z|^2 + O(|z|^3) \]
then
\[ d^c \varphi = -2 \sum_i |z_i|^2 d \theta_i + \sum_i ( O(|z|^2) dz_i + O(|z|^2) d \bar{z}_i ). \]
However, along a straight line path from $0$ to $w$, $\theta_i$ is constant, hence the leading term of $d^c \varphi$ vanishes
in the integral. For the remainder term, we have $| \int_{\gamma} d z_i | = O( |w|)$, hence proving the claim. 
 
Next, we consider a general path as in the statement of the Lemma.
For each $\epsilon$, we may consider the straight-line path $\beta: [0,\epsilon] \to U$ from $0$ to $\gamma(\epsilon)$ . From the previous claim, we know $\int_{\beta(\epsilon)} d^c \varphi = O(\epsilon^3)$. Let 
\[ \Sigma_\epsilon: [0,\epsilon] \times [0,1] \to U, (t, u) \mapsto u \gamma(t) + (1-u) \beta(t). \]
Then, we may verify that
\[ \int_{\gamma([0,\epsilon])} d^c \varphi < C \left|\int_{\Sigma_\epsilon} \omega \right| + O(\epsilon^3)<  O(\epsilon^3). \]
where the estimate of $\int_{\Sigma} \omega = 2\sum_i \int_{\Sigma} dx_i \wedge d y_i$ can be done by noting for any smooth function $f$, 
\[ \int_0^\epsilon f(x) dx - \epsilon \half(f(0) + f(\epsilon)) = O(\epsilon^3). \] 
\epf

Using  above two lemma, we have
\[ \h \theta_{w}(T) = \h \theta_{0}(T) = -\int_{\gamma} \half d^c \varphi + \int_{g^T(\gamma)} \half d^c \varphi = O(|w|^3) + O(|g^T w|^3) = O(|w|^3). \]
This finishes the proof for Proposition \ref{phase-quad}.
\epf

\subsection{\label{BFREDUCTION} Reduction to osculating BF model}
We continue the calculation of the contribution to the stationary phase integral for period $T$ orbit. 
The reduced phase function $\Psi_T'(t,w)$ has the following expansion: 
\bea 
\Psi_T'(t,w) &=& - i T E - i \h \theta_{0}(T) + i t \Re (\alpha \bar w) -  |w+i \alpha t |^2/2 - |Dg^{T} w|^2/2 +O_3. \\
&=& - i T E - i \h \theta_{0}(T) + i w \bar{\alpha} t -  |w|^2/2 - |\alpha t |^2/2 - |Dg^{T} w|^2/2 +O_3. 
\eea
We may write the critical value as
\[ \Psi_T'(0,0) = \Psi_T|_{crit} = - i T E - i \h \theta_{0}(T) = -i \theta_0^h(T) \]
using holonomy phase of the horizontal flow \eqref{hol-hor}. 

The leading term of the stationary integral can be obtained by the following model result on BF space. 
\bpp
 Let $H = \alpha \bar z + z \bar \alpha$. Let $\acal: \C^m \to \C^m$ be a symplectic linear map, $\acal w = P w + Q \bar w$. Suppose $\xi_H$ is invariant under $\acal$. Then \\
(1)
\bea
&&(\det P^*)^{1/2} \kk^{2m}\int_{\C^m} e^{k( i t w \wb{\alpha} -  |w|^2/2 - t^2|\alpha|^2/2 - |\acal w|^2/2) } d \Vol_{\C^m}(w) \\&=&  \h \kcal_{k,\acal}((0;0), \h g^t(0;0)) \\&=&  (\det P)^{-1/2} \kk^{m} e^{-kt^2(   |\alpha|^2-\bar \alpha P^{-1}Q \bar \alpha )/2}
\eea
where the metaplectic representation kernel $\h \kcal_{k,\acal}(\h z, \h w)$ is defined in \eqref{hatK}.

(2)  \[ \int_{\R} \h \kcal_{k,\acal}((0;0), \h g^t(0;0)) dt = \kk^{m-1/2} (\det P)^{-1/2} (\bar \alpha P^{-1} \alpha) .\]
\epp
\bpf
(1)
 We note that
\[ \kk^{m} e^{k( - |\acal w|^2/2) } = \h \Pi_k(0, (\acal w;0)), \] 
and
\[   \kk^{m} e^{k( i t w \wb{\alpha} -  |w|^2/2 - t^2|\alpha|^2/2)} = \h \Pi_k(\h g^{-t}(w;0), 0) =  \h \Pi_k((w;0),\h g^{t}(0; 0)). \]
Hence by  Proposition \ref{toep-met}, we have
\bea 
&& (\det P^*)^{1/2} \kk^{2m}\int_{\C^m} e^{k( i t w \wb{\alpha} -  |w|^2/2 - t^2|\alpha|^2/2 - |\acal w|^2/2) } d \Vol_{\C^m}(w)  \\&=&  (\det P^*)^{1/2} \int_{\C^m} \h \Pi_k(0, (\acal w;0))  \h \Pi_k((w;0),\h g^{t}(0; 0)) d w \\&=&  \h \kcal_{k,\acal}((0;0), \h g^t(0;0)). 
\eea
And the last line follows by $\h g^t(0;0) = (-i \alpha t; 0)$ and definition for $\h \kcal_{k,\acal}$. 

(2)
Next, we  use the fact that $\xi_H$ is preserved by $\acal$, i.e. 
\be \bma -i \alpha \\ i \bar \alpha \ema = \bma P & Q \\ \bar Q & \bar P \ema \bma -i \alpha \\ i \bar \alpha \ema  \label{alpha-inv}\ee
Thus
\be \alpha = P \alpha - Q \bar \alpha  \label{alpha-inv-2} \ee
hence
\[ |\alpha|^2 - \bar \alpha P^{-1} Q \bar \alpha = |\alpha|^2 - \bar \alpha P^{-1}  ( P\alpha - \alpha) = \bar \alpha P^{-1} \alpha \]
Then, we have
\[  \kk^{m}  (\det P)^{-1/2} \int_\R  e^{ -k  \half t^2 ( \bar \alpha P^{-1} \alpha)} dt  =  \kk^{m-1/2}  (\bar \alpha P^{-1} \alpha)^{-1/2}  (\det P)^{-1/2} \]
\epf

Combining all the steps before, we have proven the following proposition. 
\bpp
Let $z \in M$ be a periodic point for the flow $\xi_H$ and $H(z)=E$, then 
\[  \Pi^E_{k,f}(z,z) = \sum_{n \in \Z} \h f(  n T_z) e^{- i k n\theta_z^h}  \gcal_n  \kk^{m-1/2}  (1+O(k^{-1})) \]
where if $D g^{nT_z}|_z$ in K-coordinate at $z$ can be written as $\bma P_n & Q_n \\
\bar Q_n & \bar P_n \ema$, then
\[ \gcal_{n} = (\det P_n)^{-1/2} (\bar \alpha P_n^{-1} \alpha)^{-1/2}.
\]
\epp

\section{Proof of Proposition \ref{HYPLEM}}

The issue at hand is the regularity of the measures $\mu_k^{z,1,E}$ defined
on test functions $f \in \scal(\R)$ with $\hat{f} \in C_0^{\infty}(\R)$ in
Theorem \ref{PikfTH}.  It is only an interesting question when $z \in \pcal_E$.
In this case, 
$$ \int_{\R} f \mu_{k}^{z,1,E} =  \kk^{m-1/2}  \sum_{n \in \Z} \hat{f}(n T_z)
\;\gcal_n(z) e^{-i n k \theta_{z}^h} + O(k^{m-3/2}). $$
Unravelling the Fourier transform gives that, in the sense of distributions,
$$d \mu_{k}^{z,1,E}(x) = \kk^{m-1/2}  \sum_{n \in \Z} e^{i n T_zx}
\;\gcal_n(z) e^{-i k n\theta_z^h} dx+ O(k^{m-3/2}). $$

The Proposition asserts first that this series converges absolutely and uniformly when the orbit through $z$ is real  hyperbolic. To prove this
we need to consider the behavior of the matrix element $\bar{\alpha} P_n^{-1} \alpha$ and the detereminant $\det P_n$ as $n \to \infty$, where as in \eqref{PDEF} 
\begin{equation} \label{PnDEF} P_n : =  P_J S^n P_J: T_z^{(1,0)} M \to T_z^{(1,0)} M. \end{equation}
We first  develop the symplectic linear algebra introduced in  Section \ref{SLA}.

\subsection{\label{MEPDSS}Matrix elements and determinants of positive definite symplectic matrices}

We are interested in $P_J S P_J$ with $P_J = \half(I - i J) $. We also
use the notation $\langle \alpha, \beta\rangle = \bar{\beta}^t \cdot \alpha$ for the sesquilinear inner product. 

First we prove

\begin{prop}\label{PROPME}  If $S$ is positive definite symmetric symplectic, with invariant vector $\xi$ and $\alpha = P_J \xi$,  
	and if the spectrum of $S$ is $\{e^{\lambda_j}, e^{-\lambda_j} \}_{j=1}^n$ with 
	$\lambda_j \geq 0$  then $$\left\{ \begin{array}{ll} (i) & [P_J S P_J]^{-1} \alpha = \alpha, \\ & \\
	(ii)& \det P_J S P_J|_{T^{1,0}_0\R^{2n}} = \prod_{j=1}^n [\cosh \lambda_j]. \end{array} \right. $$\end{prop}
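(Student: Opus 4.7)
The plan is to reduce both statements to a single operator identity on $V\otimes \C$, namely
\[
  P_J S P_J \;=\; \cosh(X)\, P_J,
\]
where $S = e^X$ with $X = X^t \in \mathfrak{sp}(n)$, as guaranteed by the de~Gosson proposition recalled just above. The key algebraic ingredient is that $X$ anticommutes with $J$: from $S J S = J$ and the positivity of $S$ one obtains $J e^X J^{-1} = e^{-X}$, and taking logarithms yields $J X J^{-1} = -X$. Consequently $J$ commutes with every even power of $X$ and anticommutes with every odd power, so $\cosh X$ commutes with $J$ (hence with $P_J$) while $\sinh X$ anticommutes with it.

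To derive the operator identity I would expand
\[
  P_J S P_J = \tfrac14(I - iJ)\,S\,(I - iJ) = \tfrac14\bigl(S - iSJ - iJS - JSJ\bigr),
\]
substitute the symmetric-symplectic relations $SJ = JS^{-1}$, $JS = S^{-1}J$, and $JSJ = -S^{-1}$ to rewrite this as $\tfrac14[(S+S^{-1}) - i(JS^{-1} + S^{-1}J)]$, and then apply the anticommutation identity $Je^{-X} = e^{X}J$ to collapse $JS^{-1} + S^{-1}J$ to $2\cosh(X)\, J$. Collecting the terms gives $P_J S P_J = \tfrac12 \cosh(X)(I - iJ) = \cosh(X)\, P_J$, so $P_J S P_J$ preserves $T^{1,0}$ and acts there as $\cosh X\big|_{T^{1,0}}$.

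For part~(ii), since $X$ anticommutes with $J$, each $\lambda_j$-eigenvector $v$ of $X$ pairs with $Jv$ as a $(-\lambda_j)$-eigenvector, so the real plane they span is a single $\cosh\lambda_j$-eigenspace of $\cosh X$ contributing just the one holomorphic vector $P_J v \in T^{1,0}$. The spectrum of $\cosh X\big|_{T^{1,0}}$ is therefore $\{\cosh\lambda_j\}_{j=1}^n$, and $\det\bigl(\cosh X\big|_{T^{1,0}}\bigr) = \prod_{j=1}^n \cosh\lambda_j$.

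For part~(i), diagonalizability of the symmetric matrix $X$ means $S\xi = \xi$ forces $X\xi = 0$, and hence $\cosh(X)\,\xi = \xi$. Using that $\cosh(X)$ commutes with $P_J$,
\[
  [P_J S P_J]\,\alpha \;=\; \cosh(X)\, P_J \xi \;=\; P_J\, \cosh(X)\,\xi \;=\; P_J \xi \;=\; \alpha.
\]
Since every $\cosh\lambda_j > 0$, $P_J S P_J$ is invertible on $T^{1,0}$, and inverting the identity above yields $[P_J S P_J]^{-1}\alpha = \alpha$. The main obstacle is just the algebraic bookkeeping needed to derive the operator identity $P_J S P_J = \cosh(X)\,P_J$; once that is in place both parts drop out immediately, and the only subtlety is recognizing that the inverse in~(i) is intrinsically taken on the invariant subspace $T^{1,0}$, where positivity of the $\cosh\lambda_j$ guarantees it exists.
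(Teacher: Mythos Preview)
Your argument is correct and is essentially the paper's own proof, rephrased via the substitution $S=e^X$ so that $\tfrac12(S+S^{-1})=\cosh X$: the paper proves the same operator identity $P_J S P_J=\tfrac12(S+S^{-1})P_J$ by the identical expansion and the same relations $SJ=JS^{-1}$, $JSJ=-S^{-1}$, and then reads off the eigenvalues $\cosh\lambda_j$ on $P_J e_k$ and the invariance of $\alpha$ exactly as you do. The only cosmetic difference is that you pass through the anticommutation $JXJ^{-1}=-X$ at the Lie-algebra level, whereas the paper stays with $S$ and $S^{-1}$ throughout.
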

\begin{proof}
	
	The proof is through a series of Lemmas:
	
	\begin{lem} If $S$ is  positive definite symplectic, then 
		$$P_J S P_J =\half P_J (S + S^{-1}) = \half (S + S^{-1}) P_J$$

	\end{lem}
	
	\begin{proof}
		$$\begin{array}{lll} P_J S P_J & = & \frac{1}{4} (I - i J)  S (I - i J) = \frac{1}{4}[ S - i JS - i SJ - J SJ ]  \\&&\\& = & 
		 \frac{1}{4} [S + S^{-1}] - \frac{i}{4}[J [S + S^{-1}] = \frac{1}{4}\left( (S + S^{-1}) - i J (S + S^{-1})\right)
		= \frac{1}{2} P_J (S + S^{-1}).
		\end{array}$$
		since $JSJ = - S^{-1}   $ if $S$ is symmetric. 
		Also,
		$$J (S + S^{-1} ) = JS + SJ = (S^{-1} + S)J$$
		so that $P_J (S + S^{-1}) = (S + S^{-1}) P_J. $
	\end{proof}

	\begin{lem} \label{EIGLEM} Let $S$ be  positive definite symmetric symplectic and  $e_j$ be eigenvectors of $S$ for eigenvalues $\lambda_1, \dots, \lambda_n$.
		Consider the basis $P_J e_k$ of $H^{1,0}_J$. Then  $$[P_J S P_J] P_J e_k =  \cosh(\lambda_j) P_J e_k, $$
		and $[P_J S P_J]^{-1} = P_J [S + S^{-1}]^{-1} P_J. $ 
	\end{lem}
	
	\begin{proof}
		
		Follows from the previous Lemma  and the fact that $(S + S^{-1})$ commutes with $P_J$:
		\[ [P_J S P_J] P_J e_k = \half P_J(S+S^{-1}) e_k = \half (e^{\lambda_j}+ e^{-\lambda_j}) P_J e_k = \cosh(\lambda_j)P_J e_k. \]
		 \end{proof}

	Statement (i) of the Proposition follows from the fact that
	$$[P_J S P_J] \alpha = \half (1+1)\alpha
	= \alpha. $$
	
	Statement (ii) follows from the fact that the eigenvalues of $P_J S P_J$ are 
	$\cosh \lambda_j$ by Lemma \ref{EIGLEM}.

\end{proof}

\subsection{Strong Hyperbolicity Hypothesis}
Let $z$ be a periodic point of the Hamiltonian flow $g^t$. 
Under this hypothesis, we have the following result. 
\bpp \label{Gconv}
If $\dim_\C M = m > 1$, and $z$ be a periodic point with primitive period $T$, satisfying the strong hyperbolic hypothesis. Then 
\[ \sum_{n \in \Z} |\gcal_n(z)| < \infty.\]
\epp
\bpf
Let the spectrum of $S:=D g^T$ be $\{e^{\lambda_j}, e^{-\lambda_j} \}_{j=1}^m$, with $\lambda_1=0$ and $\lambda_j > 0$ for $j=2,\cdots,n$. Then, recall that 
\[  \gcal_n(z) = [ \det (P_J S^n P_J) \la (P_J S^n P_J)^{-1} \alpha, \alpha \ra ]^{-1/2} . \]
Then, from previous section, we have $ \det (P_J S^n P_J) = \prod_{j=1}^n \cosh(n \lambda_j)$, and $\la (P_J S^n P_J) \alpha, \alpha \ra = \la \alpha, \alpha \ra$ independent of $n$. Since $\lambda_j > 0$ for $j=2, \cdots, m$, hence 
\[ |\gcal_n| = |\det (P_J S^n P_J) \la \alpha, \alpha \ra |^{-1/2} < C e^{ -|n|\sum_j \lambda_j} \]
for some positive constant $C$. Thus the sum $\sum_{n \in \Z} |\gcal_n(z)|$ converges exponentially fast. 
\epf

\subsection{Proof of Proposition \ref{HYPLEM}}

By Proposition \ref{Gconv}, the  family of measures 

$$d\nu_T(\lambda) := \sum_{|n| \leq T}  \rho_T( n T(z)) e^{-i \lambda n T_z} e^{-ikn \theta^h_z(T_z)} 
\;\gcal_n(z) d \lambda, \;\;\; (T \in \R_+)$$
converges in the weak* sense of distributions on the space $\scal(\R)$
of Schwartz functions to the limit distribution, 
\begin{equation} \label{nudef} d\nu(\lambda) := \sum_{n \in \Z} e^{-i \lambda n T_z} e^{-ikn \theta^h_z(T_z)} 
\;\gcal_n(z) d \lambda, \end{equation}
since the coefficients $\gcal_n(z) $ are bounded in $n$ and  by dominated convergence, $$\int_{\R} f(\lambda) d\nu_T(\lambda) =
\sum_{|n| \leq T} \rho_T(n T(z))  \hat{f} (n T(z))
\;\gcal_n(z)  \to 
\sum_{n \in \Z}   \hat{f} (n T(z))
\;\gcal_n(z), $$
where the sum on the right side converges absolutely.

\section{\label{TAUBERPROOF} Proof of Theorem \ref{2TERM}}

In this section we apply Theorem \ref{PikfTH}  and a Tauberian theorem to prove Theorem \ref{2TERM}.  We are concerned with the Weyl sums, \begin{equation}\label{DOSsharp} 
\begin{array}{l} 
\Pi_{k, [E_1, E_2]}(z) = 
\int_{E_1}^{E_2} d\mu_k^{z,1,E} = \sum_{j : k (\mu_{kj} - H(z)) \in [E_1, E_2]}
\Pi_{k, j}(z). \end{array} 
\end{equation}
The basic idea is to  convolve ${\bf 1}_{[E_1, E_2]}$ with a well-chosen  Schwartz test function depending on $(h, T)$, apply Theorem \ref{PikfTH}  and 
then estimate the remainder.

We consider both families of  measures of \eqref{mukzdef}, $\mu_k^z$ and $\mu_k^{z, 1, E}$. The main difference is the range of eigenvalues involved. 
The measures $\mu_k^z$ have a fixed compact support, the range $H(M) = [H_{\min}, H_{\max}]$ of $H$, and the mean level spacing between the $k^m$ point masses $\mu_{kj}$  is $k^{-m}$.  The measures $\mu_k^{z, 1, E}$ are scaled versions,
$$\mu_k^{z,1, E}[-M,M] = \sum_{j: |\mu_{jk} - E| < \frac{M}{k}} \Pi_{kj}(z),  $$
and the mean level spacing between the point masses is $k^{-m + 1}.$ Of course, 
\begin{equation} \label{M2} \sum_{j: |\mu_{jk} - E| < \frac{M}{k}} \Pi_{kj}(z) = \mu_k^{z,1,E}[-M, M] =\mu_k^z[\frac{-M}{k}, \frac{M}{k}], \end{equation}

As a preliminary, we quote a result from \cite[Theorem 3]{ZZ17}:

\begin{theo} \label{ELLSMOOTH} 
	Let $E$ be a regular value of $H$ and $z \in H^{-1}(E)$. If $\epsilon$ is small enough, such that the Hamiltonian flow trajectory starting at $z$ does not return to $z$ for time $|t| < 2\pi \epsilon$, then for any Schwarz function 
	$f \in \scal(\R)$ with $\hat{f}$ supported in $(-\epsilon, \epsilon)$ and $\h{f}(0) = \int f(x) dx = 1$, and for any $\alpha \in \R$ we have 
	\[
	\int_\R f(x) d \mu^{z, 1, \alpha}_k(x) = \kk^{m-1/2} e^{- \frac{\alpha^2}{\|\xi_H(z)\|^2}} \frac{\sqrt{2}}{2\pi \|\xi_H(z)\|}(1+ O(k^{-1/2})).
	\]
\end{theo}

There is a further integrated version of the Weyl law with remainder, \begin{equation} \label{IWL} \#\{j: |\mu_{kj} - E| \leq \frac{M}{k}\} = \frac{2 M}{(2 \pi)^n} {\Vol}(h^{-1}(E)) k^{m-1} + o(k^{m-1}). \end{equation} The constraint in the sum \eqref{M2} is a `codimension one' condition localizing around $H^{-1}(E)$.  
The
extra integration in \eqref{IWL} gives an extra factor of $k^{-\half}$ in the stationary phase 
expansion. Note that $\int_M \Pi_{kj}(z) dV(z) = \rm{Mult}(\mu_{kj})$ (the multiplicity of the eigenvalue, generically equal to $1$), so the integrated
Weyl law does not deal with non-uniform weights $\Pi_{kj}(z)$. The integrated Weyl law (essentially contained in \cite{BG81}.

The remainder  estimate requires the use of a semi-classical Tauberian theorem for a sequence $\mu_k^{z, 1, E}$ of measures. Before getting started, let us note some basic facts about this sequence. First, $\mu_k^{z, 1, E}$ is not normalized to be a probabiity measure, but it is finite and could  be normalized  by dividing by its mass $\Pi_{h^k}(z) \simeq k^m + O(k^{m-1}))$. 
In the following discussion, we divide by the mass. 
Second, note that $\Pi_{h^k}(z) ^{-1}d\mu_k^{z,1,E} $ is a centered re-scaling of $\Pi_{h^k}(z) ^{-1}d\mu_k^z $ \eqref{mukzdef}. That is $D_{k} \tau_E d\mu_k^{z, 1 E} = d\mu_k^z$
where the dilation operator is defined by  $D_k \nu(I) = \nu(k I)$ for any interval $I$ and measure $\nu$. Also $\tau_E f(x) = f(x -E)$. Now, $\mu_k^z$
is supported in $H(M)$ (the range of $H: M \to \R$), hence $\mu_k^{z, 1, E}$ is supported in $k (H-E) (M)$. In \cite{ZZ17} we studied $\Pi_{h^k}(z) ^{-1} \mu_k^{z, \half, E} : = D_{\sqrt{k}} \Pi_{h^k}(z) ^{-1} \mu_k^{z, 1, E}$, whose support is $\sqrt{k} (H -E) (M)$  and proved that it tends to a Gaussian. In particular, its Fourier transform is continuous at $0$, and by Levy's continuity theorem (or by  direct analysis), the sequence  $\Pi_{h^k}(z) ^{-1} \mu_k^{z, 1/2, E}$  is tight. By comparison, $\Pi_{h^k}(z) ^{-1} \mu_k^{z, 1, E}$ is not tight, and indeed the   $\Pi_{h^k}(z) ^{-1} \mu_k^{z,1,E} ([a,b]) \simeq k^{-\half}$, so that the mass is spreading out to infinity and it does not
weak* converge on $C_b(\R)$.

Theorem \ref{2TERM} not only gives the leading order term but also the order of the remainder.  As is well-known from work of Duistermaat-Guillemin, Ivrii, Safarov and others, obtaining a sharp  remainder term requires the use of something similar to Fourier transform methods and in particular Fourier Tauberian theorems. 
As mentioned before, Theorem \ref{2TERM} is analogous to Safarov's non-classical  pointwise Weyl asymptotics for the spectral function of a Laplace operator $\Delta$, or more precisely, asymptotics on intervals $[\lambda, \lambda + 1]$ for $\sqrt{-\Delta}$. The $Q$-notation is adopted from \cite{Sa,SV}. Since we are working on phase space, $Q$ involves closed orbits rather than loops in configuration space. However, we need to use a semi-classical Tauberian theorem rather than the homogeneous Tauberian theorem of \cite{SV}, i.e. we are considering a sequence of measures $\mu_k^{z,1,E}$ on a fixed interval rather than a fixed measure on  expanding  intervals $[0, \lambda]$.

Semi-classical Tauberian theorems have been known for a long time. It is a classical fact that to obtain sharp remainder estimates, one must make use of the
Fourier transform of the measures on long time intervals $[-T, T]$. A Tauberian theorem of the needed type is proved in  \cite{PR85}, adapting the statement of Safarov's non-classical Weyl asymptotics to a semi-classical problem. 
This theorem does not quite apply to our setting for various reasons: (i) It assumes the sequence of measures have fixed compact support; (ii) it assumes the `weights' or masses of the point masses are uniform.  On the contrary, the  `weights' $\Pi_{k,j}(z)$ of $\mu_k^{z, 1, E} $  are highly non-uniform in a way that is inconsistent with the hypotheses of the Tauberian Theorem of \cite{PR85}.  Consider the graph of the weights $\Pi_{k,j}(z)$ as a function of $\mu_{kj}$, i.e. the coefficients of the
point masses of $\mu_k^z$ \eqref{mukzdef}. On average the weights are of order $1$ since there
are $k^m$ terms and the total sum is $\Pi_k(z) \simeq \rm{Vol}(M, \omega) k^m$. But the weights are highly non-uniform: 

\begin{enumerate}
	
	\item they  peak when $\mu_{kj} \simeq H(z)$; indeed, it
	is shown inf \cite[Theorem 1]{ZZ17} that $\mu_{k}^z$ tends weakly to  $\delta_{H(z)}$. \bigskip
	
	\item By \cite[Theorem 2]{ZZ17},  $\sum_{j: |\mu_{kj} - H(z)| < M  k^{-\half}} \Pi_{k, j}(z)  \sim M k^m$ while the number of terms
	is of order $k^{m-\half}$. Thus, on average, $\Pi_{k,j}(z)$ is of size $k^{\half}$ in this eigenvalue range. \bigskip
	
	\item Further, $\Pi_{k, j}(z) \lesssim k^{-C}$ when
	$|H(z) - \mu_{kj} | \geq C k^{-\half} \log k$. Hence, the weights decay rapidly when $\mu_{kj}$ lies outside of the range $|H(z) - \mu_{kj} | \leq C k^{-\half} \log k$. Consequently, 
	the sequence of dilated measures  $\mu_k^{z,1,E}$ 
	concentrates in the sets $[-k^{\half} \log k, k^{\half} \log k]$.
	
\end{enumerate}

Since we need to modify the Tauberian  Theorem of \cite{PR85}  to accomodate the strong  peaking of the weights around $H(z)$, we go through the modified proof in detail.

\subsection{\label{MCSECT} Mollifiers and convolution}
We use the following notation: 
Let $\rho_1 \in C_0^{\infty}(-1,1) $ satisfy $\rho_1(t) = 1 $ on $[-\half, \half], \rho_1(-t) = \rho_1(t)$. We may also
assume $\fcal {\rho_1}(\tau) \geq 0$ and $ \fcal {\rho_1}(\tau) \geq \delta_0 > 0$ for $|\tau| \leq \epsilon_0$, where
 $\fcal$ and $\fcal^{-1}$ denote the standard Fourier transform and its inverse,  
\[ \h f(x) := (\fcal f)(x) = (2\pi)^{-1} \int f(t) e^{- i t x}  dt, \quad \check f(x) = (\fcal^{-1} f)(x) = \int f(t) e^{it x} dx\]
Then set,
\begin{equation} \label{thetaDEF} \rho_T(\tau) = \rho_1(\frac{\tau}{T}), \;\;\;\theta_{T}(x) := \h \rho_T (x)  = T \h  \rho_1(x T). \end{equation}
In particular, $\int \theta_T(x) dx = 1$ and $\theta_T(x) > T \delta_0$ for $|x| < \epsilon_0/T$. 
Let 
\begin{equation}  \sigma_k^{z, 1, E}(x) = \mu_k^{z,1, E} (-\infty, x]. \end{equation}

\subsection{Tauberian theorem for $\mu_k^{z,1,E}$}

In this section we determine the asymptotics of 
\begin{equation} \label{DIFFSIGMAS} \sigma_k^{z,1,E}(E_2) - \sigma_k^{z,1, E}(E_1) =  \int_{E_1}^{E_2} d\mu_k^{z,1,E}(x) = 
\sum_{j: \frac{E_1}{k} \leq \mu_{jk} - E  \leq \frac{E_2}{k}} \Pi_{k,j}(z). \end{equation}
We recall that the mean level spacings of $k (\mu_{k,j} - E)$ is $k^{-m + 1}$
so that the number of terms in the sum is of order $k^{m-1}$.
The plan is to mollify the measures by convolution with $\theta_{T}$  \eqref{thetaDEF}, 
so that it suffices to determine the asymptotics of
\begin{equation} \label{MAINLINE} \begin{array}{l} \sigma_k^{z,1,E} * \theta_T (E_2) - \sigma_k^{z,1, E} * \theta_T (E_1) \\ \\
+ \left(\sigma_k^{z,1,E}(E_2) - \sigma_k^{z, 1, E}(E_1) \right)-  \left(\sigma_k^{z,1,E}* \theta_T (E_2)- \sigma_k^{z, 1, E}  * \theta_T (E_1) \right)
\end{array} \end{equation}
Since
$$ \sigma_k^{z,1,E} * \theta_T (E_2) - \sigma_k^{z,1, E} * \theta_T (E_1)  = \int_{E_1}^{E_2} \theta_{h, T} * d\mu_k^{z, 1,E}(\lambda),  $$ we have
\begin{equation}\label{NEEDT}
\left(\sigma_k^{z,1,E}(E_2) - \sigma_k^{z, 1, E}(E_1) \right)-  \left(\sigma_k^{z,1,E}* \theta_T (E_2)- \sigma_k^{z, 1, E}  * \theta_T (E_1) \right)= \int_{E_1}^{E_2} (\theta_T * d\mu_k^{z, 1, E} - d\mu_k^{z, 1, E} ). \end{equation}

First we consider the top terms of \eqref{MAINLINE}. 
\begin{prop} \label{TOPTERM}Assume that $H(z) = E, z \in \pcal_E$. Then
\be \frac{d}{dx} (\sigma_k^{z, 1, E} * \theta_T) (x) = \kk^{m-1/2} \sum_{n \in \Z} \rho_T(n T_z) e^{-i x n T_z} e^{-ikn \theta^h_z(T_z)}  \gcal_n(z) + O_T(k^{m-3/2})  \label{diffsigma} \ee
and
$$ \sigma_k^{z,1,E} * \theta_T (E_2) - \sigma_k^{z,1, E} * \theta_T (E_1)  = k^{m-\half} \int_{E_1}^{E_2}  \sum_{|n T_z| \leq T}  \rho_T( n T_z) e^{-i \lambda n T_z} e^{-ikn \theta^h_z(T_z)} 
	\;\gcal_n(z) d \lambda + O(k^{m-1}),\;  . $$
\end{prop}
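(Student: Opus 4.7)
The plan is to recognize that the left-hand side equals $\mu_k^{z,1,E}$ tested against a well-chosen Schwartz function with compactly supported Fourier transform, so that Theorem~\ref{PikfTH} applies directly. Since $\sigma_k^{z,1,E}$ is the distributional antiderivative of $\mu_k^{z,1,E}$ and $\theta_T \in \scal(\R)$, convolution commutes with differentiation, giving
\[
\frac{d}{dx}(\sigma_k^{z,1,E} * \theta_T)(x) = (\mu_k^{z,1,E} * \theta_T)(x) = \int_{\R} f_x(y)\,d\mu_k^{z,1,E}(y),
\]
where $f_x(y) := \theta_T(x-y)$. Because $\theta_T = \hat{\rho}_T$ with $\rho_T \in C_c^\infty((-T,T))$, the function $f_x$ is Schwartz; and by the translation rule for the Fourier transform one has $\hat{f}_x(t) = (2\pi)^{-1} e^{-itx} \rho_T(t)$, which is supported in $[-T,T]$. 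Thus $f_x$ meets the hypotheses of Theorem~\ref{PikfTH}, with Schwartz seminorms that are uniform in $x$ (the $x$-dependence enters only through a unimodular prefactor).

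Applying the periodic case of Theorem~\ref{PikfTH} (valid because $z \in H^{-1}(E) \cap \pcal_E$) to $f_x$ yields
\[
\int_{\R} f_x\,d\mu_k^{z,1,E} = \kk^{m-1/2} \sum_{n \in \Z} \hat{f}_x(nT_z)\,\gcal_n(z)\,e^{-ikn\theta_z^h} + O_T(k^{m-3/2}).
\]
Substituting $\hat{f}_x(nT_z) = (2\pi)^{-1} e^{-ixnT_z}\rho_T(nT_z)$ produces the derivative formula \eqref{diffsigma} (up to a $(2\pi)^{-1}$ Fourier normalization that is absorbed throughout the paper), with the sum automatically truncated to $|nT_z| \leq T$ since $\supp \rho_T \subset [-T,T]$. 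Integrating from $E_1$ to $E_2$ then gives the second formula: only finitely many terms contribute, the $x$-dependence is the oscillatory factor $e^{-i\lambda n T_z}$, and the remainder integrated over the fixed interval produces $O_T(k^{m-3/2})$, which is absorbed into the stated $O(k^{m-1})$.

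The delicate point is the $T$-dependence of the remainder. The error constant in Theorem~\ref{PikfTH} involves Schwartz seminorms of the test function, and for $f_x$ these grow polynomially in $T$. For the present statement $T$ is treated as fixed, so the pointwise $O_T(k^{m-3/2})$ bound is harmless; the weaker $O(k^{m-1})$ in the integrated version is deliberately retained for the subsequent Tauberian step of Section~\ref{TAUBERPROOF}, in which $T$ will be allowed to grow with $k$ and one combines this estimate with the independent bound \eqref{NEEDT}. The main obstacle is therefore not the present proposition—once $f_x$ is identified as a legitimate test function, everything reduces to a single clean application of Theorem~\ref{PikfTH}—but rather the bookkeeping needed to make the remainder in Theorem~\ref{PikfTH} genuinely uniform in $x$ and to track its dependence on $T$ in preparation for the Tauberian argument.
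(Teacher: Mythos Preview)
Your proposal is correct and follows essentially the same route as the paper: identify $\frac{d}{dx}(\sigma_k^{z,1,E}*\theta_T)(x)$ as $\int f_x\,d\mu_k^{z,1,E}$ with $f_x(y)=\theta_T(x-y)$, note that $\hat f_x$ has compact support, and invoke Theorem~\ref{PikfTH}. Your additional remarks on the $x$-uniformity of the Schwartz seminorms and the $T$-dependence of the remainder make explicit points the paper leaves implicit, but the underlying argument is the same.
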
 

\begin{proof}

\bea
 \frac{d}{dx} (\sigma_k^{z, 1, E} * \theta_T) (x) &=& \int \theta_T(x-y) d \mu_k^{z,1, E} (y) \\
&=& \int_\R \int_\R \rho_T(-t) e^{-i t (x-y)} \sum_j \delta_{k (\mu_{k,j} - E)}(y) \Pi_{k,j}(z) dy d t \\
&=& \int_\R \rho_T(t) e^{-itx}  \sum_j e^{it k (\mu_{k,j} - E) } \Pi_{k,j}(z) dt \\
&=& \int_\R \rho_T(t) e^{-itx-itkE} U_k(t, z,z) dt \\
&=& \kk^{m-1/2} \sum_{n \in \Z} \rho_T(n T_z) e^{-i x n T_z} e^{-ikn \theta^h_z(T_z)}  \gcal_n(z)  (1+O(k^{-1})). 
\eea
where the last line follows from Theorem \ref{PikfTH} to $f(y)= \theta_{T}(x -y)$. 

\end{proof}

\bc
Under the strong hyperbolicity hypothesis (Definition \ref{hyperhypo}), there exists constants $\gamma_0(z), C_1(T,z)$, such that
\[ \frac{d}{dx} (\sigma_k^{z, 1, E} * \theta_T) (x) \leq \kk^{m-1/2} \gamma_0(z) +  C_1(T, z) k^{m-3/2}.\]
\ec
\bpf
We start from \eqref{diffsigma}, and let $T \to \infty$.  By Proposition \ref{Gconv}, the sum in \eqref{diffsigma} with $\rho_T$ replaced by $1$ converges absolutely. 
\epf
 
We now employ a semi-classical Fourier Tauberian theorem to  estimate \eqref{NEEDT}.  In fact, since we already semi-classically scaled $d\mu_k^z$ by $k$, we do not need to scale again. We only refer to the Tauberian as semi-classical because it applies to a sequence $\mu_k^{z, 1, E}$ of measures on a fixed interval rather than to a fixed measure on a dilated family of intervals as in the homogeneous Tauberian theorem.

The Tauberian theorem  states:

\begin{prop}\label{TLEM} There  exist constant $\gamma(z ), C(T,z) $ such that, for any $T >0$,
	$$
	\int_{E_1}^{E_2} (\theta_T * d\mu_k^{z, 1, E} - d\mu_k^{z, 1, E}) \leq \frac{\gamma(z)}{T} k^{m - \half} + C(T, z) k^{m-3/2}. $$ 
\end{prop}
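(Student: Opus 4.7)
The plan is to reduce the interval estimate to a pointwise estimate $|\sigma_k^{z,1,E}(x) - \sigma_k^{z,1,E}*\theta_T(x)| \leq C M_T/T$, where $M_T := \sup_x (\sigma_k^{z,1,E}*\theta_T)'(x)$. Once this is established, the proposition follows immediately by the triangle inequality
$$\Big|\int_{E_1}^{E_2}(\theta_T * d\mu_k^{z,1,E} - d\mu_k^{z,1,E})\Big| \leq 2\sup_x|\sigma_k^{z,1,E}(x) - \sigma_k^{z,1,E}*\theta_T(x)|,$$
combined with the bound $M_T \leq \gcal_0(z) \kk^{m-1/2} + C_1(T,z)k^{m-3/2}$ supplied by Proposition \ref{TOPTERM} and its corollary, which identifies $\gamma(z)$ as a fixed multiple of $\gcal_0(z)$.

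For the pointwise estimate, let $\Theta_T(u) := \int_{-\infty}^u \theta_T(s)\,ds$ and integrate by parts in the defining convolution to obtain
$$\sigma_k^{z,1,E}(x) - \sigma_k^{z,1,E}*\theta_T(x) = \int_\R g_T(x-y)\,d\mu_k^{z,1,E}(y), \qquad g_T(u) := \mathbf{1}_{u \geq 0} - \Theta_T(u).$$
Since $\theta_T(s) = T\widehat{\rho_1}(Ts)$ with $\widehat{\rho_1}$ Schwartz, the function $g_T$ is a tail of $\theta_T$ and satisfies $|g_T(u)| \leq C_N(1+T|u|)^{-N}$ for every $N$, so it is rapidly decaying at scale $1/T$.

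The main obstacle is to exploit the positivity of $\mu_k^{z,1,E}$ to control the integral against this rapidly decaying kernel in terms of $M_T/T$. The key observation is that the choice of $\rho_1$ forces $\theta_T(y) \geq T\delta_0$ on $|y| \leq \epsilon_0/T$. For any $y_0$, restricting the convolution $(\theta_T * \mu_k^{z,1,E})(y_0) \leq M_T$ to the interval $[y_0-\epsilon_0/(2T), y_0+\epsilon_0/(2T)]$ gives
$$\mu_k^{z,1,E}\bigl([y_0-\epsilon_0/(2T), y_0+\epsilon_0/(2T)]\bigr) \leq \frac{M_T}{T\delta_0}.$$
Decomposing $\R$ into a union of such intervals of length $\epsilon_0/T$ centered at $x+n\epsilon_0/T$, and using $(1+T|x-y|)^{-N} \leq C(1+|n|)^{-N}$ on the $n$-th interval, yields
$$\int_\R |g_T(x-y)|\,d\mu_k^{z,1,E}(y) \leq \sum_{n\in\Z} C(1+|n|)^{-N}\cdot\frac{M_T}{T\delta_0} \leq \frac{C'M_T}{T}$$
for $N\geq 2$. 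This establishes the pointwise estimate and thus the proposition with $\gamma(z) = 2C'\gcal_0(z)/\delta_0$ and $C(T,z) = 2C'C_1(T,z)/\delta_0$.

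The one delicate point worth double-checking is that the positivity of the Fourier transform $\widehat{\rho_1}$ (equivalently $\theta_T \geq 0$), used in a crucial way in step 4, is compatible with the normalization $\int \theta_T = 1$ and with the support assumption $\rho_T \in C_0^\infty(-T,T)$; both are built into the setup of Section \ref{MCSECT}. Everything else is soft, resting only on monotonicity of $\sigma_k^{z,1,E}$ (which follows from positivity of the measure), Schwartz decay of $\theta_T$, and the explicit leading-order bound on $M_T$ that was computed via Theorem \ref{PikfTH}.
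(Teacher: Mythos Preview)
Your argument is correct and is the classical packaging of the semi-classical Fourier Tauberian theorem: you control $\sigma_k - \sigma_k*\theta_T$ pointwise by writing it as $\int g_T(x-y)\,d\mu_k(y)$, bound $\mu_k$ on intervals of length $\epsilon_0/T$ via the positivity $\theta_T \geq T\delta_0$ near the origin, and sum using the Schwartz decay of $g_T$. The paper reaches the same conclusion by a slightly different route: it first rewrites $\int_{E_1}^{E_2}(\theta_T*d\mu_k - d\mu_k)$ as $\int_\R \bigl(\mu_k([E_1,E_2]-\tau)-\mu_k([E_1,E_2])\bigr)\theta_T(\tau)\,d\tau$, then proves an increment estimate (Proposition~\ref{TLEM2}) of the form $|\mu_k([E_1,E_2]-\tau)-\mu_k([E_1,E_2])| \lesssim (T^{-1}+|\tau|)k^{m-1/2}$ by telescoping over steps of size $\epsilon_0/T$, and finally integrates against $\theta_T$ splitting into $|\tau|\leq 1/T$ and $|\tau|>1/T$. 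Both proofs rest on exactly the same key inequality $\mu_k(I) \leq M_T/(T\delta_0)$ for $|I|\leq \epsilon_0/T$, derived from the lower bound on $\theta_T$ near zero; your version is more streamlined because it absorbs the telescoping and the $\tau$-splitting into a single dyadic sum, while the paper's increment lemma has the advantage of isolating a statement (linear growth of increments) that is of independent use. One minor slip: the constant bounding $M_T$ in the Corollary to Proposition~\ref{TOPTERM} is denoted $\gamma_0(z)$ (essentially $\sum_n|\gcal_n(z)|$ under strong hyperbolicity), not $\gcal_0(z)$; this does not affect your argument.
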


Together with Proposition \ref{TOPTERM} this gives
\begin{cor} \label{TAUBERCOR} For any $T > 0$, there  exist $\gamma_0(z, \tau), \gamma, C_1(T, z, \tau) > 0$ so that  $$ \sigma_k^{z,1,E} (E_2) - \sigma_k^{z,1, E} (E_1)  = \kk^{m-\half} \int_{E_1}^{E_2}  \sum_{|n T_z| \leq T}  \rho_T( n T_z) e^{-i \lambda n T_z} e^{-ikn \theta^h_z(T_z)} 
	\;\gcal_n(z) d \lambda + \frac{1}{T} O(k^{m-\half})+\;  O_T (k^{m-3/2}). $$
	
\end{cor}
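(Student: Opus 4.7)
The plan is to write the sharp counting function $\sigma_k^{z,1,E}(E_2)-\sigma_k^{z,1,E}(E_1)$ as a mollified version plus a small correction, treat the mollified piece by the already-proved smooth expansion of Proposition \ref{TOPTERM}, and control the correction by the Tauberian estimate of Proposition \ref{TLEM}. Concretely, I would start from the algebraic identity
\[
\sigma_k^{z,1,E}(E_2)-\sigma_k^{z,1,E}(E_1)=\bigl[(\sigma_k^{z,1,E}\ast\theta_T)(E_2)-(\sigma_k^{z,1,E}\ast\theta_T)(E_1)\bigr]+R_k(T),
\]
where, as in \eqref{NEEDT},
\[
R_k(T)=\int_{E_1}^{E_2}\bigl(d\mu_k^{z,1,E}-\theta_T\ast d\mu_k^{z,1,E}\bigr).
\]

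Next I would apply Proposition \ref{TOPTERM} directly to the first bracket. That proposition already identifies
\[
(\sigma_k^{z,1,E}\ast\theta_T)(E_2)-(\sigma_k^{z,1,E}\ast\theta_T)(E_1)=\kk^{m-\half}\!\int_{E_1}^{E_2}\!\sum_{|nT_z|\le T}\rho_T(nT_z)\,e^{-i\lambda nT_z}\,e^{-ikn\theta_z^h(T_z)}\,\gcal_n(z)\,d\lambda
\]
modulo an error of size $O_T(k^{m-3/2})$ (integrating the pointwise remainder \eqref{diffsigma} over the fixed interval $[E_1,E_2]$). This gives exactly the main term announced in the corollary. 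Then I would invoke Proposition \ref{TLEM} to bound $|R_k(T)|\le \gamma(z)T^{-1}k^{m-\half}+C(T,z)k^{m-3/2}$. Summing the two bounds yields the composite remainder $T^{-1}O(k^{m-\half})+O_T(k^{m-3/2})$ displayed in the statement.

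The argument is essentially bookkeeping once both propositions are available, so there is no hard analytic step left; the only care needed is to keep track of which part of the remainder is $T$-uniform and which absorbs $T$-dependence. The $T^{-1}k^{m-\half}$ term is precisely the Tauberian gap coming from replacing $\mu_k^{z,1,E}$ by its convolution with $\theta_T$ and is uniform in $T$, while the $O_T(k^{m-3/2})$ term collects both the stationary-phase remainder from Proposition \ref{TOPTERM} (which depends on $T$ through the compact Fourier support of $\rho_T$) and the $C(T,z)k^{m-3/2}$ tail of Proposition \ref{TLEM}. Since $T>0$ is arbitrary, no optimization is required at this stage; an optimal choice $T=T(k)\to\infty$ slowly can be made later when extracting the final two-term Weyl law.
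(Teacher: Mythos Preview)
Your proposal is correct and follows exactly the approach the paper takes: the paper states the corollary immediately after Proposition~\ref{TLEM} with the one-line justification ``Together with Proposition~\ref{TOPTERM} this gives,'' and the decomposition you write out is precisely the one already set up in \eqref{MAINLINE}--\eqref{NEEDT}. Your explicit bookkeeping of which remainder term is $T$-uniform and which is $T$-dependent is a helpful elaboration of what the paper leaves implicit.
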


\subsection{Proof of Proposition \ref{TLEM}}
As mentioned above, the hypotheses 
of \cite[Theorem 3.1]{PR85} do not hold in our setting. Hence we must extract from \cite[Theorem 3.1]{PR85} the key elements that pertain to our setting.

We have,
$$ \begin{array}{lll}  \int_{E_1}^{E_2} (\theta_T * d\mu_k^{z, 1, E} - d\mu_k^{z, 1, E}) & = & 
\int_{\R} \left(\mu_k( [E_1, E_2] - \tau)  - \mu_k[E_1, E_2]\right) \theta_{T}(\tau) d\tau\\&&\\
& = &T \int_{\R}  \left( \mu_k([E_1, E_2] - \tau) - \mu_k[E_1, E_2]) \right)  \h{\rho}_1( \tau T) d \tau \\&&\\
& = &T  \int_{|\tau| \leq \frac{1}{T} }    \left( \mu_k([E_1, E_2] - \tau)  - \mu_k[E_1, E_2]) \right) \h{\rho}_1( \tau T) d \tau \\&&\\
&  + & T \int_{|\tau| > \frac{1}{T}}    \left( \mu_k( [E_1, E_2]- \tau)  - \mu_k[E_1, E_2]) \right)   \hat{\rho}_1( \tau T) d \tau \\&&\\& =: & I_1  + I_2. \end{array}$$

Evidently, the key objects to estimate are the increments

\begin{equation} \label{INCREMENTS}  \mu_k( [E_1, E_2] - \tau)  - \mu_k([E_1, E_2])  \end{equation}
The key point is to prove the analogue of \cite[Proposition 3.2]{PR85}: 

\begin{prop}\label{TLEM2} There  exist constants $\gamma_1(z)$ and $C_1(T,z)$  such that, for any $T > 0$,  
	$$\left|\left( \mu_k([E_1, E_2] - \tau)  - \mu_k[E_1, E_2]) \right) \right| \leq  \gamma_1(z)   (\frac{1}{T} + |\tau|)  k^{m - \half} + C_1(T,z) O(k^{m-3/2}) $$ 
	
\end{prop}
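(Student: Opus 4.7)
The plan is to mirror the strategy of Pham-Robert \cite{PR85}, but adapted so that the uniform density bound of Proposition \ref{TOPTERM} plays the role of their pointwise spectral function bound. The key observation is that once we have a uniform bound on the smoothed density $(\theta_T * \mu_k^{z,1,E})(x)$, an elementary Fourier Tauberian lemma converts it into a bound on $\mu_k^{z,1,E}(I)$ for any short interval $I$, and a covering argument extends this to any interval of length $|\tau|$.

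First I would reduce the shifted difference to short-interval estimates. For $\tau \geq 0$ one has
$$\mu_k^{z,1,E}([E_1,E_2]-\tau) - \mu_k^{z,1,E}([E_1,E_2]) = \mu_k^{z,1,E}([E_1-\tau,E_1]) - \mu_k^{z,1,E}([E_2-\tau,E_2]),$$
and an analogous identity for $\tau<0$; taking absolute values one is reduced to bounding $\mu_k^{z,1,E}([a,a+|\tau|])$ for arbitrary $a \in \R$.

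Next I would exploit the positivity of $\hat\rho_1$ at the origin. By construction $\theta_T(x) = T\hat\rho_1(xT) \geq \delta_0 T$ for $|x| \leq \epsilon_0/T$, and therefore for every $a \in \R$
$$(\theta_T * \mu_k^{z,1,E})(a) = \int \theta_T(a-y)\, d\mu_k^{z,1,E}(y) \geq \delta_0 T\, \mu_k^{z,1,E}\bigl([a-\epsilon_0/T,\, a+\epsilon_0/T]\bigr).$$
Combined with the Corollary to Proposition \ref{TOPTERM} (which, under the strong hyperbolicity hypothesis, gives a bound $(\theta_T * \mu_k^{z,1,E})(a) \leq \gamma_0(z) k^{m-1/2} + C(T,z) k^{m-3/2}$ \emph{uniformly} in $a$), this yields
$$\mu_k^{z,1,E}(I) \leq \frac{1}{\delta_0 T}\bigl(\gamma_0(z) k^{m-1/2} + C(T,z) k^{m-3/2}\bigr)$$
for every interval $I$ of length $\leq 2\epsilon_0/T$. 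Covering $[a,a+|\tau|]$ by $N \leq |\tau| T/(2\epsilon_0) + 2$ such intervals and summing then gives
$$\mu_k^{z,1,E}([a, a+|\tau|]) \leq \gamma_1(z)\bigl(|\tau| + 1/T\bigr) k^{m-1/2} + C_1(T,z)\, k^{m-3/2},$$
where $\gamma_1(z) = \gamma_0(z)/(2\epsilon_0 \delta_0) + \gamma_0(z)/\delta_0$. Feeding this into the reduction of the first paragraph produces the claimed estimate.

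The main obstacle is establishing the $a$-uniform density bound on $(\theta_T * \mu_k^{z,1,E})(a)$ with a leading constant that does not blow up with $T$. This is exactly where the strong hyperbolicity hypothesis enters: Proposition \ref{Gconv} ensures $\sum_{n \in \Z} |\gcal_n(z)|$ converges, so that the oscillatory sum in \eqref{diffsigma}, which controls the principal term of the smoothed density, is bounded uniformly in both $x$ and $T$. The subdominant error $C(T,z) k^{m-3/2}$ is allowed to depend on $T$ through the support of $\rho_T$, but since it is lower order in $k$ this is harmless in the final application. Without the strong hyperbolicity one would have to contend with possible accumulation of contributions from long periods, which is precisely the phenomenon that makes the Safarov $Q$-function fail to be uniformly continuous in the general case.
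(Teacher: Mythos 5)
Your proposal is correct and follows essentially the same route as the paper: the short-interval bound via the lower bound $\theta_T(x)\geq \delta_0 T$ on $|x|\leq \epsilon_0/T$ combined with the $x$-uniform smoothed-density bound from the Corollary to Proposition \ref{TOPTERM}, followed by a covering (in the paper, a telescoping over shifts of size $\epsilon_0/T$) to reach general $\tau$. The only cosmetic difference is that you phrase the long-$\tau$ step as a covering of $[a,a+|\tau|]$ rather than as the paper's telescoping sum, which is the same estimate.
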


We now show
that Proposition \ref{TLEM2} implies Proposition \ref{TLEM}.

\begin{proof} First,  observe that Proposition \ref{TLEM2} implies,
	\begin{equation} \label{Izbd} I_1  \leq \sup_{|\tau| \leq \frac{1}{T}} 
	\left|   \mu_k( [E_1, E_2] - \tau)  - \mu_k([E_1, E_2])\right|, \end{equation}
	and  Proposition \ref{TLEM2} immediately implies the desired bound of Proposition \ref{TLEM} for $|\tau| \leq \frac{1}{T}$.   For $I_2$ one uses that   $\hat{\rho}_1 \in \scal(\R)$.
	Since
	$T \int_{|\tau| \geq \frac{1}{T} } \hat{\rho}_1(\tau T) d \tau \leq 1, $ Proposition \ref{TLEM2} implies,
	\begin{equation} \label{IIzbd} I_2  \lesssim
	k^{m-\half}\gamma_1(z)   T \int (\frac{1}{T} + |\tau|) 
	\hat{\rho}_1( T \tau) d \tau +  C_1(T,z) O(k^{m-3/2}) T \int_{|\tau| > \frac{1}{T}}    \hat{\rho}_1( \tau T) d \tau  \end{equation}
	If one changes variables to $r = T \tau$ one also gets the estimate of Proposition \ref{TLEM}.\end{proof}

We now prove Proposition \ref{TLEM2}.

\begin{proof} We need to estimate $\left( \mu_k [E_1, E_2] - \tau)  - \mu_k[E_1, E_2]) \right) $. The estimate depends both on the position 
	of $[E_1, E_2]$  relative to the center of mass at $0$ and on the position
	of $\tau$. We recall the the total mass of $\mu_k = \mu_k^{z, 1, E}$ on
	the complement of $[- \sqrt{k} \log k, \sqrt{k} \log k]$ is rapidly decaying in $k$. Hence we may assume that at least one of the following occurs:

	\begin{itemize}
		
		\item $[E_1, E_2] \cap [- \sqrt{k} \log k, \sqrt{k} \log k] \not= \emptyset,$ i.e. 
		$E_1 \geq - \sqrt{k} \log k, E_2 \leq \sqrt{k} \log k$.
		\bigskip
		
		\item  $ [E_1, E_2] - \tau \cap [- \sqrt{k} \log k, \sqrt{k} \log k] \not= \emptyset, $ i.e. $E_1- \tau  - \sqrt{k} \log k,  E_2 - \tau \leq \sqrt{k} \log k$.
		
	\end{itemize}
	
	The proof is broken up into 3 cases: (1) $|\tau | \leq \frac{\epsilon_0}{T}$,
	\;  (2) $\tau = \frac{\ell}{T} \epsilon_0,$\; (3)  $ \frac{\ell}{T} \epsilon_0 \leq \tau \leq  \frac{\ell+1}{T} \epsilon_0 $, for some $ \ell \in \Z$. 
	 
	\begin{enumerate}
		
		\item Assume $|\tau| \leq \frac{\epsilon_0}{T}$. Assume $\tau > 0$ since the
		case $\tau < 0$ is similar.  Write 
		$$ \begin{array}{lll} \mu_k([E_1, E_2] -\tau)  - \mu_k[E_1, E_2])  & = & \int_{\R} [{\bf 1}_{[E_1 - \tau, E_2- \tau]} - {\bf 1}_{[E_1, E_2]}] (x)d \mu_k(x)
		. \end{array} $$
		For $T$ sufficiently large so that $ \tau \ll E_2 - E_1$, $$ [{\bf 1}_{[E_1 - \tau, E_2- \tau]} - {\bf 1}_{[E_1, E_2]}] (x)= {\bf 1}_{[E_1 - \tau, E_1]} - {\bf 1}_{[E_2 - \tau, E_2]}. $$
		We do not expect cancellation between the terms for arbitrary $E_1, E_2, \tau$ and therefore must show that  each term satisfies the desired estimate.
		Since they are similar we only consider the $[E_1 - \tau, E_1]$ interval. Since for $|\tau| < \epsilon_0/T$, we have $\theta_T(\tau) > T \delta_0$, thus
\bea
\mu_k([E_1 - \tau, E_1]) &\leq& \frac{1}{T \delta_0} \int_\R \theta_T(E_1-x) d \mu_k(x) \\
&\sim&  \frac{1}{T \delta_0} \frac{d}{dx} (\sigma_k^{z, 1, E} * \theta_T) (E_1)\\
&<& \frac{\gamma_0(z)}{T \delta_0} k^{m-1/2}
\eea

%
%
%
		It follows that
		$$  \left| \mu_k([E_1, E_2] -\tau)  - \mu_k[E_1, E_2])  \right| \leq \frac{2\gamma_0(z)}{T \delta_0} k^{m-1/2}. $$
	
		\bigskip

		\item Assume $\tau = \ell \frac{\epsilon_0}{T} , \ell \in \Z.$ With no loss of generality, we  may assume  
		$\ell \geq 1.$ Write
		$$ \mu_k([E_1, E_2])  - \mu_k([E_1, E_2]- \frac{\ell}{T} \epsilon_0)  
		= \sum_{j = 1}^{\ell} \mu_k([E_1, E_2] -  \frac{j-1}{T} \epsilon_0 )- \mu_k([E_1, E_2] - \frac{j}{T} \epsilon_0 )   $$
		and apply the estimate of (1) to upper bound the sum by 
		$$\frac{2 \ell \gamma_0(z)}{T \delta_0} k^{m-1/2} = \frac{2 \gamma_0}{\epsilon_0 \delta_0} \tau k^{m-1/2}$$
		
		\bigskip 
		
		\item Assume $ \frac{\ell}{T} \epsilon_0 \leq \tau \leq  \frac{\ell+1}{T} \epsilon_0$ and
		$|\tau h| \leq \epsilon_1$ with $\ell \in \Z$. Write
		$$\begin{array}{lll} \mu_k([E_1, E_2] + \tau ) - \mu_k([E_1,E_2]) & = &  \mu_k([E_1,E_2] + \tau ) -  \mu_k([E_1,E_2] + \frac{\ell}{T} \epsilon_0 ) \\&&\\&&+ \mu_k([E_1,E_2] + \frac{\ell}{T} \epsilon_0 )
		- \mu_k([E_1,E_2]).\end{array} $$
		Apply (1) and (2) , it follows that
		$$|\mu_k([E_1, E_2] + \tau ) - \mu_k([E_1,E_2])| \leq \frac{2 \gamma_0(z)}{\delta_0}(\frac{\tau}{\epsilon_0} + \frac{1}{T}) \gamma_0(\sigma, \lambda) k^{m-\half}. $$ \bigskip

	\end{enumerate}

\end{proof}

\section{\label{BPUSECT} Comparison with BPU}

In this section we compare our formula for the leading coefficient in Theorem \ref{PikfTH} with that in \cite{BPU98}. To do so, we need to introduce the
notation and terminology of that article. 

 Let $\phi_{\tau}^h$ be the horizontal lift of the Hamiltonian flow to $X_h$ (denoted $P$ in \cite{BPU98}). At each point $p \in P$,  define $T_p^h P$ to be the horizontal subspace and $\Lambda_p$ to be the positive definite Lagrangian subspace of $T_p^h P \otimes \C$ (i.e. the type $(1,0)$ subspace).  By the analysis of \cite[Page 98]{BG81} there exists a one-dimensional kernel $\wcal_p$ of this action, the line
of ground states $\wcal_p  \subset H_{\infty}(T_p^h P)$. A normalized section of the bundle $\wcal \to P$ defined by $\wcal_p$ is denoted by $e_p$. Further denote by $M_{\tau}: H_{\infty}(T_p^h P) \to H_{\infty}(T_p^h P)$ the metaplectic representation of the symplectic group of the 
horizontal space $H(T_p^h P)$. 

Let $\Xi$ denote the Hamilton vector field $\xi_H$ . It is written in  \cite{BPU98} that ``$\Xi$ acts on $H(T_p^h P)$ and hence on 
$H_{\infty}(T_p^h P)$ by via the Heisenberg representation. The action is by translations. The projection
from $H_{\infty}(T_p^h(P))$ to generalized invariant vectors under $\Xi$ is defined by

$$P_{\Xi}v : =\int_{-\infty}^{\infty} e^{it \Xi} v dt $$
the projection from $ H_{\infty}(T_p^h P) $ to the invariant vectors for the flow of $\Xi$ 
$p$ above $z$. 

Further let $Q$ be a first order pseudo-differential operator on $L^2(P)$ so that $\Pi Q \Pi  = D \Pi M_H \Pi$ and so that $[Q, \Pi] = 0$. Let $q $ be the symbol of $Q$, which generates a contact flow $\phi_t$ on $P$. Then the flow maps
$\Lambda_p \to \Lambda_{\phi_t(p)}$ and $M_{\tau} $ maps $e_p$ to 
a multiple of $e_{\phi_t(p))}$. Define $c(t)$ by $\Xi_q e_{\phi_t(p)} = i c(t)
e_{\phi_t(p)}$. 

Then the formula of \cite{BPU98} for the leading coefficient at a periodic
orbit of period $\tau$ is
\begin{equation} \label{BPUFORM} C_{\tau, 0} = \frac{1}{2 \pi^{n+1}} \langle M_{\tau}^{-1} e_{p_1}, P_{\Xi} (e_{p_1}) \rangle e^{- i \int_0^{\tau} (\sigma_{sub}(Q) + c(t)) dt} . \end{equation}

The approach of this paper is to replace $H_{\infty}(T_p^h)$ by the 
osculating Bargmann-Fock space, i.e. the Bargmann-Fock space on 
$H^{1,0}_z M$ which carries a complex structure and Hermitian metric and hence a Gaussian inner product.  In effect, the quadratic part of the scaled phase
of $U_k(t, z, z)$ replaces the symbol calculus. We do not use   $Q$ but
the related operator in our setting is $\hat{H}_k$. The $P_{\Xi}$ operator there corresponds to the $dt$ integral near a period in our approach. We now verify that our formula agrees with theirs to the extent possible.

We would like to compare the expression  \eqref{gcalndef}  with the one in 
\cite{BPU98},
\[ \la M_{T}^{-1} e_0, P_{\Xi} e_0 \ra =   \la \eta_{J,D g^{T}} \Pi_J U_{D g^{T}}^{-1} e_0,  \int_{\R} g^{BF, \tau}_* e_0  d\tau \ra= \eta_{J,D g^{T}} \int_{\R}  \la  U_{D g^{T}}^{-1} e_0,   g^{BF, \tau}_* e_0 \ra d\tau\]
where $g^\tau$ is the BF translation (Heisenberg representation) of the constant vector field $\xi_H(0)$ by time $\tau$. Here,  we dropped the projection operator $\Pi_J$, since it  is  acting on $g^{BF, \tau}_* e_0$, which is holomorphic already.

Let 
\[ v = e^{-k|z|^2/2} \]
be the (unnormalized) coherent state centered at $0$. We first review how Heisenberg group and Metaplectic group acts on it. \\
(i) Let $w \in \C^m$. Let $\beta(w)$ be translation by $w$. Then 
\[  [\beta(w) v](z) = e^{k [z \bar w - |z|^2/2 - |w|^2/2]} = e^{k[i \Im(z \bar w) - |z-w|^2/2]} \]
Indeed, it is centered at $w$, with a non-trivial phase factor $i \Im(z \bar w)$. 

(ii) Let $M = \bma P & Q \\ \bar Q & \bar P \ema \in Sp_c$, with $M^{-1} = \bma P^* & - Q^t \\ -Q^* & P^t \ema$. Then
\[ (M v)(z) = \frac{1}{(\det P)^{1/2}} e^{k [ \half z \bar Q P^{-1} z - \half |z|^2 ]}. \]
And for our purpose, we also need
\[ (M^{-1} v) (z) =  \frac{1}{(\det P^*)^{1/2}} e^{k [ - \half z  Q^* (P^*)^{-1} z - \half |z|^2]} \]

Let $\Xi = - i \alpha \pa_z + i \bar \alpha \pa_{\bar z}$, the Hamiltonian vector field for $H = \alpha \bar z + \bar \alpha z$. Then, we can write $P_\Xi v$ as
\[ (P_\Xi v)(z) = \int_\R \beta(- i \alpha t) v dt = \int_\R e^{k [i t z \bar \alpha - |z|^2/2 - |\alpha t|^2/2]} dt \]
It is possible to perform the Gaussian integral, then we get
\[ (P_\Xi v)(z)  = \sqrt{\frac{2\pi}{k |\alpha|^2}} e^{k[- |z|^2/2  - (z \bar \alpha)^2/2 |\alpha|^2)  ]}  \]
We will see, it is better not to evaluate the $dt$ integral first. 

\bpp
\[ \la M^{-1} v, P_\Xi v \ra = \kk^{-m-1/2} (\bar \alpha (P^*)^{-1} \alpha)^{-1/2}  (\det P^*)^{-1/2} \]
\epp
The power of $\kk$ does not matter, since we did not choose a normalized coherent state. The difference between $P$ and $P^*$ with previous result may be due to the difference of time $+T$ or $-T$ trajectories. Since we will sum time $\{ n T \mid n \in \Z\}$ trajectories, the difference does not matter in the end. 
\bpf
\bea
\la M^{-1} v, P_\Xi v \ra &:= & \int_{\C^m} \int_\R \frac{1}{(\det P^*)^{1/2}} e^{k [ -z  Q^* (P^*)^{-1} z - |z|^2/2 ]} \wb{e^{k [i t z \bar \alpha - |z|^2/2 - |\alpha t|^2/2]}} dt d \Vol(z) \\
&=&\int_\R \int_{\C^m} e^{k [ -i t  \bar z \alpha -z  Q^* (P^*)^{-1} z/2 - |z|^2   - |\alpha t|^2/2]} d \Vol(z) dt \\
&=&\int_\R \int_{\C^m} e^{- \half k \Psi(t, z)} d \Vol(z) dt \\
\eea
Let us do the complex Gaussian integral. The phase function is quadratic
\[ \Psi =  \bma t & z^t & \bar z^t \ema   \bma |\alpha|^2 & 0 & -i  \alpha^t \\
0 & Q^* (P^*)^{-1} & I \\ -i  \alpha & I & 0 \ema    \bma t \\ z \\ \bar z \ema \] 
We have
\[ \det   \bma |\alpha|^2 & 0 & -i  \alpha^t \\
0 & Q^* (P^*)^{-1} & I \\ -i  \alpha & I & 0 \ema  =  \det   \bma |\alpha|^2 & i  \alpha^t Q^* (P^*)^{-1}  & -i  \alpha^t \\
0 & 0 & I \\ -i  \alpha & I & 0 \ema \] 
\[= \det  \bma |\alpha|^2 - \alpha^t Q^* (P^*)^{-1} \alpha & i  \alpha^t Q^* (P^*)^{-1}  & -i  \alpha^t \\
0 & 0 & I \\0 & I & 0 \ema  = (-1)^n (|\alpha|^2 - \alpha^t Q^* (P^*)^{-1} \alpha)\]
Again, we use $\xi_H$ is invariant under $M$, to get \eqref{alpha-inv-2}, taking conjugate we have
\[ \bar \alpha^t = \bar \alpha^t P^* - \alpha^t Q^* \]
Hence 
\[ |\alpha|^2 - \alpha^t Q^* (P^*)^{-1} \alpha = |\alpha|^2 - (\bar \alpha^t P^* -   \bar \alpha^t)  (P^*)^{-1} \alpha = \bar \alpha^t (P^*)^{-1} \alpha \]
Thus, doing the complex Gaussian integral, and note that $(-1)^{n/2}$ from determinant Hessian, should cancels with $i^n$ coming from the volume form, we get
\[ \la M^{-1} v, P_\Xi v \ra = \kk^{-m-1/2} (\bar \alpha (P^*)^{-1} \alpha)^{-1/2}  (\det P^*)^{-1/2}  \]
\epf


\end{document}